\documentclass[11pt]{article}
\usepackage{latexsym,amsmath,color,amsthm,amssymb,epsfig,graphicx,mathrsfs}
\usepackage{graphicx}
\usepackage{amssymb}
\usepackage[left=1in,top=1in,right=1in,bottom=1in]{geometry}
\usepackage[linktocpage=true]{hyperref}
\usepackage{setspace}
\usepackage{amssymb, amsmath, amsthm, graphicx,mathrsfs}
\usepackage{caption}

\usepackage{comment}
\def\qed{\hfill\ifhmode\unskip\nobreak\fi\quad\ifmmode\Box\else\hfill$\Box$\fi}
\def\ite#1{\hfill\break${}$\hbox to 50pt {\quad(#1)\hfill}}

\def\cG{{\mathcal G}}
\def\cH{{\mathcal H}}

\newtheorem{thm}{Theorem}[section]
\newtheorem{cor}[thm]{Corollary}
\newtheorem{const}[thm]{Example}

\newtheorem{lem}[thm]{Lemma}

\newtheorem{lemma}[thm]{Lemma}
\newtheorem{conj}[thm]{Conjecture}

\newtheorem{claim}[thm]{Claim}

\usepackage[parfill]{parskip} 

\begingroup
    \makeatletter
    \@for\theoremstyle:=definition,remark,plain\do{%
        \expandafter\g@addto@macro\csname th@\theoremstyle\endcsname{%
            \addtolength\thm@preskip\parskip
            }%
        }
\endgroup

\begin{document}


\title{\vspace{-0.5in} Conditions for a bigraph to be super-cyclic  }

\author{
{{Alexandr Kostochka}}\thanks{
\footnotesize {University of Illinois at Urbana--Champaign, Urbana, IL 61801
 and Sobolev Institute of Mathematics, Novosibirsk 630090, Russia. E-mail: \texttt {kostochk@math.uiuc.edu}.
 Research 
is supported in part by NSF grant  DMS-1600592
and grants  18-01-00353A and 19-01-00682
  of the Russian Foundation for Basic Research.
}}
  \and{Mikhail Lavrov\thanks{Department of Mathematics, University of Illinois at Urbana--Champaign, IL, USA,  mlavrov@illinois.edu.}}
\and{{Ruth Luo}}\thanks{University of Califonia, San Diego, La Jolla, CA 92093, USA and University of Illinois at Urbana--Champaign, Urbana, IL 61801, USA. E-mail: {\tt ruluo@ucsd.edu}.
Research 
is supported in part by NSF grant  DMS-1902808.
}
\and{{Dara Zirlin}}\thanks{University of Illinois at Urbana--Champaign, Urbana, IL 61801, USA. E-mail: {\tt zirlin2@illinois.edu}.
Research 
is supported in part by Arnold O. Beckman Research Award (UIUC) RB20003.}}

\date{\today}

\maketitle

\vspace{-0.3in}

\begin{abstract}
A hypergraph $\cH$ is
  {\em super-pancyclic} if  for each  $A \subseteq V(\cH)$ with $|A| \geq 3$, $\cH$ contains a Berge cycle
with base vertex set $A$. 
We present two natural necessary conditions for a hypergraph to be super-pancyclic, and  show that
in several classes of hypergraphs  these necessary conditions are also sufficient. In particular, they are sufficient
for every  hypergraph $\cH$ with $ \delta(\cH)\geq \max\{|V(\cH)|, \frac{|E(\cH)|+10}{4}\}$.

We also consider  {\em super-cyclic} bipartite graphs: those are $(X,Y)$-bigraphs $G$ such that for each $A \subseteq X$ with $|A| \geq 3$,
$G$ has a cycle $C_A$ such that $V(C_A)\cap X=A$. Such graphs are incidence graphs of super-pancyclic hypergraphs, and our proofs  use the language of such graphs.
 
\medskip\noindent
{\bf{Mathematics Subject Classification:}}  05C35,   05C38,  05C65, 05D05.\\
{\bf{Keywords:}} Longest cycles, degree conditions, pancyclic hypergraphs.
\end{abstract}

\section{Introduction}

\subsection{Longest cycles in bipartite graphs and hypergraphs}

For positive integers $n, m,$ and $\delta$ with $\delta \leq m$, let $\cG(n,m,\delta)$ denote the set of all bipartite graphs with a bipartition $(X, Y)$ such that $|X| = n\geq 2, |Y|=m$ and for every $x \in X$, $d(x) \geq \delta$. In 1981, Jackson~\cite{jackson} proved that if  $ \delta\geq \max\{n,\frac{m+2}{2}\}$, then every graph $G\in \cG(n,m,\delta)$ contains a cycle of length $2n$, i.e., a cycle that contains all vertices of $X$.
This result is sharp. 
 Jackson also conjectured that if  $G \in \cG(n,m,\delta)$ is 2-connected, then the lower bound on $\delta$ can be weakened. 

\begin{conj}[Jackson~\cite{jackson}]\label{jacksonconj} Let $m,n,\delta$ be integers. If $\delta\geq \max\{n, \frac{m+5}{3}\}$,
then every $2$-connected graph $G \in \cG(n,m,\delta)$ contains a cycle of length $2n$.\end{conj}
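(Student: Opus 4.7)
I would argue by contradiction using the extremal/rotation method. Suppose $G \in \cG(n,m,\delta)$ is $2$-connected with $\delta \geq \max\{n, (m+5)/3\}$ but contains no cycle of length $2n$. Choose a cycle $C$ maximizing $|V(C) \cap X|$ and, subject to that, of maximum total length; set $X_C = V(C) \cap X$, $Y_C = V(C) \cap Y$, and $k = |X_C| < n$. Fix $x_0 \in X \setminus V(C)$. Since $x_0 \in X$, all of its neighbors lie in $Y$, and $d(x_0) \geq \delta \geq n$ forces many of them to sit in $Y_C$. By Menger's theorem applied to the $2$-connected graph $G$, there exist two internally vertex-disjoint paths $P_1, P_2$ from $x_0$ to $V(C)$; since $G$ is bipartite and every neighbor of $x_0$ lies in $Y$, we may arrange that they land at distinct vertices $y_1, y_2 \in Y_C$.

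Now $y_1, y_2$ split $C$ into two arcs $C_1$ and $C_2$. I would first attempt the direct construction $C^\ast := P_1 \cup C_i \cup P_2$ for $i \in \{1,2\}$ and try to show that $C^\ast$ covers strictly more $X$-vertices than $C$, contradicting the choice of $C$. When the direct attempt fails, the plan is a P\'osa-style rotation on $C$: using the large set $N(x_0) \cap Y_C$ (forced by $\delta \geq n$), iteratively swap an edge of $C$ incident to a rotation endpoint for an edge from $x_0$, producing a growing collection of ``alternate endpoints'' for the extended path $P_1 \cup \{x_0\} \cup P_2$. The degree bound $\delta \geq (m+5)/3$ should enter via a double-count: each vertex of $X_C$ sends $\geq \delta$ edges into $Y$, edges to $Y \setminus Y_C$ are limited by $m-k$, and hence $|Y_C|$ is bounded below; eventually the rotation endpoint set becomes so dense in $C$ that an $X$-longer cycle through $V(C) \cup \{x_0\}$ emerges.

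\textbf{Main obstacle.} To my knowledge this conjecture is still open in general, and I expect the core difficulty to be configurations where $P_1, P_2$ use several vertices from $X \setminus V(C)$ or $Y \setminus V(C)$: rotations then have to interact with those internal vertices while still yielding a strictly $X$-longer cycle. The sharp constant $1/3$ in $(m+5)/3$, as opposed to Jackson's $1/2$ in his unconditional theorem, signals that $2$-connectivity must be exploited twice -- once to attach $x_0$ to $C$ via two internally disjoint paths, and again inside the rotation argument to prevent the rerouted paths from collapsing -- without any slack in the degree count. Balancing these two uses of $2$-connectivity against the extremal construction (clusters of $K_{\delta+1,\delta}$-type blocks glued along $2$-cuts, which is where the bound $(m+5)/3$ is tight) is, I expect, exactly where progress on the conjecture has proved difficult.
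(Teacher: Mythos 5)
The statement you are trying to prove is presented in this paper as Conjecture~\ref{jacksonconj}, but the paper explicitly records (in the sentence immediately following it) that the conjecture was proved in~\cite{KLZ}; the present paper does not reprove it. Your closing remark that ``to my knowledge this conjecture is still open in general'' is therefore wrong as a matter of the literature, and more importantly your submission is not a proof at all: it is a plan whose key step you yourself label the ``main obstacle'' and leave unexecuted.

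Concretely, the gap is exactly where you say it is. Choosing a cycle $C$ maximizing $|V(C)\cap X|$ and attaching an outside vertex $x_0$ by a fan is the standard opening, and the paper's own machinery (for the related Theorem~\ref{mainpan3}) does the same: it fixes a cycle $C$ based on $X-\{x\}$ and an $x,C$-fan $F$, then maximizes $t=|V(C)\cap V(F)|$. But the content of the argument is not the rotation sketch you gesture at; it is a collection of structural lemmas controlling where the fan feet can sit, which pairs of vertices can share neighbors off $C$, and how many crossings are possible (Claims~\ref{no-con-1}--\ref{no-con-3} and Lemmas~\ref{T+}--\ref{cros111} here, with analogues in~\cite{KLZ}). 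Your double-count ``each vertex of $X_C$ sends $\geq\delta$ edges into $Y$, edges to $Y\setminus Y_C$ are limited by $m-k$'' is not valid as stated, because several vertices of $X_C$ can legitimately share a neighbor in $Y\setminus Y_C$; a count of the form $|Y|\geq |Y_C| + \sum |N(u)\setminus V(C)|$ only goes through after one has proved disjointness of the off-cycle neighborhoods for a carefully chosen set of vertices (that is precisely what Lemmas~\ref{cross0}, \ref{cross1}(iii), and~\ref{cros11} establish). Without those disjointness and crossing-bound lemmas, neither the rotation nor the counting closes, so the proposal as written does not prove the statement.
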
 

Recently, the conjecture was proved in~\cite{KLZ}. The restriction $\delta\geq \frac{m+5}{3}$ cannot be weakened any further because of the following example:

\begin{const}\label{con5}
Let $n_1 \geq n_2 \geq n_3\geq 1$ be such that $n_1 + n_2 + n_3 = n$.
 Let $G_3(n_1, n_2, n_3) \in \cG(n,3\delta-4, \delta)$ be the bipartite graph obtained from $K_{\delta-2, n_1} \cup K_{\delta-2, n_2} \cup K_{\delta-2, n_3}$  by adding two vertices $a$ and $b$ that are both adjacent to every vertex in the parts of size $n_1, n_2$, and $n_3$. Then a longest cycle in $G_3(n_1, n_2, n_3)$ has length $2(n_1+n_2) \leq 2(n-1)$.  
 \end{const}
 
Very recently~\cite{KLMZ}, the bound was refined for $3$-connected graphs in
  $G \in \cG(n,m,\delta)$.

\begin{thm}[\cite{KLMZ}]\label{jackson6} 
Let $m,n,\delta$ be integers. If $\delta\geq \max\{n, \frac{m+10}{4}\}$,
then every $3$-connected graph $G \in \cG(n,m,\delta)$ contains a cycle of length $2n$.
\end{thm}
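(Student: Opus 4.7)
The plan is to argue by contradiction: suppose $G \in \cG(n,m,\delta)$ is 3-connected with $\delta\geq \max\{n,(m+10)/4\}$ but $G$ contains no cycle of length $2n$. Fix a cycle $C$ that first maximizes $|V(C)\cap X|$ and, subject to this, maximizes its length; by assumption some $x_0 \in X \setminus V(C)$ exists. The overarching strategy mirrors the one used in \cite{KLZ} for the 2-connected case with bound $(m+5)/3$: extract a structural ``block partition'' of $V(C)\cap Y$ from the non-extendability of $C$, then contradict the degree hypothesis by double-counting the edges from $X$ into $Y$.

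First, I would apply Menger's theorem between $x_0$ and $V(C)$ to obtain, via 3-connectivity, three internally disjoint $x_0$--$C$ paths whose endpoints on $C$ are three distinct vertices $y_1,y_2,y_3 \in Y$ (they lie in $Y$ since $G$ is bipartite and $x_0 \in X$). These split $C$ into three arcs $A_1,A_2,A_3$. The choice of $C$ then forbids various short-cutting configurations: if a vertex $x \in X \cap V(A_i)$ had a neighbor on $A_j$ ($j \neq i$) in certain positions relative to $y_i,y_j$, one could reroute through $x_0$ together with two of the three Menger paths to produce either a longer cycle or one with more $X$-vertices. Iterating these ``forbidden jump'' arguments should confine the neighborhood of each $X$-vertex on $A_i$ essentially to $V(A_i)\cup \{y_1,y_2,y_3\}$.

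Once this confinement is established, the bound $d(x)\geq \delta$ forces each arc $A_i$ to carry at least $\delta-3$ vertices of $Y$ ``private'' to $A_i$, while $x_0$ (together with any further $X$-vertices off $C$ produced along the Menger paths) accounts for an additional block of $\delta-3$ private vertices of $Y$. Combined with the three ``hub'' vertices $y_1,y_2,y_3$, this yields $|Y|\geq 4(\delta-3)+3 = 4\delta-9$, i.e.\ $\delta\leq (m+9)/4$, contradicting the hypothesis $\delta\geq (m+10)/4$. The count matches the tightness of the natural four-block analogue of Example~\ref{con5}, which shows why the constant $10$ in $(m+10)/4$ cannot be reduced.

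The hard part will be the case analysis underlying the block structure. One has to handle separately: (a) whether the three Menger paths are single edges or longer paths passing through $X\setminus V(C)$, since the latter produce additional off-cycle $X$-vertices that must be incorporated into the fourth block cleanly; (b) arcs $A_i$ that are so short that their private $Y$-neighborhood falls below $\delta-3$, which requires merging adjacent blocks or exploiting extra edges from $x_0$; and (c) overlaps where $N(x_0)$ coincides with neighborhoods of $X$-vertices on $C$, seemingly enabling rotations that are in fact blocked by the $(X,Y)$-bipartition or by the extremal choice of $C$. A careful tertiary optimization of $C$ (for instance, minimizing the total ``gap'' between $N(x_0)$ and $V(C)$, in the spirit of the Jackson-type arguments) is likely needed to make the rerouting lemmas uniform.
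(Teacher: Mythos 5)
This theorem is not actually proved in the present paper: it is quoted from~\cite{KLMZ}, and the authors only say that ``the ideas of the proof of Theorem~\ref{jackson6}'' are reused in Section~\ref{sec:main-proof} to prove the stronger Theorem~\ref{mainpan3}. So the fair comparison is against that proof.

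Your outline has a concrete error at the very first step. You assert that the three Menger $x_0$--$C$ paths must land on $C$ at vertices of $Y$ ``since $G$ is bipartite and $x_0 \in X$.'' Bipartiteness only fixes parity along a path; a fan path from $x_0$ may first hit $C$ in either part depending on its length, so the landing set can contain $X$-vertices. This is a real complication, not a technicality: the paper devotes Lemma~\ref{tX'} precisely to controlling $|T \cap X|$ for the fan's landing set $T$, and the subsequent structure lemmas carefully work with the ``successors'' $X^+(T)$ rather than with hub $Y$-vertices. Your sketch silently assumes the easy sub-case.

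Beyond that, the central mechanism you propose — confining $N(x)$ for each $x$ on arc $A_i$ to $V(A_i) \cup \{y_1,y_2,y_3\}$ and extracting four disjoint $(\delta-3)$-blocks of ``private'' $Y$-vertices — is not what the paper does, and I don't see how to make it go through as stated. Non-extendability of $C$ rules out only specific cross-arc edges; nothing stops a vertex on one arc from having many neighbors on another. Your final count $|Y| \geq 4(\delta-3)+3 = 4\delta-9$ does hit the right target (the same bound appears in the paper's computations, and it matches the $3$-connected four-block analogue of Construction~\ref{con5}), but as written there is no argument producing four disjoint blocks. Indeed, the ``hard part'' you flag — short arcs, merged blocks, overlapping neighborhoods — is exactly where an arc-confinement argument tends to collapse. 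The paper sidesteps this entirely: it fixes a triple $(C,x,F)$ with $F$ an $x,C$-fan, maximizes $t=|V(F)\cap V(C)|$, and then uses a \emph{crossing} argument. Lemmas~\ref{cross0}--\ref{cros111} establish that the vertices of $X^+(T)\cup\{x\}$ pairwise share no neighbors off $C$ and that vertices in $X^+(T)$ pairwise cross at most once on $C$, and Lemma~\ref{cros12} converts a bound on crossings into a degree-sum bound on $C$. The contradiction $|Y|\geq 4\delta-9$ then comes from adding the off-$C$ degrees (disjoint by the no-shared-neighbor lemmas) to the on-$C$ degrees (bounded by the crossing lemma). This is a genuinely different decomposition of the degree count than the arc-block partition you propose, and it is what makes the delicate short-arc cases disappear.
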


A construction very similar to Construction~\ref{con5} shows that the bound $\frac{m+10}{4}$ is sharp.

The results can be translated into the language of hypergraphs and hamiltonian Berge cycles.



Recall that a {\em hypergraph} $\cH$ is a set of vertices $V(\cH)$ and a set of edges $E(\cH)$ such that each edge is a subset of $V(\cH)$. 
 We consider hypergraphs with  edges of any size. The degree, $d(v)$, of a vertex $v$  is the number of edges that contain $v$.

A {\em Berge cycle} of length $\ell$ in a hypergraph is a set of $\ell$ distinct vertices $\{v_1, \ldots, v_\ell\}$ and $\ell$ distinct edges $\{e_1, \ldots, e_\ell\}$ such that for every $i\in [\ell]$, $v_i, v_{i+1} \in e_i$ (indices are taken modulo $\ell$). The vertices $v_1, \ldots, v_\ell$ are  the {\em base vertices} of the  cycle.

Naturally, a {\em hamiltonian Berge cycle} in  a hypergraph $\cH$ is a Berge cycle whose set of base vertices is $V(\cH)$.

Let $\cH = (V(\cH), E(\cH))$ be a hypergraph. The {\em incidence   graph} of $\cH$ is the bipartite graph $I(\cH)$ with parts $(X, Y)$ where $X = V(\cH)$, $Y= E(\cH)$ such that for $e \in Y, v \in X,$ $ev\in E(I(\cH))$ if and only if the vertex $v$ is contained in the edge $e$ in $\cH$. 

If $\cH$ has $n$ vertices, $m$ edges and minimum degree at least $\delta$, then
 $I(\cH) \in \cG(n,m,\delta)$. There is a simple relation between the cycle lengths in a hypergraph $\cH$ and its incidence graph $I(\cH)$:
 If $\{v_1, \ldots, v_\ell\}$ and $\{e_1, \ldots, e_\ell\}$ form a Berge cycle of length $\ell$ in $\cH$, then $v_1 e_1 \ldots v_\ell e_\ell v_1$ is a cycle of length $2\ell$ in $I(\cH)$, and vice versa.

\subsection{Super-pancyclic hypergraphs and super-cyclic bigraphs}
Recall that an $n$-vertex graph is  {\em pancyclic} if it contains a cycle of length $\ell$ for every $3 \leq \ell \leq n$. 
There are a number of interesting results on pancyclic graphs, see e.g. survey~\cite{pansurvey}.
A similar notion  for hypergraphs and a strengthening of it were recently considered in~\cite{KLZ}.

A hypergraph $\cH$ is {\em pancyclic} if it contains a Berge cycle of length $\ell$ for every $\ell \geq 3$. Furthermore,  $\cH$ is {\em super-pancyclic} if for every   $A\subseteq V(\cH)$ with $ |A|\geq 3$, $\cH$ has  a Berge cycle whose set of base vertices is  $A$.

While the notion of super-pancyclic graphs is useless, since only complete graphs have this property, the notion for general hypergraphs is nontrivial.
For example, Jackson's proof~\cite{jackson}  that for $ \delta\geq \max\{n,\frac{m+2}{2}\}$, each graph $G\in \cG(n,m,\delta)$  has a cycle of length $2n$ 
yields a stronger statement. In the language of hypergraphs, it implies the following.

\begin{thm}  If $\delta\geq \max\{n, \frac{m+2}{2}\}$, then every   $n$-vertex hypergraph with $m$ edges and minimum degree  at least $\delta$ is super-pancyclic.
\end{thm}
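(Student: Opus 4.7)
The plan is to reduce the statement to a direct application of Jackson's 1981 bipartite theorem recalled at the start of the introduction.

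Given any $A \subseteq V(\cH)$ with $|A| \geq 3$, I would form the bipartite subgraph $G_A$ of the incidence graph $I(\cH)$ obtained by deleting the vertices of $V(\cH) \setminus A$ from the $X$-side while keeping the entire $Y$-side $E(\cH)$. Formally, $G_A$ has bipartition $(A, E(\cH))$, with an edge $xe$ precisely when $x \in e$ in $\cH$.

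The key observation is that since nothing is deleted from the $Y$-side, every $x \in A$ retains all of its $\cH$-incidences in $G_A$, so $d_{G_A}(x) = d_{\cH}(x) \geq \delta$. Setting $n' := |A|$ and $m' := m$, the hypotheses $\delta \geq n \geq n'$ and $\delta \geq \frac{m+2}{2} = \frac{m'+2}{2}$ together give $\delta \geq \max\{n', \frac{m'+2}{2}\}$, so $G_A \in \cG(n', m', \delta)$. Jackson's theorem then produces a cycle $C$ of length $2n' = 2|A|$ in $G_A$; such a cycle necessarily alternates between $A$ and $E(\cH)$, using every vertex of $A$ together with $|A|$ distinct edges of $\cH$. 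By the correspondence (described in the previous subsection) between cycles in an incidence graph and Berge cycles in the hypergraph, $C$ translates back to a Berge cycle in $\cH$ whose base vertex set is exactly $A$.

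Since $A$ was arbitrary of size at least $3$, this shows $\cH$ is super-pancyclic. There is essentially no obstacle beyond recognizing that passing from $I(\cH)$ to $G_A$ preserves the minimum-degree condition on $A$ (because no hyperedge of $\cH$ is discarded, only vertices outside $A$ are), so the super-pancyclic conclusion follows uniformly from a single invocation of Jackson's theorem applied to $G_A$ for each choice of $A$.
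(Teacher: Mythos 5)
Your proposal is correct. The paper does not spell out a proof here --- it only remarks that ``Jackson's proof yields a stronger statement'' and then states the theorem, implicitly suggesting one should re-examine Jackson's argument to extract the super-pancyclic conclusion. Your route is cleaner: it treats Jackson's theorem as a black box. The crucial observation, that in $G_A$ (obtained by deleting $V(\cH)\setminus A$ from the $X$-side of $I(\cH)$ but keeping all of $E(\cH)$ on the $Y$-side) every $x\in A$ retains its full degree $d_{\cH}(x)\ge\delta$, is exactly right; combined with $|A|\le n\le\delta$ and the unchanged $Y$-side size $m$, this puts $G_A\in\cG(|A|,m,\delta)$ with $\delta\ge\max\{|A|,\tfrac{m+2}{2}\}$, so Jackson's theorem hands you a $2|A|$-cycle, which is precisely a Berge cycle with base set $A$. (One small point worth having checked explicitly: membership in $\cG$ also requires $\delta\le m$, which holds since $\delta(\cH)\ge\delta$ forces $m\ge\delta$.) In short, your argument shows the hypergraph statement is a formal corollary of Jackson's theorem statement alone, whereas the paper's phrasing suggests one must look inside Jackson's proof; both lead to the same place, but your reduction is the more self-contained of the two.
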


It is interesting to find broader conditions guaranteeing that a hypergraph is super-pancyclic. The notion of super-pancyclicity translates into the language of bipartite graphs as follows.

By an $(X,Y)$-bigraph we mean a bipartite graph $G$ with a specified ordered bipartition $(X,Y)$. An $(X,Y)$-bigraph is \emph{super-cyclic} if for every $X' \subseteq X$ with $|X'| \geq 3$, $G$ has a cycle $C$  with $V(C) \cap X = X'$; we say that {\em $C$ is based on  $X'$}.

To state necessary conditions for an $(X,Y)$-bigraph to be super-cyclic, we need a new notion. For $A\subseteq X$, the {\em super-neighborhood} $\widehat{N}(A)$ is the set $\{y\in Y\,:\, |N(y)\cap A|\geq 2\}$.

If $G$ is a super-cyclic $(X,Y)$-bigraph, $A \subseteq X$, and $C$ is a cycle based on $A$, let $B = V(C) \cap Y$. Then $B \subseteq \widehat{N}(A)$ and $G[A \cup B]$ is $2$-connected. Since by the expansion lemma, adding a vertex of degree at least $2$ to a 2-connected graph keeps the graph $2$-connected, we conclude that every super-cyclic bipartite graph satisfies:

\begin{equation}
	\label{lll}
	\text{For each $A \subseteq X$ with $|A| \ge 3$: }
	\begin{cases}
		|\widehat{N}(A)|\geq |A|, \text{ and} \\
		G[A\cup \widehat{N}(A)] \text{ is $2$-connected.}
	\end{cases}
\end{equation}
 
 We  conjecture that  these necessary  conditions for a bigraph to be super-cyclic are also sufficient.
 
\begin{conj}\label{mcon}
If $G$ is an $(X,Y)$-bigraph satisfying~\eqref{lll}, then $G$ is super-cyclic.
\end{conj}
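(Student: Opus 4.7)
Fix $A\subseteq X$ with $|A|\geq 3$ and work inside $G^* := G[A\cup \widehat{N}(A)]$, which by hypothesis is $2$-connected and satisfies $|\widehat{N}(A)|\geq |A|$. Since $G^*$ is bipartite with $X$-side exactly $A$, every cycle $C$ in $G^*$ has $V(C)\cap X\subseteq A$, so producing a cycle based on $A$ is the same as producing a cycle of length $2|A|$ in $G^*$ (a ``Hamilton cycle on the $A$-side''). My plan is to work entirely inside $G^*$ via an extremal argument combined with a rerouting step through each missing vertex of $A$.

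\textbf{Main argument.} Pick a cycle $C$ in $G^*$ maximizing $|V(C)\cap A|$, and set $A_0 := V(C)\cap A$. First I would check $|A_0|\geq 3$: by $2$-connectivity any two vertices of $A$ lie on a common cycle, and a short ear-decomposition argument using $|\widehat{N}(A)|\geq 3$ lets one promote this to a cycle through three prescribed $A$-vertices. Assume for contradiction $A_0\subsetneq A$ and pick $a\in A\setminus A_0$. Applying the Fan Lemma in $G^*$ to $a$ and $V(C)$ yields two paths $P_1,P_2$ from $a$ to $C$, internally vertex-disjoint from each other and from $C$, with distinct terminal vertices $u_1,u_2\in V(C)$. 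If $Q_1,Q_2$ are the two arcs of $C$ from $u_1$ to $u_2$, form the candidate cycles $D_i := Q_i\cup P_1\cup P_2$. Each $D_i$ acquires $a$ together with all internal $X$-vertices of $P_1\cup P_2$ (all of which lie in $A$) while losing the interior $X$-vertices of $Q_{3-i}$. If either $|V(D_i)\cap A|>|A_0|$ we contradict maximality; otherwise both arcs $Q_1,Q_2$ must contain $A_0$-vertices that the detour through $a$ genuinely cannot recover.

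\textbf{Pushing to a contradiction.} At this point I would iterate the construction over all $a\in A\setminus A_0$ and all valid fan-choices, biasing the internal $Y$-vertices of $P_1,P_2$ to avoid $V(C)$ whenever possible — this bias is made legal by the slack $|\widehat{N}(A)\setminus V(C)|\geq |A|-|A_0|>0$. Each attempt either enlarges $A_0$ or imposes a structural constraint linking $N(a)\cap\widehat{N}(A)$ to the cyclic order of $A_0$ on $C$. A double-counting argument over pairs $(a,y)$ with $a\in A\setminus A_0$ and $y\in \widehat{N}(A)\setminus V(C)$, together with the $2$-connectivity of $G^*$, should then force a contradiction with either $|\widehat{N}(A)|\geq |A|$ or the structure of $N(y)\cap A$ for such $y$.

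\textbf{Main obstacle.} The hardest configuration is when both paths $P_1,P_2$ have length $2$ (so $u_1,u_2\in N(a)\cap \widehat{N}(A)\cap V(C)$), both arcs $Q_1,Q_2$ pass through an $A_0$-vertex that is truly lost under any length-$2$ detour through $a$, and every ``free'' $y\in \widehat{N}(A)\setminus V(C)$ satisfies $N(y)\cap A\subseteq A_0$. Here $2$-connectivity supplies no third internally disjoint path locally, and the bound $|\widehat{N}(A)|\geq |A|$ has to be exploited globally rather than vertex-by-vertex. Identifying the right global invariant — presumably a refined count tracking which $A_0$-vertices can be ``shifted'' around $C$ by the free $Y$-vertices, rather than a naive count of those free $Y$-vertices — is where I expect the argument to resist, and it is the reason that, as indicated in the excerpt, the authors currently establish the conjecture only under extra degree or connectivity hypotheses of the type in Theorem~\ref{jackson6}.
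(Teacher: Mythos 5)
The statement you have attempted is labeled Conjecture~\ref{mcon} in the paper and is explicitly left open: the authors prove only partial results, namely Theorem~\ref{Xk} (condition~\eqref{lll} implies $k$-cyclicity for $k\le 6$) and Theorem~\ref{mainpan3} (super-cyclicity under the additional degree hypothesis $\delta \ge \max\{n,\tfrac{m+10}{4}\}$). There is no paper proof to compare against, and your sketch correctly stops short of claiming one.

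Your setup closely parallels the machinery the authors use for those partial results. They pass to a ``critical'' subgraph, in which a cycle $C$ based on $X-\{x_0\}$ exists for every $x_0\in X$, apply the fan lemma from $x_0$ to $C$, and derive common-neighbor constraints (Claims~\ref{no-con-1}--\ref{no-con-3} and Lemma~\ref{two-neighbors}) when no rerouting extends $C$. Your $(C,A_0,a)$-maximization and rerouting step is essentially the same engine, though the paper's criticality reduction guarantees $|A_0|=|A|-1$ outright rather than leaving it to a maximization, and your ``promote two prescribed $A$-vertices to three'' step is not a one-line ear argument but is exactly Claim~\ref{x3}, which already needs care. Where the approaches separate is precisely at the obstacle you flag: when both fan paths from $a$ have length $2$ and every ``free'' $y\in\widehat{N}(A)\setminus V(C)$ satisfies $N(y)\cap A\subseteq A_0$, the slack $|\widehat{N}(A)|\ge|A|$ must be exploited globally, and no one knows how to do that in general. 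Theorem~\ref{Xk} resolves it by exhaustive analysis for $|A|\le 6$; Theorem~\ref{mainpan3} resolves it by converting the local fan structure into a global edge count (Lemmas~\ref{smallsum} and~\ref{cros111} and the counting at the end of Section~\ref{sec:main-proof}), but that count only closes because of the added minimum-degree hypothesis. Your proposed double-count over $(a,y)$ pairs has the right shape for the latter argument; without a degree lower bound the inequality simply does not bottom out. So the gap you identify is genuine, and it is the same gap that leaves Conjecture~\ref{mcon} open in the paper.
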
 

Jaehoon Kim~\cite{jk} observed that to check condition~\eqref{lll}, it is sufficient to verify that $G[A \cup \widehat{N}(A)]$ is 2-connected only when $|A|=3$, though $|\widehat{N}(A)| \geq |A|$ still needs to be checked for all $A$. When $|A| > 3$, if $G[A \cup \widehat{N}(A)]$ is not $2$-connected, there is a subset $A' \subseteq A$ with $|A'| =3$ for which $G[A' \cup \widehat{N}(A')]$ is also not $2$-connected.
 
To give partial support for Conjecture~\ref{mcon}, let us somewhat refine the notion of super-cyclic bigraphs.
 
For an integer $k\geq 3$,  a bipartite graph $G$ with partition $(X,Y)$ is {\em $k$-cyclic} if for every $X' \subseteq X$ with $|X'| = k$, $G$ has a cycle $C$ that is based on  $X'$. If $G$ is $k$-cyclic for all $3 \le k \le |X|$, then it is super-cyclic.

In a series of claims, we prove the following.

\begin{thm}\label{Xk}  If $G$ is an $(X,Y)$-bigraph satisfying~\eqref{lll}, then $G$ is $k$-cyclic for $k = 3, 4, 5, 6$.
\end{thm}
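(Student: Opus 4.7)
The plan is to handle $k = 3$ directly from $2$-connectivity and then push to $k = 4, 5, 6$ by an insertion argument, likely organized as separate claims for each~$k$.

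For $k = 3$, write $A = \{a_1, a_2, a_3\}$ and $B = \widehat{N}(A)$. A cycle based on $A$ is precisely a choice of three pairwise distinct $y_{12}, y_{23}, y_{13} \in B$ such that $y_{ij}$ is adjacent to both $a_i$ and $a_j$. First I show that each pair $\{a_i, a_j\}$ has a common neighbor in $B$: by $2$-connectivity of $G[A \cup B]$, removing the third vertex $a_k$ leaves a connected graph containing $a_i, a_j$, and any $a_i$-$a_j$ path in the residual bipartite graph on $(A \setminus \{a_k\}) \cup B$ must alternate between $A$ and $B$ while avoiding $a_k$, forcing length~$2$. Hall's theorem then furnishes the required system of distinct representatives: the only nontrivial subfamily to check has size two, and a Hall violation there would force some $a_i$ to have only one neighbor in $B$, contradicting $2$-connectivity; the full family's Hall condition reduces to $|B| \geq 3$.

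For $k \in \{4, 5, 6\}$ I induct on $k$. Fix $A \subseteq X$ with $|A| = k$ and $a^* \in A$; condition~\eqref{lll} is inherited by $A_0 := A \setminus \{a^*\}$, so the $(k-1)$-cyclic case yields a cycle $C_0$ based on $A_0$ with $Y_0 := V(C_0) \cap Y$ of size $k-1$. Since $|\widehat{N}(A)| \geq k > |Y_0|$, the set $\widehat{N}(A) \setminus Y_0$ is nonempty. The core move is the standard insertion: for a consecutive subpath $a, y, a'$ of $C_0$ with $a, a' \in A_0$ and $y \in Y_0$, replace it by $a, y_1, a^*, y_2, a'$, where $y_1 \sim a, a^*$ and $y_2 \sim a^*, a'$ are distinct elements of $(\widehat{N}(A) \setminus Y_0) \cup \{y\}$. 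If such an insertion is available for some consecutive pair, we are done. Otherwise one first modifies $C_0$ itself: swap an on-cycle $y_i \in Y_0$ for some $y_i' \in \widehat{N}(A_0) \setminus Y_0$ covering the same pair of $A_0$-vertices, thereby liberating a neighbor of $a^*$. The two legs of the new detour are supplied by two internally disjoint $a^*$-to-$C_0$ paths in $G[(A_0 \cup \{a^*\}) \cup \widehat{N}(A_0 \cup \{a^*\})]$, which exist by $2$-connectivity of that subgraph.

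The hard part will be jointly choosing distinct $y_1, y_2$ together with the right edge of $C_0$ to break. This reduces to a Hall-type feasibility problem on an auxiliary bipartite graph between the $k$ consecutive pairs of $A$ in the target cyclic order and the vertices of $\widehat{N}(A)$. I expect any failure of Hall to be traceable by case analysis to a contradiction with either $|\widehat{N}(A)| \geq k$ or the $2$-connectivity of $G[A \cup \widehat{N}(A)]$, but the case analysis grows with $k$, and $k = 6$ will be the most intricate, splitting on the local structure of $\widehat{N}(A)$ at $a^*$ (how many neighbors of $a^*$ lie on $C_0$, how the $y \in \widehat{N}(A) \setminus Y_0$ distribute over pairs in $A_0$, and so on). This growth in case complexity is likely the reason the theorem stops at $k = 6$ rather than covering all $k \leq |X|$.
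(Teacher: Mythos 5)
Your $k=3$ argument is correct and takes a genuinely different route than the paper's. The paper proves Claim~\ref{x3} by passing to a vertex-minimal counterexample (so the graph is critical), taking a longest cycle, and using a fan from the missing vertex plus a short case check. You instead work directly in $G[A\cup\widehat{N}(A)]$ and build the desired $6$-cycle as a system of distinct representatives for the three sets $N_B(a_i)\cap N_B(a_j)$: each is nonempty by $2$-connectivity, the size-$2$ Hall condition holds because every vertex of $B=\widehat{N}(A)$ sees at least two of $a_1,a_2,a_3$ (so a violation would force $d_B(a_i)=1$ for some $i$), and the size-$3$ condition is exactly $|B|\geq 3$. This is a valid and rather transparent proof of the $k=3$ case.

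For $k=4,5,6$, however, what you have is a plan, not a proof, and the gap is precisely where the work lies. The high-level move --- take a cycle $C_0$ on $A\setminus\{a^*\}$ and insert $a^*$ by replacing an edge $a\,y\,a'$ with a detour $a\,y_1\,a^*\,y_2\,a'$, possibly after first swapping an on-cycle $Y$-vertex for an off-cycle one --- is indeed the skeleton of the paper's argument. But you then write ``I expect any failure of Hall to be traceable by case analysis to a contradiction,'' which is an optimistic placeholder rather than an argument. Concretely: the off-cycle neighbors of $a^*$ in $\widehat{N}(A)$ may all be pinned to a single pair of $A_0$-vertices (so distinct $y_1,y_2$ are unavailable); $a^*$ may have no off-cycle neighbor in $\widehat{N}(A)$ at all; and the swap move may fail because every $y\in\widehat{N}(A)\setminus Y_0$ covers a non-consecutive pair of $C_0$. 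The paper overcomes exactly these failure modes by first establishing structural lemmas about a critical, \emph{saturated}, \emph{$Y$-minimal} counterexample: Lemma~\ref{two-neighbors} (the inserted vertex has at least two neighbors on $C_0$), derived from Claims~\ref{no-con-1}--\ref{no-con-3}; Lemmas~\ref{5c} and~\ref{-2} (no $y$ has degree $|X|-1$ or $|X|-2$), which rely on the saturation trick of adding an edge and tracing the resulting Hamiltonian path; Lemma~\ref{y2} (no two degree-$2$ vertices of $Y$ share a neighborhood); Lemma~\ref{C-2}; and, for $k=6$, Lemma~\ref{degree-4} (some $x\in X$ has degree at least $4$), whose proof itself needs Lemma~\ref{-2}. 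None of this machinery appears in your proposal, and the paper's $k=6$ case is essentially inaccessible without it --- the degree bookkeeping and case split in Claim~\ref{x6} rest entirely on those preparatory lemmas. As written, the proposal does not prove the theorem for $k\geq 4$; closing the gap would require discovering (or reinventing) the saturation and $Y$-minimality arguments, not merely running a Hall-type feasibility check.
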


Another result supporting Conjecture~\ref{mcon} was proved in~\cite{KLZ} (in slightly different terms). 

\begin{thm}[\cite{KLZ}]\label{mainpan} Let $\delta\geq \max\{n, \frac{m+5}{3}\}$. If $G \in \cG(n,m,\delta)$ satisfies~\eqref{lll},
then $G$ is super-cyclic.
\end{thm}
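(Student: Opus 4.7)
The plan is to reduce, for each $k \in \{3, 4, 5, 6\}$ and each $X' \subseteq X$ with $|X'| = k$, the existence of a cycle based on $X'$ to a Hamilton-cycle question on an auxiliary graph. Write $A = X'$, $B = \widehat{N}(A)$, and $G' = G[A \cup B]$; by~\eqref{lll}, $G'$ is $2$-connected and $|B| \ge k$. Introduce the simple graph $H$ on vertex set $A$ with $a_i a_j \in E(H)$ iff $N(a_i) \cap N(a_j) \ne \emptyset$. A cycle based on $A$ in $G$ corresponds precisely to a Hamilton cycle of $H$ together with a system of distinct representatives (SDR) for the common-neighbor sets of its consecutive pairs. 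Since any walk in $G'$ from $a_i$ to $a_j$ avoiding $a_l$ projects (by contracting $B$-vertices) to a walk in $H$ avoiding $a_l$, the graph $H$ inherits $2$-connectivity from $G'$.

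For $k = 3$, $H$ must be $K_3$. Hall's condition for the three pair-sets reduces to: each is nonempty (given by $H = K_3$); any two have union of size at least $2$ (each $a_i$ has $\ge 2$ neighbors in $B$ by $2$-connectedness of $G'$, and these neighbors lie in the pair-sets of the two edges of $H$ incident to $a_i$); and all three have union of size $\ge 3$ (equal to $|B| \ge 3$). Hence an SDR exists, yielding a $6$-cycle $a_1 y_{12} a_2 y_{23} a_3 y_{13} a_1$. For $k = 4$, every $2$-connected graph on four vertices has a Hamilton cycle; fix one in $H$ and attempt the SDR. If Hall fails, a blocking set $S$ of consecutive pairs (necessarily with $|S| \le 3$) has its $B$-witnesses concentrated in fewer than $|S|$ vertices. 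I would then apply~\eqref{lll} to a suitable $3$-subset of $A$ to enlarge either the witness pool or the edge set of $H$, which lets me either satisfy Hall or switch to a different Hamilton cycle of $H$.

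Cases $k = 5$ and $k = 6$ follow the same template but with an added preliminary step: the $2$-connectivity of $H$ no longer implies that $H$ itself is Hamiltonian. The only $2$-connected non-Hamiltonian graph on $5$ vertices is $K_{2,3}$, and on $6$ vertices the exceptional class is still small (for instance, $K_{2,4}$ and a few graphs with a cut-pair). In each exceptional case I would exhibit a $3$- or $4$-subset $A' \subseteq A$ for which the alleged structure of $H$ forces a vertex of degree $<2$ in $G[A' \cup \widehat{N}(A')]$, contradicting the $2$-connectivity in~\eqref{lll}. Once $H$ is shown to contain a Hamilton cycle, the $k = 4$ argument adapts, with additional subcases for the SDR step. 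The main obstacle throughout is that $2$-connectivity of $H$ controls neither the existence of a Hamilton cycle of $H$ (for $k \ge 5$) nor the SDR condition on the chosen cycle; both have to be handled by invoking~\eqref{lll} on cleverly chosen proper subsets of $A$, and the bookkeeping grows rapidly with $k$, which is presumably why the theorem is stated only up to $k = 6$.
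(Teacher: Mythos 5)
You have not addressed the statement you were asked to prove. Theorem~\ref{mainpan} asserts that \emph{super}-cyclicity (a cycle based on $A$ for \emph{every} $A \subseteq X$ with $|A| \ge 3$, including $|A|$ as large as $|X|$) holds under the degree hypothesis $\delta \ge \max\{n, \frac{m+5}{3}\}$. Your proposal nowhere uses the minimum-degree assumption, and it only discusses $k \in \{3,4,5,6\}$ — indeed your closing remark ("presumably why the theorem is stated only up to $k=6$") makes clear you were reading Theorem~\ref{Xk}, not Theorem~\ref{mainpan}. An argument that produces cycles only for base sets of size at most $6$ does not establish super-cyclicity, and any argument that ignores the bound $\delta \ge \frac{m+5}{3}$ cannot be proving this theorem, since condition~\eqref{lll} alone is not known (and is only conjectured, in Conjecture~\ref{mcon}) to imply super-cyclicity.

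For the record, the paper does not prove Theorem~\ref{mainpan}; it cites it from~\cite{KLZ}. The related proof the paper does give — of the stronger Theorem~\ref{mainpan3} with $\delta \ge \max\{n, \frac{m+10}{4}\}$ — is a minimal-counterexample argument that takes a cycle $C$ based on $X - \{x\}$, builds a large $x,C$-fan, and uses the crossing Lemma~\ref{cros12} together with the degree bound to force $|Y| \ge 4\delta - 9$, contradicting $m \le 4\delta - 10$. The fan/crossing machinery is exactly where the arithmetic with $\delta$ and $m$ enters, and it is completely absent from your sketch. If you instead intended to prove Theorem~\ref{Xk}, note that the $k=3$ step of your contraction-plus-SDR scheme does work (the union bound gives Hall's condition), and the observation that $H$ inherits $2$-connectivity is correct; but for $k = 4,5,6$ you give only a plan ("I would then apply~\eqref{lll} to a suitable $3$-subset," "I would exhibit a $3$- or $4$-subset") without carrying out the case analysis, so even as a proof of Theorem~\ref{Xk} it is incomplete. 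The paper's own route to $k \le 6$ runs through critical and saturated critical bigraphs (Sections~\ref{sec:critical}--\ref{sec:k-6}) rather than the auxiliary graph $H$.
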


We use Theorem~\ref{Xk} and the ideas of the proof of Theorem~\ref{jackson6}  to strengthen Theorem~\ref{mainpan} as follows.

\begin{thm}\label{mainpan3} Let $\delta\geq \max\{n, \frac{m+10}{4}\}$. If $G \in \cG(n,m,\delta)$ satisfies~\eqref{lll}, then $G$ is super-cyclic.
\end{thm}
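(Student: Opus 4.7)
The plan is to induct on $k = |A|$. The base cases $k \in \{3, 4, 5, 6\}$ are handled directly by Theorem~\ref{Xk}, which explains why that theorem was proved first. For the inductive step, assume $k \geq 7$ and that the conclusion holds for every $A' \subseteq X$ with $3 \leq |A'| < k$.

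Fix $A \subseteq X$ with $|A| = k$, let $B = \widehat{N}(A)$, and let $H = G[A \cup B]$. By~\eqref{lll}, $H$ is $2$-connected and $|B| \geq k$. The primary goal is to apply Theorem~\ref{jackson6} to $H$, viewed as a bipartite graph with $X$-part $A$ and $Y$-part $B$. This requires $H$ to be $3$-connected and $d_H(a) \geq \max\{k, (|B|+10)/4\}$ for every $a \in A$. For each $a \in A$, write $d_H(a) = d_G(a) - p_a$, where $p_a$ is the number of \emph{private} $Y$-neighbors of $a$, namely $y \in N_G(a)$ with $N_G(y) \cap A = \{a\}$; note $\sum_{a \in A} p_a \leq m - |B|$, so on average $p_a$ is small. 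When $H$ is $3$-connected and the degree condition holds, Theorem~\ref{jackson6} immediately yields a cycle of length $2k$ based on $A$, and we are done.

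If $H$ is $2$-connected but not $3$-connected, pick a $2$-vertex cut $\{u,v\}$ and form lobes $H_1, \ldots, H_s$, each consisting of $\{u,v\}$ together with one component of $H - \{u,v\}$. For each lobe, set $A_i = (A \cap V(H_i)) \cup (\{u,v\} \cap A)$, verify that~\eqref{lll} is inherited (using $2$-connectivity of $H$), apply the inductive hypothesis to each lobe, and splice the resulting cycles through $u$ and $v$ into a single cycle based on $A$. The case analysis branches on how many of $u, v$ lie in $A$ versus $B$, paralleling the $2$-cut analysis in the proof of Theorem~\ref{mainpan} in~\cite{KLZ}.

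The remaining and most delicate case is that $H$ is $3$-connected but some $a_0 \in A$ has $d_H(a_0) < (|B|+10)/4$, which forces $a_0$ to have many private $Y$-neighbors. Apply the inductive hypothesis to $A \setminus \{a_0\}$ to obtain a cycle $C'$ based on $A \setminus \{a_0\}$, and insert $a_0$: find $y_1, y_2 \in N_H(a_0) \setminus V(C')$ and a $Y$-vertex $y$ on $C'$ sitting between consecutive $A$-vertices $a', a''$ of $C'$ with $y_1 \in N(a')$ and $y_2 \in N(a'')$, then replace the subpath $a' y a''$ by $a' y_1 a_0 y_2 a''$. The main obstacle is showing that such $y_1, y_2$ and such an insertion point always exist when $a_0$ has few $B$-neighbors: here one must combine the global bound $d_G(a_0) \geq \delta \geq n \geq k$, the fact that $C'$ uses only $k-1$ vertices of $Y$, the $3$-connectivity of $H$, and the structural arguments from the proof of Theorem~\ref{jackson6} in~\cite{KLMZ}, as the authors indicate. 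If no single insertion works, a rotation-extension argument along the longest cycle based on a subset of $A$—again in the spirit of~\cite{KLMZ}—is used to reach a contradiction with the assumption that $a_0$ is omitted.
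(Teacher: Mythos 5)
Your proposal is not a proof but a plan, and the plan leaves the hardest parts unaddressed. Moreover, the induction you set up doesn't quite type-check.

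First, your induction is on $|A|$, and you want to prove that a cycle based on $A$ exists. But you then try to apply the inductive hypothesis \emph{inside the subgraph} $H = G[A \cup \widehat N(A)]$, or inside lobes $H_i$ of a $2$-cut decomposition of $H$. The inductive hypothesis gives cycles in $G$ based on smaller sets $A'\subseteq X$; these cycles are free to use $Y$-vertices outside $H$ (or outside the lobe $H_i$), so they need not pass through the cut pair $\{u,v\}$ at all, and splicing has no reason to succeed. You'd need the lobes themselves to satisfy the degree and connectivity hypotheses of Theorem~\ref{mainpan3}, but nothing forces that: a vertex $a\in A_i$ may have $d_{H_i}(a)$ far below $|A_i|$. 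The paper avoids this entirely: it inducts on $|V(G)|$, so that in a minimal counterexample $G[X'\cup Y]$ is already super-cyclic for \emph{every} proper $X'\subset X$, and the only set that needs attention is $A=X$ in $G$ itself, where the global hypotheses $\delta \geq \max\{n,(m+10)/4\}$ genuinely apply. In that framework the paper proves (Lemma~\ref{3con}) that no $2$-cut separates two $X$-vertices, rather than decomposing along $2$-cuts.

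Second, your ``primary'' case (apply Theorem~\ref{jackson6} to $H$) requires $d_H(a)\ge\max\{|A|,(|B|+10)/4\}$ for \emph{every} $a\in A$. You note $\sum_a p_a \le m - |B|$, but an average bound is useless here; a single $a_0$ with many private neighbors kills the hypothesis, and you only discuss the failure $d_H(a_0)<(|B|+10)/4$, not $d_H(a_0)<|A|$. More importantly, your handling of this failure is one sentence: ``combine the global bound\dots the $3$-connectivity of $H$, and the structural arguments from the proof of Theorem~\ref{jackson6} in~\cite{KLMZ} \dots a rotation-extension argument \dots is used to reach a contradiction.'' That sentence is the entire content of the theorem. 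The paper's proof spends Section~\ref{sec:main-proof} on exactly this: it fixes a vertex $x$, a cycle $C$ on $X-\{x\}$, and an $x,C$-fan $F$ of size $t$, and then develops a hierarchy of lemmas (about $X^+(T)$, crossings, disjointness of neighborhoods outside $C$, and a degree sum bound for crossing pairs) to force $|Y|\ge 4\delta - 9 > m$, a contradiction. None of that machinery is reproduced or even outlined in your proposal; citing~\cite{KLMZ} ``in spirit'' does not discharge it, especially since the argument there is for a single long cycle, not for the super-cyclic conclusion, and adapting it requires the critical/saturated/$Y$-minimal structure built in Sections~\ref{sec:critical}--\ref{sec:saturated}.

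In short: the right ingredients are named (fans, insertion of a deleted vertex, the link to Theorem~\ref{jackson6}), but the induction variable is chosen in a way that breaks the $2$-cut reduction, one branch of the degree-failure case is missing, and the core counting/crossing argument---the actual mathematical content---is replaced by a reference.
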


In terms of hypergraphs, our result is as follows.

\begin{cor}[Hypergraph version of Theorem~\ref{mainpan3}]\label{mainpan2} Let $\delta\geq \max\{n, \frac{m+10}{4}\}$.
 If the incidence graph of
 an $n$-vertex hypergraph $\cH$  with $m$ edges and minimum degree $\delta(\cH)$ satisfies~\eqref{lll},
 then $\cH$ is super-pancyclic.
\end{cor}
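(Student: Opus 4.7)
The plan is to derive Corollary~\ref{mainpan2} as a direct translation of Theorem~\ref{mainpan3} via the incidence graph correspondence already set up in the introduction. Let $\cH$ be an $n$-vertex hypergraph with $m$ edges and minimum degree $\delta(\cH) \geq \max\{n, (m+10)/4\}$, and let $G = I(\cH)$ be its incidence graph, with bipartition $(X, Y) = (V(\cH), E(\cH))$. By the definition of the incidence graph, $|X|=n$, $|Y|=m$, and every $x \in X$ has degree $d_G(x) = d_{\cH}(x) \geq \delta(\cH)$, so $G \in \cG(n, m, \delta(\cH))$. By hypothesis $G$ satisfies~\eqref{lll}, so Theorem~\ref{mainpan3} applies and $G$ is super-cyclic.

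Next I would unpack what super-cyclicity of $G$ means for $\cH$. Fix any $A \subseteq V(\cH)$ with $|A|\geq 3$. Regarding $A$ as a subset of $X$, super-cyclicity of $G$ yields a cycle $C$ in $G$ with $V(C)\cap X = A$. Writing $C = v_1 e_1 v_2 e_2 \dots v_\ell e_\ell v_1$, the bipartite structure forces the $v_i$'s to lie in $X$ and the $e_i$'s to lie in $Y$, so $\{v_1,\dots,v_\ell\} = A$ and $\{e_1,\dots,e_\ell\}$ are $\ell$ distinct edges of $\cH$ with $v_i, v_{i+1} \in e_i$ for every $i$ (indices mod $\ell$). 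This is precisely a Berge cycle in $\cH$ with base vertex set $A$, as noted in the correspondence between cycle lengths in $\cH$ and in $I(\cH)$ given in the introduction. Since $A$ was arbitrary, $\cH$ is super-pancyclic.

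In short, the corollary is a one-line consequence of Theorem~\ref{mainpan3} together with the bijection between Berge cycles based on $A$ in $\cH$ and cycles $C$ in $I(\cH)$ with $V(C)\cap X = A$; there is no additional obstacle beyond verifying that the bipartite cycle produced by Theorem~\ref{mainpan3} has its $X$-vertices equal to $A$ (not merely containing or contained in $A$), which is built into the definition of super-cyclic used there.
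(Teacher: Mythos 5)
Your proof is correct and matches the paper's intent: Corollary~\ref{mainpan2} is stated as an immediate translation of Theorem~\ref{mainpan3} via the incidence-graph correspondence from the introduction, and the paper gives no further argument. You have simply spelled out the routine verification that $I(\cH)\in\cG(n,m,\delta)$ and that a cycle $C$ with $V(C)\cap X = A$ corresponds exactly to a Berge cycle based on $A$, which is all that is needed.
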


We present the main proofs in the language of bipartite graphs. We will say that an $(X,Y)$-bigraph $G$ is {\em critical} if the following conditions hold:
\begin{enumerate}
\item[(a)] $G$ satisfies~\eqref{lll} but is not super-cyclic,
\item[(b)] $\widehat{N}(X) = Y$, and
\item[(c)] for every $X' \subset X$ with $X' \neq X$, $G[X' \cup Y]$ is super-cyclic.
\end{enumerate}
Note that every graph satisfying~\eqref{lll} is either super-cyclic or has a critical subgraph.

Furthermore, we say that a critical $(X,Y)$-bigraph $G$ is {\em saturated} if, after adding any $X,Y$-edge to $G$, the resulting graph is super-cyclic.

In Section~\ref{sec:critical} we prove basic properties of critical bigraphs. Based on this, in Section~\ref{sec:small-k} we prove Theorem~\ref{Xk} for $k=3, 4$, and $5$. In Section~\ref{sec:saturated} we discuss saturated critical graphs, which will be useful in the last two sections. In Section~\ref{sec:k-6} we prove Theorem~\ref{Xk} for $k=6$. In Section~\ref{sec:main-proof} we prove Theorem~\ref{mainpan3}.


\section{Properties of critical bigraphs}\label{sec:critical}

For all $(X,Y)$-bigraphs $G$ below we assume $|X| \ge 3$, since $G$ is trivially super-cyclic when $|X|\le 2$.

\begin{lemma}\label{common-neighbor}
Suppose that an $(X,Y)$-bigraph $G$ satisfies~\eqref{lll}.  Then $|N(x) \cap N(x')| \ge 1$ for all distinct $x,x'\in X$.
\end{lemma}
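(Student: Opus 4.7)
The plan is to argue by contradiction. Suppose there exist distinct $x, x' \in X$ with $N(x) \cap N(x') = \emptyset$. Since $|X| \ge 3$, pick any third vertex $x'' \in X \setminus \{x, x'\}$ and set $A = \{x, x', x''\}$. Apply condition~\eqref{lll} to this $A$: we get that $G[A \cup \widehat{N}(A)]$ is $2$-connected, so in particular it remains connected after deleting $x''$.

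Next, I would analyze what $\widehat{N}(A)$ looks like. Every $y \in \widehat{N}(A)$ has at least two neighbors in $A$. But since $N(x) \cap N(x') = \emptyset$, no $y$ can be adjacent to both $x$ and $x'$. So every $y \in \widehat{N}(A)$ is adjacent to $x''$ and to exactly one of $x, x'$. Partition $\widehat{N}(A) = N_x \cup N_{x'}$ accordingly, where $N_x$ is the set of $y$'s adjacent to $x$ (and $x''$) and $N_{x'}$ the set of those adjacent to $x'$ (and $x''$).

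Now consider the induced subgraph $H := G[A \cup \widehat{N}(A)] - x''$. In $H$, the vertex $x$ has all its neighbors in $N_x$, the vertex $x'$ has all its neighbors in $N_{x'}$, every $y \in N_x$ has its only neighbor equal to $x$ (since in $H$ we removed $x''$, and bipartiteness precludes a $y$--$x'$ edge for $y \in N_x$), and symmetrically for $N_{x'}$. Thus $H$ splits into two components, one containing $x \cup N_x$ and the other containing $x' \cup N_{x'}$ (with either possibly being just an isolated vertex if that side is empty). Hence $x''$ is a cut vertex of $G[A \cup \widehat{N}(A)]$, contradicting $2$-connectedness from~\eqref{lll}.

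There is essentially no obstacle here beyond this case analysis; the only minor subtlety is confirming that the degenerate possibilities ($N_x$ or $N_{x'}$ empty) still give a contradiction, which they do because then $x$ or $x'$ becomes isolated in $H$, so $H$ is disconnected regardless. This completes the proof.
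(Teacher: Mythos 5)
Your proof is correct and takes essentially the same approach as the paper: both pick a third vertex $x''$, set $A=\{x,x',x''\}$, and observe that if $N(x)\cap N(x')=\emptyset$ then $G[A\cup \widehat{N}(A)]-x''$ has no $x,x'$-path, contradicting $2$-connectedness. The paper simply states this in one line, while you spell out the partition of $\widehat{N}(A)$ and the degenerate cases explicitly.
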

\begin{proof}
Let $x''$ be any vertex in $X-\{x,x'\}$ and $A = \{x, x', x''\}$. If $N(x) \cap N(x')=\emptyset$, then $G[A \cup \widehat{N}(A)] - x''$ has no $x, x'$-path, contradicting~\eqref{lll}.
\end{proof}

\begin{claim}\label{crit2}
Let 	 $G$ be a critical $(X,Y)$-bigraph. Then 
 $G$ is $2$-connected.	
	\end{claim}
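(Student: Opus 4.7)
The proof I would give is essentially a one-line observation that follows directly from the definition of a critical bigraph combined with the hypothesis~\eqref{lll}. Here is the plan.

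Since the paper declares at the start of Section~\ref{sec:critical} that we assume $|X|\ge 3$, I can apply the condition~\eqref{lll} to the set $A=X$ itself. This gives that $G[X\cup \widehat{N}(X)]$ is $2$-connected. Now condition (b) in the definition of criticality asserts precisely that $\widehat{N}(X)=Y$. Combining these, $G[X\cup Y]=G$ is $2$-connected, which is what we want.

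So the plan is simply: first quote condition (a) of criticality, which guarantees that $G$ satisfies~\eqref{lll}; next invoke~\eqref{lll} with $A=X$ (legal because $|X|\ge 3$) to conclude that $G[X\cup\widehat{N}(X)]$ is $2$-connected; then invoke condition (b) of criticality to replace $\widehat{N}(X)$ by $Y$. The resulting graph is all of $G$, so $G$ is $2$-connected.

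There is essentially no obstacle here — the content of Claim~\ref{crit2} is just the specialization of the $2$-connectivity clause of~\eqref{lll} to the ``full'' set $A=X$, which is made available by the normalization in clause (b) that every $y\in Y$ has at least two neighbors in $X$ (so $Y$ coincides with $\widehat{N}(X)$). The only thing worth verifying explicitly is that $|X|\ge 3$, which is the standing assumption of the section, so~\eqref{lll} can indeed be applied to $A=X$. No use of clause (c) (the super-cyclicity of proper subinstances) is required for this claim; that will enter later results.
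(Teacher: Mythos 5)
Your proof is correct and is essentially the same as the paper's, which simply cites $Y=\widehat{N}(X)$ together with~\eqref{lll}; you have merely spelled out the application of~\eqref{lll} to $A=X$ and the substitution $\widehat{N}(X)=Y$ explicitly.
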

\begin{proof} This is by the fact that $Y=\widehat{N}(X)$ and by~\eqref{lll}.
\end{proof}



Let $G$ be a critical $(X,Y)$-bigraph with $|X|=k+1$ and $x_0\in X$. By  definition, $G-\{x_0\}$ is super-cyclic. In particular, it has a cycle
 $C = x_1y_1x_2y_2\dots x_ky_kx_1$ based on $X-\{x_0\}$.
We index the vertices of $C$ modulo $k$; for example, $x_{k+1} = x_1$. We  derive some properties of such triples $(G,x_0,C)$.

\smallskip
\begin{claim}\label{no-con-1}
	For all  $y_i, y_j \in N(x_0)$, $x_i$ and $x_j$ have no common neighbor outside $C$. Similarly, $x_{i+1}$ and $x_{j+1}$ have no common neighbor outside $C$.
\end{claim}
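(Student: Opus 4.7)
The plan is a direct contradiction argument that leverages criticality to reduce the task to a single construction: exhibit a cycle in $G$ whose intersection with $X$ is all of $X$. Indeed, $G$ critical means by (a) that $G$ is not super-cyclic, while (b)--(c) together with $\widehat N(X)=Y$ force $G[X'\cup Y]$ to be super-cyclic for every $X'\subsetneq X$; hence the unique subset $A\subseteq X$ of size $\ge 3$ on which $G$ has no cycle based on $A$ is $A=X$ itself.

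So assume $i\ne j$ (otherwise the claim is vacuous) and, by relabeling along the cyclic order of $C$, that $i<j$. Suppose for contradiction that $y\in Y\setminus V(C)$ is a common neighbor of $x_i$ and $x_j$. I would consider the closed sequence
\[
x_0\; y_i\; x_{i+1}\; y_{i+1}\; x_{i+2}\;\cdots\; y_{j-1}\; x_j\; y\; x_i\; y_{i-1}\; x_{i-1}\;\cdots\; y_{j+1}\; x_{j+1}\; y_j\; x_0,
\]
which enters $C$ at $y_i$, traverses $C$ forward from $x_{i+1}$ to $x_j$, uses the shortcut $y$ to jump to $x_i$, traverses $C$ backward from $x_i$ to $x_{j+1}$, and returns to $x_0$ via $y_j$. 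The $X$-vertices visited form exactly $X$ (the two arcs partition $X\setminus\{x_0\}$); the $Y$-vertices visited are $y_1,\ldots,y_k$ together with the new vertex $y\notin V(C)$, all distinct; and each consecutive pair spans an edge of $G$: $x_0y_i$ and $x_0y_j$ by the hypothesis $y_i,y_j\in N(x_0)$, $x_jy$ and $yx_i$ by the common-neighbor hypothesis, and every remaining edge lies on $C$ (using $y_\ell$ as an edge between $x_\ell$ and $x_{\ell+1}$). This is a cycle based on $X$, contradicting criticality.

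The ``similarly'' statement follows from the mirror construction: if $y'\in Y\setminus V(C)$ is a common neighbor of $x_{i+1}$ and $x_{j+1}$, use
\[
x_0\; y_i\; x_i\; y_{i-1}\;\cdots\; y_{j+1}\; x_{j+1}\; y'\; x_{i+1}\; y_{i+1}\;\cdots\; y_{j-1}\; x_j\; y_j\; x_0,
\]
which enters $C$ at $y_i$ on the $x_i$-side rather than the $x_{i+1}$-side and exits via $y_j$ from $x_j$ instead of $x_{j+1}$. The verification of edges and vertex-distinctness is identical. The main (mild) obstacle is choosing the correct handedness at which $y_i,y_j$ attach to the new cycle so that no $y_\ell$ is used twice; once that side-choice is made consistently with $y$ (respectively $y'$), every edge is immediate from either $C$, the hypothesis $y_i,y_j\in N(x_0)$, or the common-neighbor hypothesis, and no combinatorial difficulty remains.
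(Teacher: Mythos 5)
Your proposal is correct and follows essentially the same approach as the paper: given the hypothetical common neighbor $y\notin V(C)$, exhibit an explicit cycle based on all of $X$, contradicting criticality. Your cycle is the same one the paper constructs (read from a different starting vertex and with the traversal direction reversed), and your handling of the ``similarly'' part via the explicit mirror construction is precisely what the paper means by its one-line remark to ``consider the cycle $C$ in reverse and apply the same argument.''
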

\begin{proof}
	If $x_i$ and $x_j$ have a common neighbor $y \notin V(C)$, then the cycle
	\[
		x_1 y_1 \dots x_i y x_j y_{j-1} \dots y_i x_0 y_j x_{j+1} \dots x_1
	\]
	is based on  $X$, contrary to assumption. If $x_{i+1}$ and $x_{j+1}$ have such a common neighbor, consider the cycle $C$ in reverse and apply the same argument.
\end{proof}

\begin{claim}\label{no-con-2}
	For every $y_i \in N(x_0)$,  $x_i$ and $x_0$ have no common neighbor outside $C$; similarly,  $x_{i+1}$ and $x_0$ have no common neighbor outside $C$.
\end{claim}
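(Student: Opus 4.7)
The plan is to argue by contradiction by exhibiting, in each case, a cycle in $G$ based on all of $X$. This suffices because criticality ensures that $G[X'\cup Y]$ is super-cyclic for every proper subset $X'\subsetneq X$ with $|X'|\ge 3$ (property~(c)): so for any $A\subsetneq X$ with $|A|\ge 3$, picking $x'\in X\setminus A$ and invoking super-cyclicity of $G[(X\setminus\{x'\})\cup Y]$ already yields a cycle in $G$ based on $A$. The only obstruction to $G$ being super-cyclic is therefore the absence of a cycle based on $X$ itself, and producing one contradicts~(a).

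For the first assertion, suppose $x_0$ and $x_i$ share a neighbor $y\notin V(C)$. Since $y_i\in N(x_0)$, I would splice $x_0$ into $C$ at the segment currently occupied by $y_i$ (between $x_i$ and $x_{i+1}$), using the two edges $y_i$ (from $x_0$ to $x_{i+1}$) and $y$ (from $x_i$ to $x_0$). The resulting closed walk
\[
x_0\, y_i\, x_{i+1}\, y_{i+1}\, x_{i+2}\, \cdots\, y_k\, x_1\, y_1\, \cdots\, y_{i-1}\, x_i\, y\, x_0
\]
visits each of $x_0,x_1,\ldots,x_k$ exactly once and uses the $k+1$ distinct $Y$-vertices $y_1,\ldots,y_k,y$ (distinct because $y\notin V(C)$), so it is a cycle in $G$ based on $X$, giving the sought contradiction.

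For the second assertion, the same argument applies after reversing the orientation of $C$: if $y'\notin V(C)$ is a common neighbor of $x_0$ and $x_{i+1}$, the analogous walk
\[
x_0\, y_i\, x_i\, y_{i-1}\, x_{i-1}\, \cdots\, y_1\, x_1\, y_k\, x_k\, \cdots\, y_{i+1}\, x_{i+1}\, y'\, x_0
\]
is a cycle based on $X$, again contradicting~(a). I expect no real obstacle beyond keeping indices straight modulo $k$; the construction is a direct variant of that in Claim~\ref{no-con-1}, with $x_0$ substituting for one of the $x_j$'s and the known incidence $y_i\in N(x_0)$ supplying one of the two splicing edges.
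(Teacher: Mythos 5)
Your proof is correct and takes essentially the same approach as the paper: replace the edge $x_iy_i$ of $C$ with the detour $x_i\,y\,x_0\,y_i$ to obtain a cycle based on all of $X$, which contradicts criticality, and handle $x_{i+1}$ by the same argument with $C$ reversed. The explicit listing of the new cycle and the preliminary remark about why a cycle based on $X$ yields the contradiction are just more detailed renditions of what the paper states compactly.
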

\begin{proof}
	If $x_i$ and $x_0$ have a common neighbor $y \notin V(C)$, then we may extend $C$ to a cycle based on $X$ by replacing the edge $x_iy_i$ with the path $x_i y x_0 y_i$. The proof for $x_{i+1}$ is similar.
\end{proof}

\begin{claim}\label{no-con-3}
	For every $i$, if $x_i$ has a common neighbor $y$ with $x_0$ outside $C$, then $x_{i+1}$ has no common neighbor with $x_0$ outside $C$, except possibly for $y$.
\end{claim}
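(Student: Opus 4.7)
The plan is to argue by contradiction, exploiting condition (c) in the definition of a critical bigraph: since $G$ is critical, $G$ has no cycle based on all of $X$ (every proper $X'\subsetneq X$ supports one, but $G$ itself fails to be super-cyclic). So it suffices to exhibit a cycle in $G$ whose $X$-side is all of $X$.

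Suppose, toward a contradiction, that $y$ is a common neighbor of $x_i$ and $x_0$ outside $C$, and $y'\neq y$ is a common neighbor of $x_{i+1}$ and $x_0$ outside $C$. Starting from the cycle $C = x_1y_1 x_2 y_2 \dots x_k y_k x_1$ (which resides in $G-x_0$), I would delete the length-two subpath $x_i y_i x_{i+1}$ and splice in the length-four detour $x_i\, y\, x_0\, y'\, x_{i+1}$, producing the closed walk
\[
	C' \;=\; x_1 y_1 \dots x_i\, y\, x_0\, y'\, x_{i+1}\, y_{i+1}\, \dots x_k y_k x_1.
\]
The four new edges $x_i y$, $y x_0$, $x_0 y'$, $y' x_{i+1}$ all exist by the assumed common-neighbor relations, and $y_i$ is simply dropped.

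To see $C'$ is a genuine cycle based on $X$, the only thing to check is that its vertices are pairwise distinct. On the $X$-side the vertex set is now $\{x_0,x_1,\dots,x_k\}=X$ with the $x_i$'s pairwise distinct from $C$ and $x_0\notin\{x_1,\dots,x_k\}$. On the $Y$-side it is $\{y_1,\dots,y_k\}\setminus\{y_i\}\cup\{y,y'\}$: the $y_j$'s are distinct from $C$, while $y,y'\notin V(C)$ by hypothesis and $y\neq y'$ by our contradiction assumption. Thus $C'$ is a cycle in $G$ based on $X$, contradicting criticality of $G$ (condition (c) combined with (a)). Hence no such $y'$ exists, which is exactly the statement of the claim.

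I expect no real obstacle here — the argument is a short rerouting of a single segment of $C$ through $x_0$, of the same flavor as Claims~\ref{no-con-1} and~\ref{no-con-2}. The only point needing care is the bookkeeping that $y$ and $y'$ are genuinely new (i.e., off $C$ and distinct from each other), which is built into the hypothesis, and that the criticality of $G$ forbids a cycle based on the full set $X$, which is where conditions (a)–(c) of being critical are invoked.
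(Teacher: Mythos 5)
Your argument is identical to the paper's: replace the subpath $x_i y_i x_{i+1}$ of $C$ with the detour $x_i\,y\,x_0\,y'\,x_{i+1}$ to produce a cycle based on $X$, contradicting criticality. The bookkeeping on distinctness of $y$ and $y'$ is correct and is exactly the point built into the hypothesis.
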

\begin{proof}
	If $x_{i+1}$ and $x_0$ have a common neighbor $y' \notin V(C)$, with $y' \ne y$, then we may extend $C$ to a cycle based on $X$ by replacing the path $x_i y_i x_{i+1}$ with the path $x_i y x_0 y' x_{i+1}$.
\end{proof}

\begin{lemma}\label{two-neighbors}
	The vertex $x_0$ has at least two neighbors in $C$.
\end{lemma}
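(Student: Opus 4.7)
The plan is to argue by contradiction: assume $x_0$ has at most one neighbor on $C$, and after cyclically relabeling $C$ we may assume either $N(x_0)\cap V(C)=\emptyset$ or $N(x_0)\cap V(C)=\{y_1\}$. In both situations I would set $A:=\{x_0,x_1,x_2\}$ and apply condition~\eqref{lll} to $A$: since $G[A\cup\widehat{N}(A)]$ is $2$-connected, $x_0$ must have at least two neighbors in $\widehat{N}(A)$, and a vertex $y\in Y$ belongs to $N(x_0)\cap\widehat{N}(A)$ exactly when $y\in N(x_0)\cap\bigl(N(x_1)\cup N(x_2)\bigr)$. Thus the goal becomes: show that the assumption either forces $|N(x_0)\cap(N(x_1)\cup N(x_2))|\leq 1$ (contradicting $2$-connectivity) or produces a cycle based on~$X$ (contradicting criticality).

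In the first case ($N(x_0)\cap V(C)=\emptyset$), Lemma~\ref{common-neighbor} gives $N(x_0)\cap N(x_1)\neq\emptyset$ and $N(x_0)\cap N(x_2)\neq\emptyset$. Combining this with $|N(x_0)\cap(N(x_1)\cup N(x_2))|\geq 2$ from $2$-connectivity, I would argue there must exist distinct $\alpha\in N(x_0)\cap N(x_1)$ and $\beta\in N(x_0)\cap N(x_2)$: otherwise both of these sets would coincide with a single common vertex $z$, making $|N(x_0)\cap(N(x_1)\cup N(x_2))|=1$. Since $\alpha,\beta\in N(x_0)\subseteq Y\setminus V(C)$, neither conflicts with any surviving edge of $C$, so replacing the subpath $x_1y_1x_2$ in $C$ with $x_1\alpha x_0\beta x_2$ yields a cycle whose base set is all of $X$, contradicting the criticality of $G$.

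In the second case ($N(x_0)\cap V(C)=\{y_1\}$), I would apply Claim~\ref{no-con-2} to the neighbor $y_1\in N(x_0)$: it tells me that $N(x_0)\cap N(x_1)$ and $N(x_0)\cap N(x_2)$ both lie inside $V(C)$. Together with $N(x_0)\cap V(C)=\{y_1\}$ and the fact that $y_1\in N(x_1)\cap N(x_2)$, this forces $N(x_0)\cap N(x_1)=N(x_0)\cap N(x_2)=\{y_1\}$. Hence $x_0$'s only neighbor inside $\widehat{N}(A)$ is $y_1$, giving $x_0$ degree $1$ in $G[A\cup\widehat{N}(A)]$ and directly contradicting $2$-connectivity.

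The main obstacle is the distinctness requirement in the first case: the splice only produces a cycle when $\alpha\neq\beta$, and this cannot be extracted from Lemma~\ref{common-neighbor} alone. It has to be deduced from the lower bound $|N(x_0)\cap(N(x_1)\cup N(x_2))|\geq 2$ supplied by~\eqref{lll}, through the short pigeonhole observation above. The rest of the proof is routine bookkeeping, and the role of Claim~\ref{no-con-2} is essentially to collapse the second case to the same $\widehat{N}(A)$-degree calculation.
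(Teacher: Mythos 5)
Your proof is correct, but it takes a genuinely different route from the paper's. The paper defines a \emph{variable} set $A$ consisting of $x_0$ together with all $x_i$ that have \emph{no} common neighbor with $x_0$ outside $C$, and then splits on $|A|\ge 3$ versus $|A|\le 2$. The $|A|\ge 3$ case concludes quickly (the two neighbors of $x_0$ in $\widehat{N}(A)$ must, by choice of $A$, lie on $C$), while the $|A|\le 2$ case requires a longer argument: Claim~\ref{no-con-3} is used to merge all common neighbors outside $C$ into a single vertex $y_0$ hitting all but at most one $x_i$, and then Claim~\ref{crit2} and Claim~\ref{no-con-2} are used to rule out both possibilities for a second neighbor of $x_0$. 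Your version instead fixes the \emph{single} small set $A=\{x_0,x_1,x_2\}$ (relabeling so that $x_0$'s unique $C$-neighbor, if any, is $y_1$) and uses only Lemma~\ref{common-neighbor}, Claim~\ref{no-con-2}, and the observation that $N(x_0)\cap\widehat{N}(A) = N(x_0)\cap(N(x_1)\cup N(x_2))$. This avoids Claim~\ref{no-con-3} and the $y_0$ bookkeeping entirely, at the cost of the short pigeonhole step you correctly flag as the delicate point: from $S_1:=N(x_0)\cap N(x_1)\ne\emptyset$, $S_2:=N(x_0)\cap N(x_2)\ne\emptyset$, and $|S_1\cup S_2|\ge 2$, one must have distinct $\alpha\in S_1$, $\beta\in S_2$ (the only obstruction is $S_1=S_2=\{z\}$, forbidden by the size bound), and then the splice $x_1\alpha x_0\beta x_2$ replacing $x_1y_1x_2$ gives a cycle based on $X$, contradicting criticality. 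Your Case~2 ($N(x_0)\cap V(C)=\{y_1\}$) is a clean degree contradiction in $G[A\cup\widehat{N}(A)]$. Overall your argument is shorter and more local than the paper's; the paper's dynamic $A$ is more robust (it would adapt if $x_1,x_2$ happened to be badly placed), but for this lemma that robustness isn't needed.
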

\begin{proof}
Let $A$ be the subset of $X$ consisting of $x_0$, together with all $x_i$ that do not have  a common neighbor with $x_0$ outside $C$.

If $|A| \ge 3$, then $G[A \cup \widehat{N}(A)]$ is $2$-connected by~\eqref{lll}, so $x_0$ has at least two neighbors in $\widehat{N}(A)$. Each of these neighbors must also be adjacent to at least one vertex in $A - \{x_0\}$. By our choice of $A$, these neighbors must be in $C$, and we are done.

If $|A| \le 2$, then $x_0$ has a common neighbor outside $C$ with all but at most one of $x_1, x_2, \dots, x_k$. By Claim~\ref{no-con-3}, two consecutive vertices $x_i, x_{i+1}$ cannot have different common neighbors with $x_0$ outside $C$. Therefore there is a vertex $y_0$ outside $C$ adjacent to $x_0$ and to all but at most one of $x_1, x_2, \dots, x_k$.

By Claim~\ref{crit2}, $d(x_0)\geq 2$. So there are two possibilities:
\begin{itemize}
\item If $x_0$ has a neighbor $y_i$ in $C$, then at least one of $x_i$ or $x_{i+1}$ is adjacent to $y_0$; then it has a common neighbor with $x_0$ outside $C$, contradicting Claim~\ref{no-con-2}.

\item If $x_0$ has a neighbor $y_0'$ outside $C$, then $y_0'$ has a neighbor $x_i$ in $C$ because $\delta(G) \ge 2$. By Claim~\ref{no-con-3}, $x_{i-1}$ and $x_{i+1}$ cannot have common neighbors with $x_0$ outside $C$ except possibly for $y_0'$. However, at least one of them is adjacent to $y_0$, which is  a contradiction.
\end{itemize}
Therefore the case $|A| \le 2$ is impossible, completing the proof.
\end{proof}

\section{3-, 4-, and 5-cyclic graphs}\label{sec:small-k}

Theorem~\ref{Xk} makes four claims: for $k=3, 4, 5, 6$. In this section, we prove three of them.

\smallskip
\begin{claim}\label{x3}
All $(X,Y)$-bigraphs $G$ satisfying~\eqref{lll} are $3$-cyclic.
\end{claim}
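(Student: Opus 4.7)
The plan is to explicitly construct a $6$-cycle based on $A$ via case analysis on how the vertices of $\widehat{N}(A)$ attach to $A = \{x_1,x_2,x_3\}$. Any cycle in $G$ based on $A$ must alternate between $A$ and $Y$, so it necessarily has the form $x_1 y_{12} x_2 y_{23} x_3 y_{13} x_1$ for distinct vertices $y_{12}, y_{23}, y_{13} \in \widehat{N}(A)$ with $y_{ij} \in N(x_i) \cap N(x_j)$. Let $H = G[A \cup \widehat{N}(A)]$; by~\eqref{lll}, $H$ is $2$-connected and $|\widehat{N}(A)| \ge 3$. Partition $\widehat{N}(A)$ as $Y_{12} \cup Y_{13} \cup Y_{23} \cup Y_{123}$, where $Y_{ij}$ consists of those $y$ with $N(y) \cap A = \{x_i,x_j\}$ and $Y_{123}$ those with $A \subseteq N(y)$. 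Crucially, vertices in different parts of this partition are automatically distinct, since they have distinct neighborhoods in $A$.

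I then split according to how many of $Y_{12}, Y_{13}, Y_{23}$ are nonempty. If all three are nonempty, choose $y_{ij} \in Y_{ij}$ for each pair and the desired cycle appears. If exactly one of them, say $Y_{23}$, is empty, then in $H-x_1$ any vertex adjacent to both $x_2$ and $x_3$ must lie in $Y_{23} \cup Y_{123} = Y_{123}$, so connectivity of $H - x_1$ (from $2$-connectedness of $H$) forces $Y_{123} \ne \emptyset$; pick $y_{12} \in Y_{12}$, $y_{13} \in Y_{13}$, and $y_{23} \in Y_{123}$. If two of them are empty, say $Y_{13} = Y_{23} = \emptyset$, then $N_H(x_3) \subseteq Y_{123}$; since $d_H(x_3) \ge 2$ by $2$-connectedness, we get $|Y_{123}| \ge 2$, and we choose $y_{12} \in Y_{12}$ together with two distinct vertices $y_{23}, y_{13} \in Y_{123}$. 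Finally, if all three $Y_{ij}$ are empty, then $\widehat{N}(A) = Y_{123}$, which by~\eqref{lll} has at least three vertices, and any three of them work.

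The only delicate point is guaranteeing the three chosen $y$'s are distinct when more than one of them must be drawn from the same set $Y_{123}$; this is handled in each subcase by invoking $2$-connectedness of $H$ to obtain the needed lower bound $|Y_{123}| \ge 2$ (or $\ge 3$ in the last case via~\eqref{lll}). No additional structure of $G$ beyond~\eqref{lll} is needed, and the argument is purely local to $H$.
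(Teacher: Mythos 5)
Your proof is correct, and it takes a genuinely different route from the paper's. The paper passes to a vertex-minimal counterexample (forcing $|X|=3$ and $Y=\widehat{N}(X)$, so $G$ is \emph{critical} in their sense), invokes Claim~\ref{crit2} to get $2$-connectedness of $G$, takes a longest cycle $C=x_1y_1x_2y_2x_1$ of length $4$, and applies the fan lemma to get two disjoint paths from $x_3$ to $C$; from there it argues that the only obstruction forces every $y_3 \notin V(C)$ to be adjacent to $x_1$ and $x_2$, which itself yields a $6$-cycle. Your argument instead works directly and locally inside $H = G[A \cup \widehat{N}(A)]$: you partition $\widehat{N}(A)$ by its trace on $A$ into $Y_{12}, Y_{13}, Y_{23}, Y_{123}$ and case-split on which $Y_{ij}$ are empty, using $2$-connectedness of $H$ (to show $Y_{123}\neq\emptyset$ or $|Y_{123}|\ge 2$ via $d_H(x_i)\ge 2$ or connectedness of $H-x_1$) and $|\widehat{N}(A)|\ge 3$ (in the last case) to guarantee three distinct vertices covering the three pairs. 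What you gain is a short, constructive, self-contained argument that does not rely on the minimal-counterexample/critical-bigraph framework; what the paper's version gains is uniformity, since it instantiates the same ``longest cycle missing one $X$-vertex plus fan'' template that it reuses, in heavier form, for $k=4,5,6$. One small cleanup: in the two-sets-empty case you only need $2$-connectedness to give $d_H(x_3)\ge 2$, which you already say — the rest follows immediately. Everything checks out.
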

\begin{proof}
Suppose the claim is false and take a vertex-minimal counter-example, so that $|X|=3$ and $Y=\widehat{N}(X)$. Then $G$ is critical. 
Suppose the longest cycle $C=x_1y_1x_2y_2x_1$ of $G$ has 4 vertices and does not include the vertex $x_3$ (there is a cycle because $G$ is $2$-connected).
By Claim~\ref{crit2} and the fan lemma,
  there are 2 paths from $x_3$ to $C$ having only  $x_3$ in common. Then $G$ would contain a cycle of length 6 unless the paths are just $x_3y_1$ and $x_3y_2$. By symmetry, for any $y_3$ not in $C$, we need $y_3x_1, y_3x_2 \in E(G)$. Then we get a $6$-cycle $x_1y_1x_3y_2x_2y_3x_1$. 
\end{proof}

\begin{claim}\label{x4}
All  $(X,Y)$-bigraphs $G$ satisfying~\eqref{lll} are $4$-cyclic.
\end{claim}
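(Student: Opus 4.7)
The plan is to take a vertex-minimal counterexample so that $|X|=4$, $Y=\widehat{N}(X)$, and $G$ is critical. Fix a vertex $x_0\in X$; by Claim~\ref{x3}, $G-x_0$ contains a $6$-cycle $C=x_1y_1x_2y_2x_3y_3x_1$ based on $X\setminus\{x_0\}$. Applying~\eqref{lll} to $A=X$ gives $|Y|\geq 4$, so some $y_4\in Y\setminus V(C)$ exists; the task is to combine $y_4$ with the structure of $C$ to build an $8$-cycle based on $X$. By Lemma~\ref{two-neighbors}, $x_0$ has either two or three neighbors among $\{y_1,y_2,y_3\}$, which I split into two cases.

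First I would handle the case $N(x_0)\supseteq\{y_1,y_2,y_3\}$. Here Claim~\ref{no-con-1} applied with every pair $(y_i,y_j)$ of elements of $N(x_0)\cap V(C)$ shows that no two of $x_1,x_2,x_3$ share a common neighbor off $C$, and Claim~\ref{no-con-2} applied with each $y_i$ shows that $x_0$ shares no common neighbor off $C$ with any $x_i$. Thus no two distinct vertices of $X$ share a common neighbor outside $C$, so $|N(y_4)\cap X|\leq 1$, contradicting $y_4\in\widehat{N}(X)$.

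If $|N(x_0)\cap V(C)|=2$, the rotational symmetry of the three $y$-positions on $C$ lets me assume $N(x_0)\cap V(C)=\{y_1,y_2\}$. Claim~\ref{no-con-2} then shows that $x_0$ shares no common neighbor off $C$ with any $x_i$, while Claim~\ref{no-con-1} rules out common neighbors off $C$ for the pairs $\{x_1,x_2\}$ and $\{x_2,x_3\}$. Combining these, every $y\in Y\setminus V(C)$ is forced to have $N(y)\cap X=\{x_1,x_3\}$ (the only pair not forbidden, and $x_0$ together with any $x_i$ being excluded as well). Next I would invoke~\eqref{lll} on the proper subset $A=\{x_0,x_1,x_2\}$: since every off-$C$ vertex contributes nothing to $\widehat{N}(A)$, the bound $|\widehat{N}(A)|\geq 3$ forces $y_3\in N(x_2)$. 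With this adjacency in hand, the $8$-cycle
\[
x_0\,y_1\,x_1\,y_4\,x_3\,y_3\,x_2\,y_2\,x_0
\]
is based on $X$, contradicting the critical assumption.

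The main obstacle is the second case: without exploiting~\eqref{lll} on a proper subset of $X$ to extract the extra edge $y_3x_2$, one cannot close the $8$-cycle through $x_2$. Everything else is a routine bookkeeping application of the constraints already recorded in Claims~\ref{no-con-1}--\ref{no-con-3} and Lemma~\ref{two-neighbors}.
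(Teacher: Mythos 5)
Your proof is correct, but it takes a genuinely different route from the paper's. The paper picks $x_0$ of maximum degree and splits on whether $x_0$ has a neighbor off $C$: in the first case it directly splices a path into $C$, and in the second case it uses the maximum-degree choice together with a counting argument to force $N(x_0) \supseteq \{y_1,y_2,y_3\}$ and then finds an explicit $8$-cycle through an off-$C$ vertex of degree $\ge 2$. You instead take an arbitrary $x_0$ and split on $|N(x_0)\cap V(C)|\in\{2,3\}$. When $x_0$ sees all three $y_i$'s, you combine Claims~\ref{no-con-1} and~\ref{no-con-2} to rule out \emph{any} off-$C$ vertex in $\widehat{N}(X)$, a quick contradiction with $|Y|\ge 4$. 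When $x_0$ sees exactly two, the same claims pin down $N(y)\cap X=\{x_1,x_3\}$ for every off-$C$ $y$, and you then invoke~\eqref{lll} on the \emph{proper} triple $A=\{x_0,x_1,x_2\}$ to extract the extra edge $x_2y_3$ and close the $8$-cycle $x_0y_1x_1y_4x_3y_3x_2y_2x_0$. Your approach sidesteps the maximum-degree trick entirely and leans more systematically on the crossing claims and on applying~\eqref{lll} to proper subsets; this is closer in flavor to the harder $k=5,6$ arguments, whereas the paper's $k=4$ proof is tuned to be as short as possible by exploiting the maximum-degree hypothesis. Both are sound; the paper's is a bit more economical in this particular case, while yours is arguably more uniform and illustrates that~\eqref{lll} on size-$3$ subsets already carries a lot of weight.
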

\begin{proof} Suppose the claim is false and take a vertex-minimal counter-example, so that $|X|=4$ and $Y=\widehat{N}(X)$. Then $G$ is critical.  Let $x_0 \in X = \{x_0,x_1,x_2,x_3\}$ have maximum degree. By Claim~\ref{x3}, $G-\{x_0\}$ has a $6$-cycle $C = x_1y_1x_2y_2x_3y_3x_1$.

\textbf{Case 1:} $x_0$ has a neighbor $y_0$ outside of $C$. By Lemma~\ref{two-neighbors}, $x_0$ is adjacent to at least two of $\{y_1, y_2, y_3\}$; since $\delta(G) \ge 2$, $y_0$ is adjacent to at least one of $\{x_1,x_2,x_3\}$. Then there is an edge of $C$ both of whose endpoints are adjacent to $x_0$ or $y_0$; without loss of generality, it's $x_1y_1$. We can replace $x_1y_1$ by $x_1y_0x_0y_1$, extending $C$.

\textbf{Case 2:} All of $x_0$'s neighbors are in $C$. Then the neighbors of $x$ in $C$ have degree at least $3$, and all other vertices in $Y$ at least $2$. Since $|Y| \ge |X|$, there are also vertices of $X$ with degree $3$, and since $x_0$ was chosen to have maximum degree in $X$, its degree is at least $3$. Therefore $x_0$ is adjacent to all of $\{y_1,y_2,y_3\}$.

Since $|Y|\ge 4$, there is $y_0 \in Y$ outside $C$. By Claim~\ref{crit2}, $y_0$ has at least two neighbors in $X$, and neither of them is $x_0$. Without loss of generality, $y_0$ is adjacent to $x_1$ and $x_2$, and so $G$ has a cycle $x_0y_1x_2y_0x_1y_3x_3y_2x_0$.

In both cases, we get an $8$-cycle,  a contradiction.
\end{proof}

\begin{claim}\label{x5}
All  $(X,Y)$-bigraphs $G$ satisfying~\eqref{lll} are $5$-cyclic.
\end{claim}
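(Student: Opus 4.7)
Plan: We follow the template of the proofs of Claims~\ref{x3} and~\ref{x4}. Assume the claim fails and pick a vertex-minimal counterexample $G$; then $|X|=5$, $Y=\widehat{N}(X)$, and $G$ is critical. Label $X=\{x_0,\ldots,x_4\}$ with $x_0$ of maximum degree in $X$. By Claim~\ref{x4} applied to $G-x_0$, there is an $8$-cycle $C=x_1y_1x_2y_2x_3y_3x_4y_4x_1$ based on $X\setminus\{x_0\}$; the task is to extend $C$ to a $10$-cycle through $x_0$. By Lemma~\ref{two-neighbors}, $|N(x_0)\cap V(C)|\ge 2$. The argument splits into the same two cases as in the proof of Claim~\ref{x4}.

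In Case~1, where $x_0$ has a neighbor $y_0\notin V(C)$, applying Claim~\ref{no-con-2} to each $y_i\in N(x_0)\cap V(C)$ forbids both $x_i$ and $x_{i+1}$ from $N(y_0)$; combined with $\delta(G)\ge 2$, this forces $N(x_0)\cap V(C)$ to be a pair of consecutive $y$'s (any other configuration already forbids all four of $x_1,\ldots,x_4$ from $N(y_0)$). Rotating so that $N(x_0)\cap V(C)=\{y_1,y_2\}$ and $N(y_0)\cap X=\{x_0,x_4\}$, Claim~\ref{common-neighbor} produces a common neighbor $y'$ of $x_1$ and $x_3$. The main sub-case is $y'\notin V(C)$, for which the cycle
\[
x_0\,y_1\,x_2\,y_2\,x_3\,y'\,x_1\,y_4\,x_4\,y_0\,x_0
\]
is a $10$-cycle on $X$; when $y'\in V(C)$, routine variants re-routing through the extra edge from $y'$ to $x_1$ or $x_3$ finish the case.

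In Case~2, where $N(x_0)\subseteq V(C)$, the edge-counting argument from Claim~\ref{x4}'s Case~2 applies verbatim and gives $d(x_0)\ge 3$, so by cyclic symmetry we may assume $\{y_1,y_2,y_3\}\subseteq N(x_0)$. Since $|Y|\ge 5>4$, pick $y_0\in Y\setminus V(C)$; then $|N(y_0)\cap\{x_1,\ldots,x_4\}|\ge 2$. For five of the six possible $2$-subsets of $N(y_0)\cap X$, an explicit $10$-cycle exists; for instance, when $\{x_1,x_3\}\subseteq N(y_0)$ we use
\[
x_0\,y_3\,x_4\,y_4\,x_1\,y_0\,x_3\,y_2\,x_2\,y_1\,x_0.
\]
The exceptional pair $N(y_0)\cap X=\{x_1,x_4\}$ is resolved by splitting on whether $d(x_0)=4$ (giving a direct $10$-cycle $x_0\,y_4\,x_1\,y_0\,x_4\,y_3\,x_3\,y_2\,x_2\,y_1\,x_0$ via $y_4\in N(x_0)$), or $d(x_0)=3$ (invoking Claim~\ref{common-neighbor} on $\{x_1,x_3\}$ to obtain an external common neighbor that plays the role of $y_0$ in the $\{x_1,x_3\}$ cycle above).

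The main obstacle is the fine print when the common neighbor of $x_1$ and $x_3$ produced by Claim~\ref{common-neighbor} lies inside $V(C)$ rather than outside it, or when $N(y_0)$ hits the ``opposite'' pair $\{x_1,x_4\}$ and the candidate common neighbor is forced into $V(C)$: one must either re-route through the relevant extra edge with a modified cycle, or apply the $2$-connectivity clause of~\eqref{lll} to a carefully chosen $3$-subset (such as $\{x_1,x_2,x_4\}$ or $\{x_0,x_3,x_4\}$) to force an additional edge that unlocks a $10$-cycle. Once those hinges are identified, the individual $10$-cycle verifications are routine bookkeeping.
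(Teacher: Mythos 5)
Your overall architecture matches the paper's: vertex-minimal counterexample, $x_0$ of maximum degree, an $8$-cycle $C$ based on $X\setminus\{x_0\}$ from Claim~\ref{x4}, and the same dichotomy into Case~1 ($x_0$ has a neighbor off $C$) and Case~2 ($N(x_0)\subseteq V(C)$). The set-up for Case~1 (forcing $N(x_0)\cap V(C)=\{y_1,y_2\}$, $N(y_0)\cap X=\{x_0,x_4\}$) is correct and is exactly the paper's reduction. But the two sub-cases you flag as ``the main obstacle'' are in fact genuine gaps, and the patches you sketch do not close them.

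In Case~1, Lemma~\ref{common-neighbor} only guarantees \emph{some} common neighbor $y'$ of $x_1,x_3$, and when $y'\in\{y_1,y_2\}$ there is no ``routine re-route'': try exhaustively and you will find that the extra edge $y_1x_3$ (or $y_2x_1$) alone does not produce a $10$-cycle, because $y_0,y_3$ still have only two known neighbors each and both compete for $x_4$. The $3$-subsets you propose, $\{x_1,x_2,x_4\}$ and $\{x_0,x_3,x_4\}$, do not help either: $\widehat N(\{x_0,x_3,x_4\})$ already contains $y_0,y_2,y_3$ and the induced graph is $2$-connected, so \eqref{lll} is satisfied and forces nothing. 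The paper applies \eqref{lll} to the specific set $A=\{x_0,x_1,x_3\}$: after observing that a $10$-cycle exists whenever $x_1y_3$ or $x_3y_4$ is an edge, one computes $\widehat N(A)\cap(V(C)\cup\{y_0\})=\{y_1,y_2\}$, so $|\widehat N(A)|\ge 3$ forces a third vertex $y_5\notin V(C)$; Claim~\ref{no-con-2} then pins $y_5$ to be a common neighbor of $x_1,x_3$ (not $x_0$), and the $10$-cycle $x_1y_5x_3y_2x_2y_1x_0y_0x_4y_4x_1$ finishes. This choice of $A$ is not interchangeable with yours.

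In Case~2, your cycle constructions for the five non-exceptional pairs check out, but for the exceptional $N(y_0)=\{x_1,x_4\}$ with $d(x_0)=3$, Lemma~\ref{common-neighbor} again does not supply an \emph{external} common neighbor of $x_1,x_3$ — it could be $y_1,y_2$, or $y_3$ with an extra edge, and those do not yield $10$-cycles here either. The paper instead shows (by swapping $x_0$ for $x_2$ or $x_3$ in $C$, and swapping an outside vertex for $y_4$, and then invoking Case~1 which never used maximality of $d(x_0)$) that every $y\in Y\setminus V(C)$ and also $y_4$ has $N(y)=\{x_1,x_4\}$; then $\widehat N(\{x_0,x_1,x_2,x_3\})=\{y_1,y_2,y_3\}$ violates the Hall part of \eqref{lll}. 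So Case~2 is resolved by a parity/Hall contradiction, not by finding a $10$-cycle. To repair your proof you would essentially have to replace both of your ``obstacle'' patches by the paper's \eqref{lll}-based arguments.
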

\begin{proof}
Suppose the claim is false and take a vertex-minimal counter-example, so that $|X|=5$ and $Y=\widehat{N}(X)$. Then $G$ is critical. Let $x_0 \in X = \{x_0, x_1,x_2, x_3, x_4\}$ have maximum degree. By Claim~\ref{x4}, $G-\{x_0\}$ has an $8$-cycle $C = x_1y_1x_2y_2x_3y_3x_4y_4$.

\textbf{Case 1:} $x_0$ has a neighbor $y_0$ outside of $C$. By Lemma~\ref{two-neighbors}, $x_0$ is adjacent to at least two of $\{y_1, y_2, y_3, y_4\}$; since $\delta(G) \ge 2$, $y_0$ is adjacent to at least one of $\{x_1,x_2,x_3, x_4\}$. In almost all cases, there is an edge of $C$ both of whose endpoints are adjacent to $x_0$ or $y_0$, in which case we are done as before. The remaining case is unique up to relabeling $C$; without loss of generality, $x_0$ is adjacent to $y_1$ and $y_2$ and $y_0$ is adjacent to $x_4$.

If $x_3y_4$ is an edge, then there is a $10$-cycle
$\quad
	x_1y_1x_2y_2x_0y_0x_4y_3x_3y_4x_1,
\quad$
and similarly there is a $10$-cycle if $x_1y_3$ is an edge. If neither is an edge, then $\widehat{N}(\{x_0, x_1, x_3\})$ contains $y_1$ and $y_2$, but not $y_3$ or $y_4$, so it needs a third vertex (call it $y_5$) which is outside $C$, adjacent to $x_1$ and either to $x_0$ or to $x_3$. In either case, we get a $10$-cycle: one of
\[
	x_1y_5x_3y_2x_2y_1x_0y_0x_4y_4x_1 \text{ or } x_1y_5x_0y_1x_2y_2x_3y_3x_4y_4x_1.
\]
Note that in Case 1, we did not use that $x_0$ has maximum degree.

\textbf{Case 2:} All of $x_0$'s neighbors are in $C$. In this case, as before, we argue that $x_0$ must have degree at least $3$. Say $x_0$ is adjacent to $\{y_1, y_2, y_3\}$; we make no assumption about whether $x_0$ is adjacent to $y_4$.

We can replace $x_2$ or $x_3$ by $x_0$ to get new cycles using the same vertices $y_1,y_2,y_3,y_4$ of $Y$. If $x_2$ or $x_3$ has a neighbor other than $y_1,y_2,y_3,y_4$, then we can apply Case 1.

So all the other vertices of $Y$ (and there must be at least one) must be adjacent only to $x_1$ and $x_4$. Since they can be swapped in for $y_4$ to get a new cycle, if $y_4$ is adjacent to any of $x_0, x_2, x_3$, we can also reduce to a cycle $C$ where Case~1 applies. Therefore $y_4$ is also adjacent only to $x_1$ and $x_4$.

But now $\widehat{N}(\{x_0, x_1, x_2, x_3\}) = \{y_1, y_2, y_3\}$ which violates~\eqref{lll}. 
In all cases, we get a contradiction.
\end{proof}

\section{Saturated critical bigraphs}\label{sec:saturated}

Recall that a critical $(X,Y)$-bigraph $G$ is {\em saturated} if adding to $G$ any $X,Y$-edge results in a super-cyclic bigraph. 

\begin{lemma}\label{5c} If $G$ is a saturated critical $(X,Y)$-bigraph, then for every $y\in Y$, $|N(y)|\neq |X|-1$.
\end{lemma}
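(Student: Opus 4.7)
The plan is to argue by contradiction. Suppose some $y^* \in Y$ satisfies $N(y^*) = X \setminus \{x_0\}$ for some $x_0 \in X$, and set $k = |X|$. By saturation, $G + x_0 y^*$ is super-cyclic and hence contains a cycle $C^*$ based on $X$. Since $G$ itself is not super-cyclic, $C^*$ must use the added edge $x_0 y^*$; otherwise $C^*$ would already be a cycle in $G$ based on $X$, contradicting criticality. Write $C^* = x_0 \, y^* \, x_1 \, y_1 \, x_2 \, y_2 \, \dots \, x_{k-1} \, y_{k-1} \, x_0$. The goal is then to reroute $C^*$ to avoid $x_0 y^*$, producing the desired cycle in $G$ based on $X$.

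The decisive tool is that $y^*$ is adjacent in $G$ to every vertex of $X \setminus \{x_0\}$, so it can shortcut between any two of them. If $x_0$ has some neighbor $y_j \in \{y_1, \dots, y_{k-2}\}$, I would splice $C^*$ at $y_j$ and reconnect through $y^*$, obtaining the cycle
\[
x_0, \, y_j, \, x_j, \, y_{j-1}, \, \dots, \, x_1, \, y^*, \, x_{j+1}, \, y_{j+1}, \, \dots, \, x_{k-1}, \, y_{k-1}, \, x_0,
\]
which lies in $G$. Otherwise $y_{k-1}$ is the only neighbor of $x_0$ on $C^*$ that lies in $E(G)$. By $2$-connectivity (Claim~\ref{crit2}), $x_0$ has a further neighbor $y_0 \notin V(C^*)$, and $y_0$ has some second $X$-neighbor $x_i$; if $1 \le i \le k-2$, the same splice with $y_0$ in place of $y_j$ still works.

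The main obstacle is the residual configuration in which every neighbor of $x_0$ off $C^*$ has $X$-neighborhood exactly $\{x_0, x_{k-1}\}$. Here I split once more. If $y_{k-1}$ has any neighbor $x_j$ with $1 \le j \le k-2$, the hybrid reroute
\[
x_0, \, y_{k-1}, \, x_j, \, y_{j-1}, \, \dots, \, x_1, \, y^*, \, x_{j+1}, \, \dots, \, x_{k-1}, \, y_0, \, x_0
\]
works, using $y_{k-1}$ at the front and $y_0$ at the back. Otherwise $N(y_{k-1}) = \{x_0, x_{k-1}\}$ as well, so every $y \in N(x_0)$ satisfies $N(y) \subseteq \{x_0, x_{k-1}\}$; deleting $x_{k-1}$ then disconnects $\{x_0\} \cup N(x_0)$ from the rest of $G$, making $x_{k-1}$ a cut vertex and contradicting Claim~\ref{crit2}. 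The delicate part is this last case, where one must carefully verify that each earlier reroute truly fails before the neighborhood restrictions on $y_0$ and $y_{k-1}$ can be forced.
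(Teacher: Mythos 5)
Your proposal is correct and follows essentially the same approach as the paper: both start from the cycle in $G + x_0 y^*$ forced by saturation, use the fact that $y^*$ is adjacent to all of $X \setminus \{x_0\}$ to reroute and rule out any neighbor of $x_0$ (on or off $C^*$) meeting $\{x_1, \dots, x_{k-2}\}$, and conclude that every neighbor of $x_0$ has $X$-neighborhood $\{x_0, x_{k-1}\}$. Your ``hybrid reroute'' in the last case is exactly the paper's ``switching $z_1$ with $y_{k-1}$'' step, and your cut-vertex contradiction at $x_{k-1}$ is equivalent to the paper's appeal to Lemma~\ref{common-neighbor}.
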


\begin{proof} Suppose $G$ is a saturated critical $(X,Y)$-bigraph,
 and for $y_0\in Y$ and $x_0\in X$ we have $N(y_0)=X-\{x_0\}$.
 Since $G$ is critical, $G-\{x_0\}$ is super-cyclic, but $G$ has no cycles based on $X$. Let $|X|=k$.
 Since $G$ is saturated,  $G+y_0x_0$ has a $2k$-cycle $y_0x_1y_1x_1y_2\ldots x_{k}y_{0}$ where $x_{k}=x_0$.
 Then $G$ contains path $P=y_0x_1y_1x_2\ldots x_{k}$.

By the choice of $y_0$, $\{x_1,\ldots,x_{k-1}\}\subseteq N(y_{0})$. Thus if $x_k$ is adjacent to any $y_j$ for $1\leq j\leq k-2$,
then $G$ has  cycle $x_ky_jx_jy_{j-1}\ldots y_0x_{j+1}y_{j+1}\ldots x_k$, a contradiction.
Hence $x_k$ has only one neighbor on $P$. Let $N_{G}(x_k)=\{y_{k-1},z_1,z_2,\ldots,z_s\}$.
Since $G$ is $2$-connected, $s\geq 1$. Again, if any $z_i$ is adjacent to any $x_j$ for $j\leq k-2$, then
$G$ has cycle $x_kz_ix_jy_{j-1}\ldots y_0x_{j+1}y_{j+1}\ldots x_k$, a contradiction. Hence $N(z_i)=\{x_{k-1},x_{k}\}$ for
all $1\leq i\leq s$. Switching $z_1$ with $y_{k-1}$ we conclude that $N(y_{k-1})=\{x_{k-1},x_{k}\}$. 
So, the only vertex of $X-\{x_k\}$ at distance $2$ from $x_k$ is $x_{k-1}$, a contradiction to Lemma~\ref{common-neighbor}.
\end{proof}


\begin{lemma}\label{l5}  If $G$ is a saturated critical $(X,Y)$-bigraph and some $x_0 \in X$ has degree $2$, then
\begin{enumerate}
\item[(a)] each of its neighbors is adjacent to all vertices in $X$, and
\item[(b)] $d(x)\geq 4$ for every $x\in X-\{x_0\}$.
\end{enumerate}
In particular, at most one vertex in $X$ has degree $2$.
\end{lemma}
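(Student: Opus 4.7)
The plan is to exploit condition~\eqref{lll} on carefully chosen $3$-element sets $A \subseteq X$ containing $x_0$. Because $N(x_0)=\{y_1,y_2\}$, the degree of $x_0$ inside $G[A\cup\widehat{N}(A)]$ equals $|\{y_1,y_2\}\cap\widehat{N}(A)|$, so if I can arrange for either $y_1$ or $y_2$ to fail to lie in $\widehat{N}(A)$, then $x_0$ will have degree at most $1$ there, violating $2$-connectedness. The other lever will be the cardinality bound $|\widehat{N}(A)|\geq|A|$, which I plan to violate by choosing $A$ so that $\widehat{N}(A)\subseteq\{y_1,y_2\}$. Lemma~\ref{5c} will be the crucial input, since it says that every $y\in Y$ with $|N(y)|<|X|$ actually misses at least two vertices of $X$.

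For (a), I will assume without loss of generality that $y_1\not\sim x'$ for some $x'\in X$; as $y_1\sim x_0$, necessarily $x'\in X-\{x_0\}$. Combining $|N(y_1)|\leq|X|-1$ with Lemma~\ref{5c} forces $|N(y_1)|\leq|X|-2$, so $y_1$ also misses a second vertex $x''\in X-\{x_0\}$. Taking $A=\{x_0,x',x''\}$, the vertex $y_1$ will have only the single neighbor $x_0$ in $A$, so $y_1\notin\widehat{N}(A)$; consequently $x_0$ will have at most the single neighbor $y_2$ inside $G[A\cup\widehat{N}(A)]$, contradicting its required $2$-connectedness. An identical argument handles $y_2$.

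For (b), I will take $x_1\in X-\{x_0\}$ with $d(x_1)\leq 3$. Part (a) gives $\{y_1,y_2\}\subseteq N(x_1)$, so $N(x_1)\subseteq\{y_1,y_2,y_3\}$ for at most one extra vertex $y_3\in Y$. I will build $A=\{x_0,x_1,x^*\}$ for a suitable $x^*\in X-\{x_0,x_1\}$ forcing $\widehat{N}(A)=\{y_1,y_2\}$. If $y_3$ exists, then $y_3\not\sim x_0$ (else $y_3\in N(x_0)=\{y_1,y_2\}$), and Lemma~\ref{5c} sharpens $|N(y_3)|\leq|X|-1$ to $|N(y_3)|\leq|X|-2$; since $y_3\sim x_1$, this furnishes a non-neighbor $x^*\in X-\{x_0,x_1\}$ of $y_3$. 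If $y_3$ is absent, I will pick $x^*$ arbitrarily in $X-\{x_0,x_1\}$. Then any $y\in Y-\{y_1,y_2\}$ satisfies $y\not\sim x_0$, and moreover $y\not\sim x_1$ (when $y\neq y_3$) or $y\not\sim x^*$ (when $y=y_3$); in either case $y$ has at most one neighbor in $A$, so $y\notin\widehat{N}(A)$. This yields $|\widehat{N}(A)|\leq 2<|A|$, again contradicting~\eqref{lll}.

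The final ``in particular'' clause will follow immediately: if two distinct $x,x'\in X$ both had degree $2$, applying (b) to $x$ playing the role of $x_0$ would force $d(x')\geq 4$, contradicting $d(x')=2$. I do not foresee a serious obstacle; the main subtlety is to invoke Lemma~\ref{5c} at exactly the right moments so that the only members of $\widehat{N}(A)$ left standing are $y_1$ and $y_2$.
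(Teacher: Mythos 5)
Your proof of part~(a) is essentially the paper's argument verbatim: apply Lemma~\ref{5c} to produce two non-neighbors $x',x''$ of $y_j$, take $A=\{x_0,x',x''\}$, and observe that $x_0$ has degree at most $1$ in $G[A\cup\widehat{N}(A)]$. Your proof of part~(b), however, takes a genuinely different route. The paper proves (b) by invoking Claim~\ref{x3} (3-cyclicity): for each $x'\in X-\{x,x_0\}$ the $6$-cycle on $\{x,x',x_0\}$ must pass through a common neighbor $y(x')$ of $x$ and $x'$ outside $\{y_1,y_2\}$; if $d(x)\le 3$ all these $y(x')$ coincide, producing a vertex with neighborhood $X-\{x_0\}$ and contradicting Lemma~\ref{5c}. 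You instead use the cardinality half of~\eqref{lll}: assuming $d(x_1)\le 3$, you use Lemma~\ref{5c} (applied to the lone extra neighbor $y_3$, if present) to pick $x^*$ so that $\widehat{N}(\{x_0,x_1,x^*\})=\{y_1,y_2\}$, violating $|\widehat{N}(A)|\ge|A|$. Both arguments funnel through Lemma~\ref{5c} and contradict~\eqref{lll} on a well-chosen triple, but they lean on complementary halves of~\eqref{lll}: the paper uses $2$-connectedness (via $3$-cyclicity), while you use the super-neighborhood cardinality bound directly, which avoids the dependence on Claim~\ref{x3}. Your argument is correct; one small point worth stating explicitly is that $|X|\ge 4$ (which holds because a critical bigraph cannot have $|X|=3$, by Claim~\ref{x3}), so that the vertex $x^*\in X-\{x_0,x_1\}$ always exists.
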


\begin{proof} Suppose $G$ is a saturated critical $(X,Y)$-bigraph, and $d(x_0)=2$ for some $x_0\in X$.  Let $N(x_0)=\{y_1,y_2\}$. We first prove part (a):
\begin{equation}\label{c1}
	N(y_1) = N(y_2) = X. 
\end{equation}
Indeed, suppose  $N(y_j)\neq X$ for some $j\in \{1,2\}$. Then by Lemma~\ref{5c}, $|X-N(y_j)|\geq 2$,
  say, $\{x,x'\}\subseteq X-N(y_j)$.    Consider $A=\{x_0,x,x'\}$ and $B=\widehat{N}_{G}(A)$. Then $y_j\notin B$ and so $d_{G[A\cup B]}(x_0)\leq 1$,
a contradiction to~\eqref{lll}. 
 This proves~\eqref{c1}.

To prove part (b), consider an $x\in X-\{x_0\}$. For any $x'\in X-\{x,x_0\}$, Claim~\ref{x3} for $A=\{x,x',x_0\}$ yields
that there is a common neighbor $y(x')$ of $x$ and $x'$ distinct from $y_1$ and $y_2$. If (b) does not hold then all $y(x')$ coincide,
and hence there is a vertex $y$ adjacent to all vertices in $X$ apart from $x_0$, a contradiction to  Lemma~\ref{5c}. This proves (b).
\end{proof}

\begin{lemma}\label{-2} If $G$ is a saturated critical $(X,Y)$-bigraph,
then
for every $y\in Y$, $|N(y)|\neq |X|-2$.
\end{lemma}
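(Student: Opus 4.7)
The proof will follow the template of Lemma~\ref{5c}. Assume for contradiction that some $y_0 \in Y$ has $|N(y_0)| = |X|-2$; set $k = |X|$ and $N(y_0) = X \setminus \{x_0, x_0'\}$. Since $G$ is saturated, $G + y_0 x_0$ is super-cyclic and contains a cycle based on $X$; since $G$ itself has no such cycle, this cycle uses the added edge. Writing it so that $x_0$ immediately follows $y_0$, we obtain a path $P = y_0 x_1 y_1 x_2 \cdots y_{k-1} x_k$ in $G$ with $x_k = x_0$. Because $x_1 \in N(y_0)$, the other missing vertex $x_0'$ equals $x_t$ for some $t \in \{2,\ldots,k-1\}$.

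The main rerouting step parallels its counterpart in Lemma~\ref{5c}. For any $y_j \in N_G(x_k)$ with $1 \le j \le k-2$, the walk
\[
	x_k\, y_j\, x_j\, y_{j-1}\, x_{j-1}\cdots y_1\, x_1\, y_0\, x_{j+1}\, y_{j+1}\cdots y_{k-1}\, x_k
\]
visits each vertex of $X$ exactly once and is a genuine cycle in $G$ precisely when the edge $y_0 x_{j+1}$ is present, i.e.\ whenever $j+1 \ne t$. Hence $N_G(x_k) \cap \{y_1,\ldots,y_{k-2}\} \subseteq \{y_{t-1}\}$. Replacing $y_j$ by any off-$P$ neighbor $z$ of $x_k$ in the same walk shows analogously that every $X$-neighbor of $z$ other than $x_k$ lies in $\{x_{t-1}, x_{k-1}\}$.

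To conclude, I would swap off-$P$ neighbors of $x_k$ into $P$ (as in the closing step of Lemma~\ref{5c}) so that $y_{k-1}$ inherits the same constraint $N_X(y_{k-1}) \subseteq \{x_{t-1}, x_{k-1}, x_k\}$. Together with the $P$-neighborhood $\{x_{t-1}, x_t\}$ of $y_{t-1}$, this places the set of $X$-vertices at distance two from $x_k$ in $G$ inside $\{x_{t-1}, x_t, x_{k-1}\}$; Lemma~\ref{common-neighbor} then forces $|X|-1 \le 3$, contradicting $|X| \ge 6$ (a consequence of Claims~\ref{x3}--\ref{x5}, which imply that every $(X,Y)$-bigraph satisfying~\eqref{lll} with $|X| \le 5$ is already super-cyclic). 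The main obstacle is the exceptional index $j = t-1$, where the natural reroute would traverse the non-edge $y_0 x_0'$; I would handle it by running the symmetric analysis after adding $y_0 x_0'$ in place of $y_0 x_0$, combining the constraints from both forbidden vertices and, in the subcase where $x_k$ has too few off-$P$ neighbors to execute the swap, directly invoking Lemma~\ref{5c} to rule out a $y \in Y$ of degree exactly $k-1$.
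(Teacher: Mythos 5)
Your setup and the first two rerouting constraints are correct and match the paper: with $P = y_0 x_1 y_1 \cdots y_{k-1}x_k$, $x_k = x_0$, $x_0' = x_t$, you rightly get $N(x_k)\cap\{y_1,\dots,y_{k-2}\}\subseteq\{y_{t-1}\}$ and, for every off-$P$ neighbor $z$ of $x_k$, $N(z)\cap X\subseteq\{x_{t-1},x_{k-1},x_k\}$.

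The gap is in the endgame. You claim the $X$-vertices at distance two from $x_k$ lie inside $\{x_{t-1},x_t,x_{k-1}\}$, using ``the $P$-neighborhood $\{x_{t-1},x_t\}$ of $y_{t-1}$.'' But $y_{t-1}$ lies \emph{on} $P$, so the rerouting argument that constrains off-$P$ neighbors $z$ does not constrain $y_{t-1}$: it may have many $X$-neighbors beyond $x_{t-1}$ and $x_t$, and nothing in your argument rules this out. This is not an edge case --- the paper proves via Claim~\ref{x3} (applied to triples $\{x_k,x_i,x_{i'}\}$ with $x_i,x_{i'}\in T = X-\{x_k,x_{k-1},x_{t-1}\}$) that $x_k y_{t-1}\in E(G)$ \emph{must} hold, so the scenario you are implicitly hoping to avoid is in fact the only one. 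Your proposed workarounds do not close this: re-running the analysis with $y_0x_0'$ added gives constraints along a \emph{different} Hamiltonian path, which do not combine with the ones along $P$ to bound $N(y_{t-1})$, and Lemma~\ref{5c} concerns $|N(y)|=|X|-1$, which is not what is in play here. There is also a secondary gap in the ``swap'' step: to put $z_1$ on $P$ in place of $y_{k-1}$ you need $z_1 x_{k-1}\in E(G)$, but $N(z_1)\cap X\subseteq\{x_{t-1},x_{k-1},x_k\}$ does not guarantee it; the paper in fact shows $z_1x_{k-1}\notin E(G)$, so the swap is never available. The paper's actual endgame sidesteps bounding $N(y_{t-1})$ entirely: it uses $3$-cyclicity to force $x_ky_{t-1}\in E(G)$ and to find $x_i\in T\cap N(y_{k-1})\cap N(y_{t-1})$, uses $2$-connectedness to pin down $N(z_1)=\{x_k,x_{t-1}\}$, and then constructs an explicit spanning cycle in each remaining case ($t\ne k-1$, $t=k-1$). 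You would need to replace your distance-two counting argument with something of this flavor.
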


\begin{proof} Suppose $G$ is a saturated critical $(X,Y)$-bigraph,
 and   $N(y_0)=X-\{x',x''\}$ for some $y_0\in Y$. Let $|X|=k$.


Assume   $d(x')\geq d(x'')$.   By Lemma~\ref{l5}, $d(x')\geq 3$.
Since $G$ is saturated, it
 has a path $P=y_0x_1y_1x_2\ldots y_{k-1}x_{k}$ where $x_{k}=x'$. 
 We may assume $x''=x_j$ for some $j$.

If $x_k$ is adjacent to any $y_i$ for $i\in [k-2]-\{j-1\}$,
then $G$ has  cycle $y_0x_{i+1}y_{i+1}x_{i+2}\ldots x_ky_{i}x_{i}\ldots y_0$, a contradiction. So $N(x_k)\cap V(P)\subseteq \{y_{k-1},y_{j-1}\}$.
 Let $N(x_k)-V(P)=\{z_1,z_2,\ldots,z_s\}$. Since $d(x_k)\geq 3$, $s\geq 1$. Let $T=X- \{x_k,x_{k-1},x_{j-1}\}$.
 Again, if any $z_\ell$ is adjacent to any $x_i\in T$, then
$G$ has cycle $y_0x_{i+1}y_{i+1}x_{i+2}\ldots x_kz_{\ell}x_{i}\ldots y_0$, 
 a contradiction. Hence 
 \begin{equation}\label{193}
	\mbox{\em  $N(z_\ell)\cap T=\emptyset\quad$ for
all $1\leq \ell\leq s$.
	}
\end{equation}
 Since Claim~\ref{x5} implies $k\geq 6$, $|T|\geq 3$. By  Claim~\ref{x3}, for each $x_i,x_{i'}\in T$, $G$ contains a $6$-cycle $C_1$ with $V(C_1)\cap X=
 \{x_k,x_i,x_{i'}\}$, say $C_1=x_kyx_iy'x_{i'}y''x_k$. By~\eqref{193}, $\{y,y''\}\subseteq \{y_{k-1},y_{j-1}\}$. In particular, $x_ky_{j-1}\in E(G)$.
 
 Similarly, if there are $x_i,x_{i'}\in T$ both not adjacent to $y_{k-1}$ or both not adjacent to $y_{j-1}$, then $G$ does not contain a $6$-cycle $C_1$ with $V(C_1)\cap A=
 \{x_k,x_i,x_{i'}\}$; however, $G$ is $3$-cyclic, a contradiction. This means $|N(y_{k-1})\cap T|\geq |T|-1$ and $|N(y_{j-1})\cap T|\geq |T|-1$. Since
 $|T|\geq 3$, this implies that there is $x_i\in T\cap N(y_{k-1})\cap N(y_{j-1})$.

Since $G$ is $2$-connected, $z_1$ has a neighbor in $\{x_{k-1},x_{j-1}\}$. If $z_1x_{k-1}\in E(G)$, then $G$ has  cycle
$y_0x_1\ldots x_i y_{k-1}x_kz_1x_{k-1}y_{k-2}x_{k-2}\ldots x_{i+1}y_0$. So $N(z_1)=\{x_k,x_{j-1}\}$.
If $j\neq k-1$, then $G$ has the cycle
$y_0x_1\ldots x_{j-1}z_1x_k y_{j-1}x_jy_jx_{j+1}\ldots x_{k-1}y_0$. Hence we may suppose $j=k-1$. Then by the definition of $T$,
$i\leq k-3$. So  $G$ has the cycle
\[
	y_0x_1\ldots x_i y_{k-2}x_{k-1}y_{k-1}x_kz_1x_{k-2}y_{k-2}\ldots x_{i+1}y_0,
\]
a contradiction.
\end{proof}

A critical $(X,Y)$-bigraph $G$ is {\em $Y$-minimal} if every proper subgraph $G'=(X',Y';E')$ of $G$ satisfying~\eqref{lll} is super-cyclic.

\begin{lemma}\label{y2}  If  a saturated critical $Y$-minimal $(X,Y)$-bigraph $G$ 
has vertices $y_1, y_2 \in Y$ of degree $2$, then $N(y_1) \neq N(y_2)$.
\end{lemma}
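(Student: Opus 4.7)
The plan is to suppose for contradiction that $N(y_1) = N(y_2) = \{x_1, x_2\}$ and to derive the contradiction by examining the proper subgraph $G' := G - y_2$ in light of $Y$-minimality.

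First, if $G'$ satisfies~\eqref{lll}, then $G'$ is super-cyclic by $Y$-minimality, whence $G \supseteq G'$ is super-cyclic—contradicting criticality. So $G'$ fails~\eqref{lll}. Since $y_2 \in \widehat{N}_G(A)$ only when $\{x_1, x_2\} \subseteq A$, the failing $A$ must contain both $x_1$ and $x_2$. Moreover, I claim that $2$-connectivity of $G'[A \cup \widehat{N}_{G'}(A)]$ is automatic: the only potential obstruction is $\{y_1, y_2\}$ forming a $2$-cut in $G[A \cup \widehat{N}_G(A)]$, but by Lemma~\ref{common-neighbor}, for $|A| \ge 3$, every pair of $X$-vertices in $A$ shares a $Y$-neighbor distinct from $y_1, y_2$ (since $y_1, y_2$ connect only $x_1$ and $x_2$), placing all of $A$ and hence all of $\widehat{N}_G(A) \setminus \{y_1, y_2\}$ into a single component. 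Therefore, the only way $G'$ can violate~\eqref{lll} is a size failure: there exists $A \supseteq \{x_1, x_2\}$ with $|A| \ge 3$ and $|\widehat{N}_G(A)| = |A|$.

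Second, I split on whether $A \subsetneq X$ or $A = X$. If $A \subsetneq X$, criticality gives a cycle $C$ in $G[A \cup Y]$ based on $A$ using $|A|$ vertices of $Y$, all of which must lie in $\widehat{N}_G(A)$; since $|\widehat{N}_G(A)| = |A|$, $C$ uses every vertex of $\widehat{N}_G(A)$, including both twins $y_1, y_2$. Each twin's two cycle-neighbors must come from $\{x_1, x_2\}$, so $x_1$'s two cycle-neighbors are exactly $\{y_1, y_2\}$, as are $x_2$'s—forcing $C$ to be the $4$-cycle $x_1 y_1 x_2 y_2 x_1$ and contradicting $|A| \ge 3$. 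If instead $A = X$, then criticality condition (b) gives $|Y| = |\widehat{N}_G(X)| = |X|$. Criticality also makes $G[(X\setminus\{x_1\}) \cup Y]$ super-cyclic, so it contains a cycle based on $X \setminus \{x_1\}$ using $|X| - 1$ distinct $Y$-vertices, each with at least two neighbors in $X \setminus \{x_1\}$. But $y_1$ and $y_2$ each have only $x_2$ as a neighbor in $X \setminus \{x_1\}$, so they cannot appear on this cycle; consequently the $|X| - 1$ needed $Y$-vertices must lie in $Y \setminus \{y_1, y_2\}$, a set of size $|X| - 2$—contradiction.

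The main obstacle is the $2$-connectivity verification in the first step: one has to rule out that removing $y_2$ creates a cut vertex in the induced subgraph. The argument reduces to showing $\{y_1, y_2\}$ is never a $2$-vertex cut of $G[A \cup \widehat{N}_G(A)]$, which in turn uses Lemma~\ref{common-neighbor} in a subtle but short way: the ubiquity of common $Y$-neighbors outside the twin pair prevents any genuine disconnection. Once this is in place, the two-case finish via the forced $4$-cycle and the pigeonhole on $Y \setminus \{y_1, y_2\}$ is clean.
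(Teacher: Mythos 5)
Your proof is correct, but it takes a genuinely different route from the paper's. Both arguments set up $G' := G - y_i$ for one of the twins and invoke $Y$-minimality, so the heart of the matter is showing $G'$ still satisfies~\eqref{lll}, but the two proofs do this differently. For the potential size failure $|\widehat{N}_{G'}(A)| < |A|$, the paper splits on $|A|$: for $|A| \ge 4$ it applies~\eqref{lll} to $A - \{x_1\}$ and notes $\widehat{N}(A - \{x_1\}) \subseteq \widehat{N}(A) - \{y_1, y_2\}$, and for $|A| = 3$ it invokes Claim~\ref{x3}. You instead split on $A \subsetneq X$ versus $A = X$: for $A \subsetneq X$ you use criticality (c) to manufacture a cycle based on $A$ and force a $4$-cycle, and for $A = X$ you use criticality (b) plus a pigeonhole on $Y \setminus \{y_1, y_2\}$. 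For the potential $2$-connectivity failure, the paper again falls back on Claim~\ref{x3} (no $6$-cycle based on $\{x_1, x_2, x_3\}$ once $\{y_1, y_2\}$ separates $x_1$ from $x_2$), while you build an explicit connecting path through a third vertex $x_3$ via Lemma~\ref{common-neighbor}. The paper's route leans more on the already-proved $3$-cyclicity; yours leans more directly on criticality, and in that sense is a little more self-contained relative to Section~\ref{sec:critical}.

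One small imprecision in your write-up: you assert that ``every pair of $X$-vertices in $A$ shares a $Y$-neighbor distinct from $y_1, y_2$.'' That is not literally true for the pair $\{x_1, x_2\}$, whose only common neighbors could in principle be $y_1$ and $y_2$. What you actually use --- and what is true --- is that any $x_3 \in A \setminus \{x_1, x_2\}$ has a common neighbor with $x_1$ and a common neighbor with $x_2$, both necessarily outside $\{y_1, y_2\}$, so the path $x_1\,y'\,x_3\,y''\,x_2$ survives deletion of $\{y_1, y_2\}$; from there connectivity of the rest follows because every $y \in \widehat{N}_G(A) \setminus \{y_1,y_2\}$ has two neighbors in $A$. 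Rephrasing the sentence to say this would tighten the argument, but the underlying claim and conclusion are sound.
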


\begin{proof}
Suppose $N(y_1) = N(y_2) = \{x_1, x_2\}$, and consider the graph $G' := G - \{y_1\}$ with partite sets $X$ and $Y'= Y - \{y_1\}$. Note that in $G$, each cycle
of length at least $6$ contains at most one vertex in $\{y_1,y_2\}$ since the neighbors of such a vertex on the cycle must be exactly $x_1$ and $x_2$.  Hence for each cycle $C$ of length at least $6$ in $G$, there exists a cycle $C'$ in $G'$ with $C \cap X = C' \cap X$. 
We will show that~\eqref{lll} holds for $G'$.

Indeed, suppose there exists a set $A \subseteq X$ with $|\widehat{N}_{G'}(A)| < |A|$. Then $\{x_1, x_2\} \subseteq A$, $\widehat{N}_{G'}(A) = \widehat{N}_G(A) - \{y_1\}$, and hence $|\widehat{N}(A)| = |A|$. If $|A| \geq 4$, then $|\widehat{N}(A - \{x_1\})| \geq |A - \{x_1\}| = |A| - 1$. However, $\widehat{N}(A - \{x_1\}) \subseteq \widehat{N}(A) - \{y_1, y_2\}$, a contradiction. So $|A| = 3$, say $A = \{x_1, x_2, x_3\}$, $\widehat{N}_{G'}(A) = \{y_2, y_3\}$, and $\widehat{N}(A) =\{y_1, y_2, y_3\}$. But there is no 6-cycle in $G$ based on $A$ since $N(y_1) = N(y_2) = \{x_1, x_2\}$. This contradicts Claim~\ref{x3}.

Now suppose $G'$ is not 2-connected. Then  $G'$ contains a cut vertex $v$, and $\{v, y_1\}$ is a cut set in $G$. This implies that $x_1$ and $x_2$ are in different components of $G - \{v, y_1\}$, and so $v = y_2$.  Let $x_3 \in X - \{x_1, x_2\}$. Then there is no 6-cycle based on $\{x_1, x_2, x_3\}$ in $G$, a contradiction.

By the definition of critical $Y$-minimal bigraphs, $G'$ is super-cyclic; but then $G$ also is.
\end{proof}

\begin{lemma}\label{C-2}  If $G$ is a saturated critical $Y$-minimal $(X,Y)$-bigraph, $x\in X$ and $C$ is  a cycle  based on  $X-\{x\}$, then $x$ has at least two non-neighbors in $V(C) \cap Y$.
\end{lemma}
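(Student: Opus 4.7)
The plan is to prove Lemma~\ref{C-2} by contradiction: let $x_0$ denote $x$, and suppose $x_0$ has at most one non-neighbor in $V(C) \cap Y$, which we denote $y_j$ when it exists. Write $C = x_1 y_1 x_2 y_2 \dots x_k y_k x_1$ with $|X| = k+1$ and indices taken cyclically. The goal is either to construct a cycle in $G$ based on $X$ (contradicting criticality of $G$) or to invoke Lemma~\ref{y2}.

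First I would dispose of the case that $x_0$ has a neighbor $y' \in Y - V(C)$. By Claim~\ref{crit2}, $d(y') \geq 2$, so $y'$ has a neighbor $x_a \in X - \{x_0\}$. If $a \neq j$, I replace the subpath $x_a y_a x_{a+1}$ of $C$ by $x_a y' x_0 y_a x_{a+1}$. Otherwise $N(y') \cap (X - \{x_0\}) = \{x_j\}$, and I instead replace the edge $y_{j-1} x_j$ of $C$ by the path $y_{j-1} x_0 y' x_j$ (valid since $j - 1 \neq j$ gives $y_{j-1} \in N(x_0)$). In either case a cycle based on $X$ results, a contradiction.

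The core of the argument is the case $N(x_0) \subseteq V(C) \cap Y$. Since $\widehat{N}(X) = Y$ yields $|Y| \geq |X| > k$, there exists $y' \in Y - V(C)$, and the current assumption forces $y'$ to have at least two neighbors in $X - \{x_0\}$. Applying both halves of Claim~\ref{no-con-1} to the indices $i \in [k] - \{j\}$ (or all of $[k]$ if no $y_j$ exists), I conclude that two distinct $x_m, x_{m'}$ can share a common neighbor outside $V(C)$ only if $\{m, m'\} = \{j, j+1\}$. Consequently $N(y') = \{x_j, x_{j+1}\}$; if no $y_j$ exists, this already contradicts $|N(y')| \geq 2$.

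Finally I would compare $y'$ with $y_j$, whose $C$-neighbors are $x_j$ and $x_{j+1}$. If $d(y_j) = 2$ then $N(y_j) = N(y')$, and Lemma~\ref{y2} delivers the contradiction. Otherwise $d(y_j) \geq 3$, so $y_j$ has a neighbor $x_c$ with $c \notin \{0, j, j+1\}$; form $C' = x_1 y_1 \dots x_j y' x_{j+1} \dots x_k y_k x_1$ by swapping $y_j$ with $y'$. This $C'$ is a cycle based on $X - \{x_0\}$ with $y_j \in Y - V(C')$, and $x_0$'s unique non-neighbor in $V(C') \cap Y$ is $y'$, still occupying the ``$j$-th position''. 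Re-running the preceding paragraph's argument with $C'$ in place of $C$ forces $N(y_j) = \{x_j, x_{j+1}\}$, contradicting $x_c \in N(y_j)$. The main obstacle I anticipate is this last step: one must recognize that Claim~\ref{no-con-1} applies verbatim to any cycle based on $X - \{x_0\}$, so its structural restriction can be iterated on $C'$ to rule out the extra neighbor $x_c$.
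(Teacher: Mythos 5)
Your proof is correct and follows essentially the same route as the paper's: first rule out a neighbor of $x$ outside $C$ by a local path-swap, then show that any $y' \in Y - V(C)$ must have $N(y') = \{x_j, x_{j+1}\}$ (the paper constructs the needed cycles inline; you invoke Claim~\ref{no-con-1}, which packages the same cycle constructions), and finally swap $y'$ with $y_j$ to force $N(y_j) = \{x_j,x_{j+1}\}$ and contradict Lemma~\ref{y2}. Your explicit split on $d(y_j) = 2$ versus $d(y_j) \ge 3$ is a minor stylistic elaboration of the paper's ``by symmetry'' step, not a different argument.
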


\begin{proof}
Let $|X|=k$ and let $C = x_1 y_1 \ldots x_{k-1} y_{k-1} x_1$. Suppose for the sake of contradiction that $|N(x) \cap V(C)| \geq  k-2$. If $N(x)$ contains a vertex $y$ that is not in $C$, then because $G$ is 2-connected,  $y$ has a neighbor in $V(C)$, say $x_1$. Then without loss of generality, $y_1 \in N(x)$, and we may replace the edge $x_1y_1$ in $C$ with the path $x_1 y x y_1$ to obtain a cycle  based on  $X$, a contradiction.

So we may assume $N(x) \subseteq V(C)$. Since $|\widehat{N}(X)| \geq |X|$, there exists a vertex $y \in \widehat{N}(X) \setminus V(C)$.
Since $G$ is $2$-connected and $yx\notin E(G)$,  $y$ has some neighbors $x_i$ and $x_j$ in $C$. If $\{y_{i+1}, y_{j+1}\} \subseteq N(x)$ then we obtain the cycle $x_1 y_1 \ldots x_i y x_j y_{j-1} \ldots y_{i+1} x y_{j+1} x_{j+2} \ldots x_1$, a contradiction. Similarly, we have that $\{y_{i-1}, y_{j-1}\}\not \subseteq N(x)$. The remaining case is $N(y) = \{x_i, x_{i+1}\}$ and $N(x) = V(C) - \{y_{i}\}$. By considering instead the cycle obtained by replacing $y_{i}$ with $y$, we see that by symmetry, $N(y_{i}) = \{x_i, x_{i+1}\}$. But this contradicts Lemma~\ref{y2}.
\end{proof} 

\section{$6$-cyclic graphs}\label{sec:k-6}

In this section, we complete the proof of Theorem~\ref{Xk} by proving that all $(X,Y)$-bigraphs satisfying~\eqref{lll} are $6$-cyclic.

\begin{lemma}\label{degree-4}
If $G$ is a saturated critical $(X,Y)$-bigraph and $|X|=6$, then $X$ contains a vertex of degree at least $4$.
\end{lemma}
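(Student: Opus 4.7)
The plan is to assume for contradiction that $d(x) \le 3$ for every $x \in X$ and to construct a cycle of length $12$ based on $X$, contradicting the criticality of $G$. By Claim~\ref{crit2}, $d(x) \ge 2$ for every $x \in X$, and by Lemma~\ref{l5}(b), if some $x \in X$ had $d(x) = 2$ then every other vertex of $X$ would have degree at least $4$. Combined with our assumption, this forces $d(x) = 3$ for all $x \in X$, so $|E(G)| = 18$. By Lemmas~\ref{5c} and~\ref{-2}, together with $\widehat{N}(X) = Y$ (which implies $d(y) \ge 2$), every $y \in Y$ satisfies $d(y) \in \{2, 3, 6\}$.

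Fix any $x_0 \in X$. Since $G$ is critical, $G - x_0$ is super-cyclic and contains a $10$-cycle $C = x_1 y_1 x_2 y_2 x_3 y_3 x_4 y_4 x_5 y_5 x_1$ based on $X - \{x_0\}$. By Lemma~\ref{two-neighbors}, $|N(x_0) \cap V(C)| \ge 2$; since $d(x_0) = 3$, either (A) $N(x_0) \subseteq V(C)$, or (B) $x_0$ has exactly two neighbors on $C$ and one neighbor $y_0 \in Y \setminus V(C)$. In each case the goal is to construct a $12$-cycle based on $X$.

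In Case (B), $d(y_0) \ge 2$ forces $y_0 \sim x_k$ for some $x_k \in X - \{x_0\}$. If $y_{k-1}$ or $y_k$ lies in $N(x_0)$, we replace the $C$-edge $y_{k-1} x_k$ (respectively $x_k y_k$) by the length-$3$ detour through $x_0 y_0$, producing a $12$-cycle based on $X$. Hence for every $x_k \sim y_0$, the two $C$-neighbors of $x_0$ must avoid $\{y_{k-1}, y_k\}$. Combining this tight position constraint with Claims~\ref{no-con-1}, \ref{no-con-2}, \ref{no-con-3} and the allowed degrees $\{2, 3, 6\}$ in $Y$, a short case analysis rules out all compatible configurations.

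In Case (A), write $N(x_0) = \{y_a, y_b, y_c\} \subseteq V(C)$. Each of these has $d(y) \ge 3$, so $d(y) \in \{3, 6\}$. If some $y \in N(x_0)$ satisfies $d(y) = 6$, then $y$ is adjacent to every vertex of $X$, and one uses $y$ as a hub to reroute $C$ through $x_0$, producing a $12$-cycle. Otherwise all three have degree exactly $3$, so $N(y_j) = \{x_0, x_j, x_{j+1}\}$ for $j \in \{a, b, c\}$; each remaining $x_i$ has a unique ``third neighbor'' $z_i \in Y \setminus \{y_{i-1}, y_i\}$. The $Y$-degree restriction together with Claims~\ref{no-con-1}, \ref{no-con-2}, \ref{no-con-3} sharply limits where each $z_i$ can lie, and combining some $z_i$ with $x_0$ and rerouting $C$ yields a $12$-cycle based on $X$. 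The main obstacle is this last subcase, since $N(x_0) \subseteq V(C)$ rules out the standard ``insert $x_0$ together with a fresh $y \in Y \setminus V(C)$ adjacent to $x_0$'' extension; instead one must track the cyclic positions of $N(x_0)$ on $C$ and the placements of the $z_i$'s carefully to produce the required cycle.
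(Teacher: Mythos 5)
Your setup is sound: assuming $\Delta(X)\le 3$, Lemma~\ref{l5}(b) forces $d(x)=3$ for all $x\in X$, and Lemmas~\ref{5c} and~\ref{-2} (together with $Y=\widehat N(X)$) force $d(y)\in\{2,3,6\}$. However, your proof then departs substantially from the paper's and, more importantly, leaves the real work undone. The paper splits on whether $Y$ contains a vertex of degree $\ge 4$ (forcing degree $\ge 5$ by Lemma~\ref{-2}), and in both branches it derives a contradiction mainly from the super-neighborhood condition~\eqref{lll} --- e.g., exhibiting a triple $A$ with $|\widehat N(A)|=2$, or showing $|\widehat N(X)|\le 5 <|X|$ --- rather than by constructing a $12$-cycle directly. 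Your proposal instead fixes $x_0$, splits on whether $N(x_0)\subseteq V(C)$, and aims to build a $12$-cycle based on $X$ in every branch. That is a legitimate route in principle, but it is also exactly the route the paper's authors avoided, presumably because the resulting case analysis is heavier than the degree-counting one.

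The concrete gaps are in the places you wave off. In Case (B), after observing that the two $C$-neighbors of $x_0$ must avoid $\{y_{k-1},y_k\}$ for every $x_k\sim y_0$, you assert ``a short case analysis rules out all compatible configurations''; but the set of surviving configurations (up to symmetry, $N_C(x_0)$ at cyclic distance $2$ or $4$ apart, combined with the possible $N(y_0)$) is not obviously empty, and for $|X|=6$ the analogous step in Claim~\ref{x5} already required nontrivial use of~\eqref{lll}, not just cycle surgery. In Case (A), the subcase where some $y\in N(x_0)$ has degree $6$ is asserted to yield a $12$-cycle by ``using $y$ as a hub,'' but $N(x_0)\subseteq V(C)$ forces both cycle-neighbors of $x_0$ in any putative $12$-cycle to be reused vertices of $V(C)\cap Y$, so you must also find and correctly place a sixth $Y$-vertex $y'\notin V(C)$ while keeping all of $x_1,\dots,x_5$; the mere existence of a degree-$6$ hub does not make this automatic. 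The all-degree-$3$ subcase is likewise left entirely to ``tracking positions carefully.'' As written, the proposal identifies the correct battlefield and the correct constraints, but does not actually fight the battle; the steps you omit are precisely the ones where the lemma's content lives, so this cannot be accepted as a proof without those case analyses being carried out (or replaced by an~\eqref{lll}-based contradiction as in the paper).
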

\begin{proof}
Suppose all vertices in $X$ have degree at most $3$.

\textbf{Case 1:} There is a vertex $y \in Y$ with $d(y) \ge 4$. 
By Lemma~\ref{-2}, $d(y) \ne 4$, so $d(y) \ge 5$. Let $x_1, x_2, x_3, x_4, x_5$ be five neighbors of $y$; let $C = x_1y_1x_2y_2x_3y_3x_4y_4x_5y_5x_1$ be a cycle containing them.

If $y \notin V(C)$, then $x_1, x_2, x_3, x_4, x_5$ have two neighbors on $C$ and an edge to $y$, so they have degree $3$ and cannot have any other neighbors. In that case, the set $A = \{x_1, x_2, x_4\}$ contradicts~\eqref{lll}, since $\widehat{N}(A) = \{y_1, y\}$. 

Therefore $y \in V(C)$; say, $y = y_1$. Then $x_3, x_4, x_5$ have two neighbors on $C$ and an edge to $y_1$, so they have degree $3$. By~\eqref{lll} applied to $A = \{x_1, x_3, x_5\}$, $x_1$ must have an edge to one of $y_2$, $y_3$, $y_4$; symmetrically, $x_2$ must have an edge to one of $y_3$, $y_4$, $y_5$. This yields $3$ edges incident to each of $x_1, x_2, x_3, x_4, x_5$; none of these can have any other neighbors.

By~\eqref{lll},  $|\widehat{N}(X)| \geq 6$; however, since there is only one vertex in $X - V(C)$, $\widehat{N}(X) \subseteq N(X \cap V(C)) = Y \cap V(C)$. This only has size $5$, a contradiction.

\textbf{Case 2:} All vertices in $Y$ have degree at most $3$.
Let $X = \{x_1, x_2, x_3, x_4, x_5, x_6\}$. Let $C_1 = x_1y_1x_2y_2x_3y_3x_1$ be a $6$-cycle based on $\{x_1, x_2, x_3\}$ and let $C_2 = x_4y_4x_5y_5x_6y_6x_4$ be a $6$-cycle based on $\{x_4, x_5, x_6\}$. We have $V(C_1) \cap V(C_2) = \emptyset$, since a vertex in $V(C_1) \cap V(C_2)$ would have degree at least $4$.

In the cycle based on $\{x_1, x_2, x_4\}$, the vertex $x_4$ must have two common neighbors with $\{x_1, x_2\}$. Since $\Delta(G) \le 3$, at least one of them is a neighbor of $x_4$ on $C_2$. Without loss of generality, let $y_4$ be a common neighbor with $x_1$, so that $x_1y_4 \in E(G)$.

Now consider the cycle based on $\{x_1, x_4, x_5\}$. By the same argument, either $x_4$ or $x_5$ must be adjacent to one of $x_1$'s neighbors on $C_1$. Without loss of generality, let $x_4 y_1$ be that edge; then the cycle $x_1 y_4 x_5 y_5 x_6 y_6 x_4 y_1 x_2 y_2 x_3 y_3 x_1$ is based on  $X$, a contradiction. 
\end{proof}

\begin{claim}\label{x6}All  $(X,Y)$-bigraphs $G$ satisfying~\eqref{lll} are $6$-cyclic. 
\end{claim}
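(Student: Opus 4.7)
Following the pattern of Claims~\ref{x3}, \ref{x4}, and~\ref{x5}, suppose the claim fails and take a vertex-minimal counter-example, so that $|X|=6$ and $Y=\widehat{N}(X)$, making $G$ critical. One may further assume, by appropriate extremality, that $G$ is both saturated and $Y$-minimal, so that the lemmas of Section~\ref{sec:saturated} apply. By Lemma~\ref{degree-4} there is $x_0\in X$ with $d(x_0)\geq 4$, and by Claim~\ref{x5} applied to $G[(X\setminus\{x_0\})\cup Y]$ there is a $10$-cycle $C = x_1y_1x_2y_2x_3y_3x_4y_4x_5y_5x_1$ based on $X\setminus\{x_0\}$. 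By Lemma~\ref{C-2}, $|N(x_0)\cap V(C)|\leq 3$, so there is $y_0\in N(x_0)\setminus V(C)$, and $y_0$ has a neighbor in $\{x_1,\ldots,x_5\}$ since $d(y_0)\geq 2$.

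Let $S:=N(x_0)\cap V(C)$ and $T_0:=N(y_0)\cap V(C)$. The key extension is the following swap: whenever there exist $y_i\in S$ and $x_j\in T_0$ with $j\in\{i,i+1\}\pmod 5$, replacing the edge $x_j y_i$ of $C$ by the length-$3$ path $x_j\, y_0\, x_0\, y_i$ produces a $12$-cycle based on $X$, a contradiction. Since $|S|\in\{2,3\}$ by Lemma~\ref{two-neighbors} and Lemma~\ref{C-2}, a short case check shows that the only configurations blocking every such swap (up to cyclic relabeling of $C$) are:
(i) $|S|=3$ consecutive, $S=\{y_1,y_2,y_3\}$, forcing $N(y_0)=\{x_0,x_5\}$;
(ii) $|S|=2$ consecutive, $S=\{y_1,y_2\}$, forcing $T_0\subseteq\{x_4,x_5\}$;
(iii) $|S|=2$ non-consecutive, $S=\{y_1,y_3\}$, forcing $N(y_0)=\{x_0,x_5\}$.

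In (iii), and in (i) when $d(x_0)\geq 5$, the degree condition forces two distinct outside neighbors $y_0,y_0'$ of $x_0$, both satisfying $N(\cdot)=\{x_0,x_5\}$, and Lemma~\ref{y2} gives a contradiction. In (ii), again $|N(x_0)\setminus V(C)|\geq 2$; if there are distinct $y,y'\in N(x_0)\setminus V(C)$ with $x_4\in N(y)$ and $x_5\in N(y')$, the $12$-cycle
\[
  x_0\, y\, x_4\, y_3\, x_3\, y_2\, x_2\, y_1\, x_1\, y_5\, x_5\, y'\, x_0
\]
is based on $X$; otherwise all outside neighbors of $x_0$ share a common degree-$2$ neighborhood $\{x_0,x_4\}$ or $\{x_0,x_5\}$, contradicting Lemma~\ref{y2}.

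The principal obstacle is case (i) with $d(x_0)=4$, in which $x_0$ has only one outside neighbor $y_0$ and Lemma~\ref{y2} cannot be invoked directly. My plan is to combine Lemmas~\ref{5c} and~\ref{-2} (which, because $|X|=6$, restrict every $y\in Y$ to degree in $\{2,3,6\}$) with Lemma~\ref{l5} and the degree-sum identity $\sum_{x\in X}d(x)=\sum_{y\in Y}d(y)$ to show that either some $y_i\in\{y_1,y_2,y_3\}$ must have degree $6$, whence $y_i\sim x_5$ and a direct rerouting of $C$ through $x_0$, $y_0$, and $y_i$ yields a $12$-cycle; or else the structure of $G$ is rigid enough that~\eqref{lll} applied to a triple containing $\{x_0,x_5\}$ produces an extra $Y$-vertex on which the swap succeeds. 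As a fallback, super-cyclicity of $G-x_0$ can be used to find an alternative $10$-cycle $C'$ based on $X\setminus\{x_0\}$ in which $y_1,y_2,y_3$ lie non-consecutively on $C'$, reducing (i) to an already-handled configuration.
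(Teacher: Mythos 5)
Your overall strategy matches the paper's: reduce to a critical, saturated, $Y$-minimal graph with $|X|=6$, invoke Lemma~\ref{degree-4}, take a $10$-cycle $C$ on $X\setminus\{x_0\}$, and classify the position of $N(x_0)$ on $C$ into the three non-trivial patterns. Your handling of cases (ii) and (iii), and of case (i) when $d(x_0)\ge 5$, is essentially the same as the paper's: in each of these $x_0$ has at least two neighbors outside $C$, each is forced to have neighborhood $\{x_0,x_5\}$ (respectively $\subseteq\{x_0,x_4,x_5\}$), and either a direct rerouting or Lemma~\ref{y2} (for (ii), also Claim~\ref{no-con-3}) finishes.

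The genuine gap is exactly where you flag it: case (i) with $d(x_0)=4$, i.e.\ $N(x_0)=\{y_1,y_2,y_3,y_0\}$ with $N(y_0)=\{x_0,x_5\}$ and only one outside neighbor, so Lemma~\ref{y2} is unavailable. The two ideas you sketch do not obviously close it. The degree-sum plan runs into the difficulty that Lemmas~\ref{5c} and~\ref{-2} only exclude degrees $|X|-1$ and $|X|-2$ in $Y$, so $Y$-degrees can lie in $\{2,3,6\}$, and nothing prevents $|Y|$ from being large enough that the counting is inconclusive; and the fallback of rerouting to a different $10$-cycle $C'$ in which the three $C$-neighbors of $x_0$ become non-consecutive is asserted, not proved — it is not clear such a $C'$ exists. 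The missing ingredient is to choose $x_0$ from the start to have \emph{maximum} degree in $X$ (you only invoke Lemma~\ref{degree-4} to get degree $\ge 4$, without the maximality). With that choice, the case resolves cleanly: after deducing $N(x_0)=\{y_1,y_2,y_3,y_0\}$ and $N(y_0)=\{x_5,x_0\}$, the rerouting argument shows none of $x_2y_4$, $x_2y_5$, $x_3y_4$, $x_3y_5$ is an edge; then Claim~\ref{x3} applied to $\{x_2,x_3,x_5\}$ produces distinct $y'\in N(x_2)\cap N(x_5)$ and $y''\in N(x_3)\cap N(x_5)$, necessarily outside $\{y_4,y_5,y_0\}$; hence $x_5$ is adjacent to the five distinct vertices $y_4,y_5,y_0,y',y''$, giving $d(x_5)\ge 5 > 4 = d(x_0)$ and contradicting the maximality of $d(x_0)$. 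Inserting this choice and this degree-comparison step completes your proof.
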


\begin{proof}
Take a vertex-minimal counterexample $G$ with the most edges, meaning in particular that $|X|=6$ and $Y = \widehat{N}(X)$. By Claims~\ref{x3}--\ref{x5}, $G$ is $k$-cyclic for $3 \le k \le 5$; therefore $G$ is critical, saturated and $Y$-minimal.

Let $X=\{x_1,\ldots,x_6\}$ and $x_6$ be a vertex of maximum degree in $X$. By Lemma~\ref{degree-4}, $d(x_6) \ge 4$. Let $C = x_1y_1x_2y_2x_3y_3x_4y_4x_5y_5x_1$ be a cycle based on $X - \{x_6\}$. By Lemma~\ref{two-neighbors} and Lemma~\ref{C-2}, $x_6$ has either $2$ or $3$ neighbors on $C$, so it has at least one neighbor $y_6$ not on $C$.

By symmetry, the following two cases are exhaustive.

\textbf{Case 1:} $\{y_1, y_3\} \subseteq N_C(x_6)$.
In this case, by Claim~\ref{no-con-2}, no vertex $y \in N(x_6) - V(C)$ can be adjacent to $x_1$, $x_2$, $x_3$, or $x_4$, so it must be adjacent to $x_5$ and $x_6$ only. By Lemma~\ref{y2}, $y_6$ is the only such vertex. By Claim~\ref{no-con-2} again, $x_6$ cannot be adjacent to $y_4$ or $y_5$, and to have degree $4$ it must also be adjacent to $y_2$.

If $x_2y_4 \in E(G)$, then the cycle $x_2y_4 x_4 \ldots y_2 x_6y_6 x_5 y_5x_1y_1x_2$ is based on  $X$, and if $x_2 y_5 \in E(G)$, the cycle $x_2 y_2 \ldots x_5 y_6 x_6 y_1 x_1y_5 x_2$ is based on  $X$. A similar argument shows that $x_3y_5, x_3y_4 \notin E(G)$. However, applying
Claim~\ref{x3} to $A = \{x_2, x_3, x_5\}$, we find distinct vertices $y' \in N(x_2) \cap N(x_5)$ and $y'' \in N(x_3) \cap N(x_5)$.  Therefore $x_5$ is adjacent to $y_4, y_5, y_6, y', y''$, and $d(x_5) \ge 5 > d(x_6)$, contradicting the choice of $x_6$.

\textbf{Case 2:} $N_C(x_6) = \{y_1, y_2\}$.
In this case, in order to have degree $4$, $x_6$ must have neighbors $y, y'$ outside $C$. By Claim~\ref{no-con-2}, $y$ and $y'$ can only have $x_4$ and $x_5$ as neighbors. If $y$ is adjacent to $x_4$ and $y'$ is adjacent to $x_5$, or vice versa, we contradict Claim~\ref{no-con-3}; if both are adjacent only to $x_4$ or both only to $x_5$, we contradict Lemma~\ref{y2}.
\end{proof}

\section{Bigraphs with high minimum degree}\label{sec:main-proof}

\subsection{Properties of smallest counterexamples}
Throughout this subsection, we assume that $G$ is a vertex-minimal counterexample to Theorem~\ref{mainpan3} with the most edges; let $G \in \cG(n,m,\delta)$. Then for each $X'\subset X$ with $X' \neq X$, $G[X' \cup Y]$ also satisfies the conditions of Theorem~\ref{mainpan3} and hence is super-cyclic.

Let $G'=G[X \cup \widehat{N}(X)]$, i.e., $G'$ is obtained by removing only the degree-$1$ vertices of $G$. Then $G'$ is critical and saturated. In particular, for every $x \in X$, there exists a cycle $C$, in $G'$ and therefore in $G$, such that $V(C)\cap X = X - \{x\}$. Because $G'$ is $2$-connected, there exists an $x, C$-{\em fan} $F$ of size $t \geq 2$, i.e., a set of $t$ paths from $x$ to $V(C)$ such that any two of them have only  $x$ in common.

Among all such triples $(C, x, F)$, choose a triple such that $t = |V(C) \cap V(F)|$ is maximized, and subject to this, $|V(F)|$ is minimized. Let 
$|V(C)| = 2\ell$ (so $|X| = \ell + 1)$. Fix a clockwise direction of $C$ and let $T = V(C) \cap V(F)=\{u_1,\ldots,u_t\}$. 

For every vertex $u$ of $C$, $x^+_C(u)$ (respectively,  $x^-_C(u)$) denotes the closest to $u$ clockwise (respectively, counterclockwise) vertex of $X$
distinct from $u$. 
For a set $U\subset V(C)$, $X_C^+(U)=\{x^+_C(u)\; : \;u\in U\}$. When $C$ is clear from the content, the subscripts could be omitted.
The vertices $ y^+(u),y^-(u)$ and the sets $X^-(U),Y^+(U),Y^-(U)$ are defined similarly. 

Viewing $F$ as a tree (spider) with root $x$, any two vertices $u,v\in V(F)$ define the unique $u,v$-path $F[u,v]$ in $F$. For $u,v\in V(C)$, let $C[u,v]$ be  the clockwise $u,v$-path in $C$ and let $C^-[u,v]$ be  the counterclockwise $u,v$-path in $C$.

\begin{lem}\label{tX'} $t \leq \ell-2$.
\end{lem}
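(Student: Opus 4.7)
Proof plan: Suppose $t\ge\ell-1$; I aim to derive a contradiction by constructing a cycle based on all of $X$, contradicting that $G$ is a counterexample. First I observe that each fan path from $x$ to $V(C)$ has length $1$ or $2$: internal vertices of $F$ must lie in $V(G')\setminus V(C)$, and since the only $X$-vertex outside $V(C)$ is $x$ itself, the bipartite alternation from $x$ forces each path to be either a direct edge $x\,y$ with $y\in V(C)\cap Y$ (a ``$Y$-foot''), or of the form $x\,z\,u$ with $u\in V(C)\cap X$ (an ``$X$-foot'') and $z\in\widehat N(X)\setminus V(C)$. Let $a$ and $b$ count the $X$- and $Y$-feet, so $t=a+b$.

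The key structural observations are two splicing moves. If two $X$-feet $u_i,u_j$ are consecutive $X$-vertices of $C$ (with one $y\in V(C)\cap Y$ between them), replacing the arc $u_i\,y\,u_j$ by $u_i\,z_i\,x\,z_j\,u_j$ yields a cycle based on $X$. If a $Y$-foot $y$ is a $C$-neighbor of some $X$-foot $u_j$, replacing the $C$-edge $u_j\,y$ by $u_j\,z_j\,x\,y$ yields a cycle based on $X$. Since $G$ has no such cycle, the fan must satisfy: no two $X$-feet are consecutive on $C$, and no $Y$-foot is $C$-adjacent to any $X$-foot. The first forces $a\le\lfloor\ell/2\rfloor$ and restricts the $Y$-feet to the $\ell-2a$ cyclic positions on $C$ whose two $X$-$C$-neighbors are both non-feet, giving $b\le\ell-2a$ and so $t\le\ell-a$ for $a\ge1$. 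Combined with $b\le|N(x)\cap V(C)\cap Y|\le\ell-2$ from Lemma~\ref{C-2}, this yields $t\le\ell-2$ whenever $a\ge2$ or $a=0$.

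The only remaining case is $a=1,\ t=\ell-1$, which is the main obstacle. Here, let $x_k$ be the unique $X$-foot with internal vertex $z_k$; the tight bounds force $N(x)\cap V(C)\cap Y=(V(C)\cap Y)\setminus\{y^-(x_k),y^+(x_k)\}$. A fan-rerouting argument (if $z_k$ had another $X$-neighbor $x_j$, the fan with path $x\,z_k\,x_j$ has the same size but re-triggers the second splicing obstruction at the new foot $x_j$) forces $N(z_k)\cap X=\{x,x_k\}$; a similar argument together with Lemma~\ref{y2} shows that $z_k$ is the unique element of $N(x)\cap\widehat N(X)\setminus V(C)$, so any other $y^*\in\widehat N(X)\setminus V(C)$ is a chord between two vertices of $V(C)\cap X$ with $x\notin N(y^*)$. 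Since $|\widehat N(X)|\ge\ell+1=|(V(C)\cap Y)\cup\{z_k\}|$, either such a chord $y^*$ exists, or $\widehat N(X)=(V(C)\cap Y)\cup\{z_k\}$.

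I would handle these two subcases separately. In the first, after suitable relabeling one may arrange $y^*$ to be adjacent to $x_{k-1}$ (or $x_{k+1}$ by symmetry) and some $x_p$, so that the cycle
\[
x_k\,y_k\,x_{k+1}\,y_{k+1}\cdots x_p\,y^*\,x_{k-1}\,y_{k-2}\,x_{k-2}\cdots x_{p+1}\,y_p\,x\,z_k\,x_k
\]
(with $p$ chosen in $[\ell]\setminus\{k-1,k\}$ so that $y_p\in N(x)$ and the two $C$-arcs partition $X\setminus\{x_k\}$) is based on $X$, a contradiction. In the second subcase any cycle based on $X$ must use all of $V(C)\cap Y$ and also $z_k$; then each of $z_k,\,y^-(x_k),\,y^+(x_k)$ is a distinct $Y$-vertex whose $X$-neighbors (forced by Lemmas~\ref{5c}, \ref{-2}, and \ref{y2} to be degree $2$ with the listed neighborhoods) all include $x_k$, so all three demand $x_k$ as one of their two cycle-$X$-neighbors, overloading the two cycle-slots at $x_k$ and yielding a structural contradiction. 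The hardest part is case (i): verifying that indices $p$ and $j$ can be chosen compatibly no matter where $y^*$ lands (possibly invoking a symmetric construction based on $x_{k+1}$, or shifting to a cycle $C'$ based on $X\setminus\{x_k\}$ and re-maximizing the fan to obtain a strictly larger $t$), and in case (ii) using Lemma~\ref{y2} to preclude duplicate-neighborhood configurations.
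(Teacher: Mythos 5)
Your framework—fan paths have length $1$ or $2$; $X$-feet and $Y$-feet; no two $X$-feet consecutive on $C$; no $Y$-foot $C$-adjacent to an $X$-foot; hence $t\le \ell-a$—is exactly the paper's argument establishing its inequality $t\le\ell-|T\cap X|$, so your case $a\ge 2$ is sound. The gaps are in closing the cases $a\le 1$.

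First, you invoke Lemma~\ref{C-2} (to finish $a=0$) and Lemma~\ref{y2} (in $a=1$, subcase~(ii)). Both of those lemmas are stated for saturated critical \emph{$Y$-minimal} bigraphs, and $Y$-minimality of $G'$ is not available in this section. Here $G$ is a vertex-minimum counterexample to Theorem~\ref{mainpan3}, from which the paper concludes only that $G'$ is critical and saturated. $Y$-minimality would require that every proper subgraph of $G'$ satisfying~\eqref{lll} is super-cyclic, but deleting a $Y$-vertex (as the proof of Lemma~\ref{y2} does) can destroy the degree hypothesis $\delta\ge(m+10)/4$, so the minimality of $G$ does not transfer. (Contrast this with Claim~\ref{x6}: Theorem~\ref{Xk} has no degree bound, so there a vertex-minimal, edge-maximal counterexample \emph{is} $Y$-minimal, which is why the paper can use Lemma~\ref{y2} and Lemma~\ref{C-2} in that proof but not here.) The paper avoids those lemmas entirely and closes the $a\le 1$ cases directly from Claims~\ref{no-con-1}--\ref{no-con-3} together with $|\widehat{N}(A)|\ge|A|$; for instance, when $a=0$ and $t\ge\ell-1$, every vertex of $\widehat{N}(X)-V(C)$ is forced to have neighborhood exactly $\{x_1,x_\ell\}$, and swapping $y_\ell$ for one such vertex produces a new cycle for which a second such vertex (which exists because $|\widehat{N}(X)|\ge\ell+1$) violates the Claims.

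Second, you acknowledge that $a=1$, subcase~(i) is not verified (``The hardest part is case~(i): verifying that indices $p$ and $j$ can be chosen compatibly\ldots''). The paper handles $|T\cap X|=1$ cleanly by applying~\eqref{lll} to $A=X-\{x_\ell\}$ to get $|\widehat{N}(A)|\ge\ell$, extracting a vertex $y_0\in(Y-V(C))\cup\{y_{\ell-1},y_\ell\}$ with two neighbors in $A$, and then letting Claims~\ref{no-con-1}--\ref{no-con-3} plus one explicit cycle finish each possible position of $y_0$. That route bypasses the index-chasing difficulty you flag, so you should replace your subcase~(i) sketch with it.
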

\begin{proof}
We first show that
\begin{equation}
	\label{part-i}
	t \leq \ell - |T \cap X|.
\end{equation}
If $w\in T\cap X$ and $y^+(w)\in T$, then the cycle $wF[w,y^+(w)]y^+(w)C[y^+(w),w]w$ is  based on $X$, a contradiction. Similarly,
$y^-(w),x^+(w),x^-(w)\notin T$. Thus, $|T\cap X|\leq \ell/2$ and $|T\cap Y|\leq \ell-2|T\cap X|$. This proves~\eqref{part-i}.

For the remainder of the proof, note that if Claims~\ref{no-con-1}--\ref{no-con-3} are applied to $G'$, then the conclusions hold for $G$ as well, since they are unaffected by the addition of vertices of degree $1$ in $Y$.

Let $C =x_1 y_1 \ldots x_\ell y_\ell x_1$, and suppose  $t\geq \ell-1$.  By~\eqref{part-i}, $|T\cap X|\leq 1$. If  $T\cap X=\emptyset$, we may assume that $xy_i\in E(G)$ for all $1\leq i\leq \ell-1$.
By~\eqref{lll}, $|\widehat{N}(X)|\geq \ell+1$, so there is $y\in Y-V(C)$ with at least two neighbors in $X$.
This will contradict one of  Claims~\ref{no-con-1}--\ref{no-con-3} (possibly, in reversed orientation of $C$), unless all such $y$ are adjacent to only $x_{\ell}=x^-(y_\ell)$ and $x_1=x^+(y_\ell)$. Fix such a vertex $y$. Let $A = X - \{x_\ell\}$. There exists a vertex $y' \in (Y - V(C)) \cup \{y_{\ell}\}$ such that $y' \in \widehat{N}(A)$, i.e., $y'$ has two neighbors other than $x_{\ell}$ (so $y' \neq y$). Let $C'$ be the cycle obtained by replacing $y_{\ell}$ with $y$. Then the vertex $y'$ violates one of  Claims~\ref{no-con-1}--\ref{no-con-3} with respect to $C'$.

If $|T\cap X|= 1$, then by~\eqref{part-i}, we may assume that  $xy_i\in E(G)$ for all $1\leq i\leq \ell-2$ and that $x$ has a common neighbor $y\in Y-V(C)$ with $x_\ell$.
By~\eqref{lll}, $|\widehat{N}(X-x_\ell)|\geq \ell$, so there is $y_0\in (Y-V(C))\cup \{y_{\ell-1},y_\ell\}$ with at least two neighbors in $X-x_\ell$.
If $y_0\in (Y-V(C))$, this again will contradict one of  Claims~\ref{no-con-1}--\ref{no-con-3}, unless $N(y_0) = \{x_{\ell-1}, x_1\}$. In this case, we obtain the longer cycle $y_1C[y_1, y_{\ell-2}]y_{\ell-2}xyx_\ell y_{\ell-1}x_{\ell-1}y_0 x_1$. So suppose without loss of generality $y_0=y_\ell$ has a neighbor $z\in X-\{x_\ell,x_1\}$.
By the case, $z\neq x$, so suppose $z=x_j$ for some $2\leq j\leq \ell-1$. Then $G$ has cycle $y_\ell C[y_\ell,y_{j-1}]y_{j-1}xyx_\ell C^-[x_\ell,x_j]x_jy_\ell$  based on $X$,
a contradiction.
\end{proof}

Given a cycle $C$ and distinct $x_1,x_2,x_3\in X\cap V(C)$, we say that {\em $x_1$ and $x_2$ {\bf cross} at $x_3$} if the cyclic order is $x_1,x_3,x_2$ and 
$x_1y^+(x_3),x_2y^-(x_3)\in E(G)$ or if the cyclic order is $x_1,x_2,x_3$ and 
$x_1y^-(x_3),x_2y^+(x_3)\in E(G)$. In this case, we also say that {\em $x_3$ is {\bf crossed} by $x_1$ and $x_2$}.

The following is  Lemma 2.8 in~\cite{KLMZ}. It holds for each bipartite graph $G$ (no restrictions).

\begin{lem}[\cite{KLMZ}]\label{cros12}
Let $C$ be a cycle of an $(X,Y)$-bigraph $G$, and let $u, v\in V(C) \cap X$. If $u$ and $v$ have at most $a$ crossings, then $d_C(u) + d_C(v) \leq |V(C)|/2 + 2 + a$.\end{lem}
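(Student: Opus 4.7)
The plan is to split $C$ at $u$ and $v$ into two arcs, bound each arc's contribution to $d_C(u)+d_C(v)$ separately, and then sum.

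After cyclically relabeling, write $C=x_1y_1x_2y_2\cdots x_\ell y_\ell x_1$ with $u=x_1$ and $v=x_j$ for some $2\le j\le\ell$, and for each $i\in\{1,\dots,\ell\}$ set $a_i=\mathbf{1}[uy_i\in E(G)]$ and $b_i=\mathbf{1}[vy_i\in E(G)]$, so that $d_C(u)+d_C(v)=\sum_{i=1}^\ell(a_i+b_i)$. The cycle edges of $C$ force the boundary values $a_1=a_\ell=1$ and $b_{j-1}=b_j=1$. Unpacking the definition of ``crossing at $x_m$'' for the two possible cyclic orders yields
\[
\#\text{crossings}\;=\;\sum_{m=2}^{j-1}a_m b_{m-1}\;+\;\sum_{m=j+1}^{\ell}a_{m-1}b_m,
\]
where the first sum counts crossings in the first arc $x_1y_1\cdots x_{j-1}y_{j-1}x_j$ and the second counts crossings in the second arc $x_jy_j\cdots x_\ell y_\ell x_1$.

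The core step will be the inequality on the first arc:
\[
\sum_{i=1}^{j-1}(a_i+b_i)\;\le\;j+\sum_{m=2}^{j-1}a_m b_{m-1}.
\]
Rewriting the left side as $a_1+b_{j-1}+\sum_{m=2}^{j-1}(a_m+b_{m-1})$ and invoking the elementary pointwise identity $a+b\le 1+ab$ for $a,b\in\{0,1\}$ (equivalently $(1-a)(1-b)\ge0$) on each pair $(a_m,b_{m-1})$, the bound becomes a one-line computation once the boundary data $a_1=b_{j-1}=1$ is substituted. The analogous bound
\[
\sum_{i=j}^{\ell}(a_i+b_i)\;\le\;(\ell-j+2)+\sum_{m=j+1}^{\ell}a_{m-1}b_m
\]
for the second arc follows by the mirror-image calculation, using the boundary data $b_j=a_\ell=1$ and the pairs $(a_{m-1},b_m)$.

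Summing the two arc bounds gives $d_C(u)+d_C(v)\le j+(\ell-j+2)+a=\ell+2+a=|V(C)|/2+2+a$, as required. The only thing needing care is index bookkeeping — verifying that the four cycle-forced ones $a_1,a_\ell,b_{j-1},b_j$ land at the correct endpoints of the respective arcs, and correctly identifying which of the two cyclic orders in the definition of crossing applies in each arc — but there is no real obstacle, and the argument uses no hypothesis on $G$ beyond bipartiteness, matching the claim that the lemma holds for every bipartite graph.
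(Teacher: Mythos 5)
Your proof is correct. The paper states this lemma as a citation (Lemma~2.8 of~\cite{KLMZ}) and does not reproduce the proof, so there is no in-text argument to compare against; your argument is a clean, self-contained one. I checked the crucial identity
\[
  \#\{\text{crossings of } u \text{ and } v\} \;=\; \sum_{m=2}^{j-1} a_m b_{m-1} \;+\; \sum_{m=j+1}^{\ell} a_{m-1} b_m
\]
against the paper's two-clause definition of a crossing: on the arc $2\le m\le j-1$ the cyclic order of $(u,x_m,v)$ is $u,x_m,v$, and running through both assignments of the roles $(x_1,x_2)$ to $(u,v)$ and to $(v,u)$ shows that exactly one clause applies and in each case reduces to the single condition $a_m b_{m-1}=1$; the arc $j+1\le m\le\ell$ symmetrically yields $a_{m-1}b_m=1$, with no double-counting at any $x_m$. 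The four forced boundary values $a_1=a_\ell=b_{j-1}=b_j=1$ coming from the cycle edges supply the additive $2$ on each arc, the pointwise inequality $a+b\le 1+ab$ on $\{0,1\}$ handles the $j-2$ (resp.\ $\ell-j$) interior pairs, and $j+(\ell-j+2)=\ell+2=|V(C)|/2+2$ closes the bound. You also correctly observe that nothing beyond bipartiteness of $G$ is used, matching the paper's remark that the lemma holds for every bigraph.
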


\begin{lem}\label{T+} If $u_i\in X\cap T$, then
 $y^+(u_i)$ has no neighbors in $(F-V(C)) \cup X^+(T)$.
\end{lem}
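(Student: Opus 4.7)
The plan is to argue by contradiction: assume $y^+(u_i)$ has a neighbor $w$ in $(F-V(C)) \cup X^+(T)$, and derive one of three contradictions---either (i) a cycle of $G$ based on all of $X$, contradicting the fact that the critical graph $G$ admits no such cycle; (ii) an $x$-to-$C$ fan with $t+1$ feet, contradicting the maximality of $t$; or (iii) an $x$-to-$C$ fan with $t$ feet and strictly fewer vertices than $F$, contradicting the minimality of $|V(F)|$. First I will record the preliminary observation that $y^+(u_i) \notin T$: otherwise the two legs $F[x,u_i]$ and $F[x,y^+(u_i)]$ can be joined through the counterclockwise arc $C^-[u_i,y^+(u_i)]$, which passes through every $X$-vertex of $C$, to produce a cycle based on $X$.

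In Case 1 ($w \in V(F)\setminus V(C)$), bipartiteness forces $w\in X$. If $w=x$, I append the edge $xy^+(u_i)$ to $F$, obtaining a fan of size $t+1$. Otherwise $w$ is an internal $X$-vertex of some leg $F[x,u_j]$. If $u_j\neq u_i$, I construct the cycle
\[
 x\,F[x,u_i]\,u_i\,C^-[u_i,y^+(u_i)]\,y^+(u_i)\,w\,F[w,x]\,x,
\]
which is simple (the two fan paths lie on distinct legs of $F$, and all leg interiors are disjoint from $V(C)$) and covers every $X$-vertex of $C$ together with $x$. If $u_j=u_i$, I replace the leg $F[x,u_i]$ by the shortcut $F[x,w]\cdot wy^+(u_i)$; because $w,u_i\in X$ the removed sub-leg $F[w,u_i]$ has even length at least $2$, so the swap deletes at least two vertices while adding only $y^+(u_i)$, producing a fan of size $t$ with strictly fewer vertices.

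In Case 2 ($w = x^+(u_k)\in X^+(T)$), I use the analysis in the proof of Lemma~\ref{tX'} (giving $x^\pm(u_i)\notin T$) together with $y^+(u_i)\notin T$ to restrict to the non-trivial situation $u_k\neq u_i$; the case $u_k=u_i$ merely records the cycle edge $y^+(u_i)x^+(u_i)$. Then I construct the cycle
\[
 x\,F[x,u_k]\,u_k\,C^-[u_k,y^+(u_i)]\,y^+(u_i)\,w\,C[w,u_i]\,u_i\,F[u_i,x]\,x,
\]
where the counterclockwise arc $C^-[u_k,y^+(u_i)]$ and the clockwise arc $C[w,u_i]$ are the two pieces of $C$ on either side of the chord $y^+(u_i)w$; together they cover every vertex of $C$ except $y^+(u_k)$, and in particular every $X$-vertex of $C$. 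The two fan legs $F[x,u_k]$ and $F[u_i,x]$, internally disjoint by the fan axioms, contribute $x$, yielding a cycle based on $X$.

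The main obstacle will be the careful verification that each constructed cycle is simple and visits every $X$-vertex exactly once (and that the modified fan in the $u_j=u_i$ subcase really has strictly fewer vertices). The essential combinatorial inputs are the internal-disjointness of distinct legs of $F$, the fact that leg interiors lie in $V(F)\setminus V(C)$ and hence are off $C$, bipartiteness (used both to conclude $w\in X$ in Case 1 and to force the length-$\geq 2$ lower bound on $F[w,u_i]$ in Case 1b), and the choice of traversal directions on $C$ in Case 2 so that the two $C$-arcs share only their endpoints $y^+(u_i)$ and $w$.
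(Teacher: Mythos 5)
Your proposal is correct, and your Case 2 constructs exactly the same cycle as the paper (it is the paper's cycle $x_1 y^+(u_i) C[y^+(u_i),u]\,u\,F[u,u_i]\,u_i\,C^-[u_i,x_1]\,x_1$ traversed from a different starting vertex). The preliminary observation $y^+(u_i)\notin T$ matches the analysis inside the proof of Lemma~\ref{tX'} and is legitimately needed for your Case~1a. Where you diverge is in Case~1: the paper simply exhibits the cycle $u_i\,F[u_i,z]\,z\,y^+(u_i)\,C[y^+(u_i),u_i]\,u_i$ based on $X$, whereas you argue from extremality of the triple $(C,x,F)$ by enlarging the fan with the new leg $xy^+(u_i)$; both arguments work. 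Note, however, that your subcases 1b and 1c (where $w$ is an interior vertex of some leg of $F$) are vacuous: since $V(C)\cap X = X-\{x\}$, we have $X\cap(V(F)\setminus V(C))=\{x\}$, and bipartiteness already forced $w\in X$, so $w=x$ is the only possibility in Case~1. Recognizing this would collapse your Case~1 to a one-liner, as in the paper, and also shows that in Case~1b the path $F[u_i,z]$ must pass through $x$ (which is what makes the paper's cycle actually contain $x$).
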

\begin{proof} Suppose $y^+(u_i)$ has a neighbor $z$ in $F-V(C)$. Then the cycle $u_iF[u_i,z]zy^+(u_i)C[y^+(u_i),u_i]u_i$ is based on  $X$, a contradiction.

Suppose now that $y^+(u_i)$ has a neighbor $x_1$ in $X^+(T)$, where $u \in T$ satisfies $x^+(u) = x_1$. Then the cycle $x_1 y^+(u_i) C[y^+(u_i), u] u F[u, u_i] u_i C^-[u_i, x_1] x_1$ is based on  $X$, a contradiction.
\end{proof}

\begin{lem}\label{neighborF0}
If $x_1\in X^+(T)$, then $x_1$ cannot have a neighbor in $F-V(C)$.
\end{lem}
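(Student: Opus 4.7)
The plan is to assume for contradiction that some $z\in V(F)\setminus V(C)$ is a neighbor of $x_1=x^+(u_i)$, and to derive a contradiction by exhibiting either a cycle based on $X$ or an alternative triple that beats $(C,x,F)$ under our lexicographic choice (maximize $t$, then minimize $|V(F)|$). Because $x$ is the only $X$-vertex outside $V(C)$, all internal vertices of the fan paths lie in $Y\setminus V(C)$; in particular $z\in Y\setminus V(C)$, and $z$ belongs to a unique branch $P_j=F[x,u_j]$. Before splitting into cases I would establish the auxiliary fact $x_1\notin T$: if $x_1=u_k\in T$, then $u_i$ and $u_k$ are endpoints on distinct branches, so $F[u_i,u_k]$ traverses $x$, and
\[
    u_i\,F[u_i,u_k]\,u_k\,C^-[u_k,u_i]\,u_i
\]
is a cycle based on $X$ (the only vertex of $C$ skipped by the arc is a single $Y$-vertex).

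\textbf{Case 1: $j\neq i$.} Then $F[z,u_i]$ necessarily passes through $x$, and I would use the cycle
\[
    x_1\,z\,F[z,u_i]\,u_i\,C^-[u_i,x_1]\,x_1.
\]
Its $X$-vertices are $x_1$, $x$ (contributed by the fan portion), and all of $X\cap V(C)\setminus\{x_1\}$ (contributed by the counterclockwise arc, which skips only $y^+(u_i)\in Y$ when $u_i\in X$, and nothing when $u_i\in Y$). So this cycle is based on $X$, contradicting the hypothesis.

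\textbf{Case 2: $j=i$.} The analogous cycle misses $x$, so I would modify the fan. Replacing $P_i$ by $P_i':=F[x,z]\cdot zx_1$ gives an $x,C$-fan $F'$ of size $t$ (valid because $x_1\notin T$), with $|V(F')|=|V(F)|-|F(z,u_i]|+1$. If $|F(z,u_i]|\ge 2$, then $|V(F')|<|V(F)|$, contradicting minimality. Otherwise $zu_i\in E(G)$; bipartiteness forces $u_i\in X$, and by the proof of Lemma~\ref{tX'} we have $y^+(u_i)\notin T$. Now I would modify $C$ as well: replacing its length-two subpath $u_i\,y^+(u_i)\,x_1$ by $u_i\,z\,x_1$ produces a cycle $C^*$ of length $2\ell$ that is still based on $X-\{x\}$. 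Truncating $P_i$ at $z$ yields a fan $F^*$ from $x$ to $V(C^*)$ of size $t$ with $|V(F^*)|=|V(F)|-1$; the truncation is legitimate because every surviving internal fan vertex lies in $Y\setminus V(C)\setminus\{z\}$ and hence outside $V(C^*)=(V(C)\setminus\{y^+(u_i)\})\cup\{z\}$. The triple $(C^*,x,F^*)$ preserves $t$ but strictly decreases $|V(F)|$, contradicting our choice.

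The main obstacle is the last subcase ($zu_i\in E(G)$): simply rerouting the fan does not reduce $|V(F)|$, so one is forced to reroute $C$ simultaneously. The argument hinges on $y^+(u_i)\notin T$, which ensures no fan endpoint is lost when $y^+(u_i)$ is evicted from the cycle in favor of $z$.
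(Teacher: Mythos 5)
Your proof is correct and takes essentially the same route as the paper: exhibit a cycle based on $X$ when the new edge connects across branches of the fan, and otherwise produce a triple that beats $(C,x,F)$ in the chosen ordering. A few remarks are worth making. First, a notational slip: in the auxiliary fact, you want the \emph{clockwise} arc $C[u_k,u_i]$ (the long one, skipping only $y^+(u_i)$), not $C^-[u_k,u_i]$; your parenthetical makes clear this is what you mean. Second, the observation you make at the very start — that all internal fan vertices lie in $Y\setminus V(C)$ — actually does more work than you give it credit for: combined with bipartiteness (the root $x$ is in $X$, so the vertex at distance two from $x$ on a branch would be in $X$ again), it forces every branch of $F$ to have length at most $2$. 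Consequently an internal vertex of $P_i$ is automatically adjacent to $u_i$, so your Case~2 subcase $|F(z,u_i]|\ge 2$ is vacuous, and the auxiliary fact $x_1\notin T$ (which you only invoke there) is not needed. The paper's proof tacitly relies on this same structure: its dichotomy ``$y'\ne z$'' vs.\ ``$y'=z$'' is exhaustive precisely because each branch has at most one internal vertex, so $y'$ on the branch containing $u_1$ forces $y'=z$. Your remaining subcase ($zu_i\in E(G)$) agrees with the paper's $y'=z$ argument, including the key use of $y^+(u_i)\notin T$ to justify evicting $y^+(u_i)$ from the cycle in favor of $z$.
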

\begin{proof}
Suppose $x_1$ has a neighbor $y'$ in $F-V(C)$. Let $u_1\in T$ be such that $x_1=x^+(u_1)$ and $z$ be a neighbor of $u_1$ in $F$. Let $P$ be
a $z,y'$-path in $F$ and the cycle $C'$ be defined by  $C'=x_1C[x_1,u_1]u_1zPy'x_1$. If $y'\neq z$, then $C'$
 is  based on $X$ and we are done. Thus $z=y'$ and hence $u_1\in X$. If $y'=z$, then let $F'=F-u_1$. Note $F'$ is a fan of $C'$ such that $|V(F\cap C)|=|V(F'\cap C')|$, but $|V(F')|<|V(F)|$, contradicting the choice of $C$ and $F$.
\end{proof}

%

\begin{lem}\label{cross0} Suppose that $x_1,x_2\in X^+( T)$. Then
\begin{enumerate}
\item[(i)] $x_1$ and $x_2$ share no neighbors in $Y - V(C)$;
\item[(ii)] neither $x_1$ nor $x_2$ share a neighbor in $Y - V(C)$ with $x$.
\end{enumerate}
\end{lem}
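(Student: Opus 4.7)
\textbf{Proof plan for Lemma~\ref{cross0}.} The unifying idea is that any such $y \in Y - V(C)$ must also lie outside $V(F)$: since $x_1,x_2 \in X^+(T)$, Lemma~\ref{neighborF0} forbids them from having any neighbor in $F - V(C)$, so a neighbor $y \in Y - V(C)$ of $x_1$ (or $x_2$) cannot sit on $F$ either. Once $y$ is a genuinely ``new'' vertex, I would use it in part (ii) to extend the fan $F$ by one path (contradicting the maximality of $t$), and in part (i) to splice together an $X$-cycle out of $F$ and two disjoint arcs of $C$ (contradicting that $G$ is a counterexample).

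For part (ii), suppose $y \in Y - V(C)$ is a common neighbor of $x$ and $x_1 = x^+(u_i)$. The plan is to verify (a) $y \notin V(F)$, which is immediate from Lemma~\ref{neighborF0}, and (b) $x_1 \notin T$. For (b), if instead $x_1 = u_k \in T$ for some $k$, then the unique $u_i, x_1$-path in the spider $F$ runs through the root $x$, and concatenating it with the clockwise arc $C[x_1,u_i]$ gives a cycle; because $x_1 = x^+(u_i)$, this arc traverses nearly all of $C$ and captures every $X$-vertex of $C$, yielding a cycle based on $X$---contradiction. With (a) and (b) in hand, adjoining the path $xyx_1$ to $F$ produces an $x,C$-fan with $|V(C)\cap V(F)| = t+1$, contradicting the maximality of $t$.

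For part (i), suppose $y \in Y - V(C)$ is a common neighbor of $x_1 = x^+(u_i)$ and $x_2 = x^+(u_j)$ with $i \neq j$. Let $P_i, P_j$ be the paths of $F$ from $x$ to $u_i,u_j$, and consider the closed walk
\[
    C^* \;:=\; x\, P_i\, u_i\, C^-[u_i,x_2]\, x_2\, y\, x_1\, C[x_1,u_j]\, u_j\, P_j\, x.
\]
Orient $C$ so that its clockwise cyclic order reads $u_i,x_1,\ldots,u_j,x_2,\ldots,u_i$; then the arcs $C[x_1,u_j]$ and $C^-[u_i,x_2] = C[x_2,u_i]$ are disjoint subpaths of $C$. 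The paths $P_i,P_j$ are internally disjoint from each other and from $V(C)\setminus\{u_i,u_j\}$, and $y \notin V(C)\cup V(F)$, so $C^*$ is a simple cycle in $G$. The two arcs together cover every $X$-vertex of $C$, and adding $x$ accounts for the one remaining element of $X$, so $C^*$ is based on $X$---a contradiction.

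The main step needing care is the bookkeeping in part (i): verifying that, regardless of whether each of $u_i,u_j$ lies in $X$ or $Y$, the two arcs truly partition the $X$-vertices of $C$ between them, and that the internal vertices of $P_i,P_j$ (which must all lie in $Y$, since $|X\cap V(C)| = \ell = |X|-1$ leaves no room for an $X$-vertex off $C$) do not collide with anything else on $C^*$. In part (ii) the only subtle point is ruling out $x_1 \in T$, and this uses essentially the same ``long arc'' trick that drives the proof of Lemma~\ref{tX'}.
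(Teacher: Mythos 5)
Your part (i) is essentially the paper's proof: the cycle
\[
x\,P_i\,u_i\,C^-[u_i,x_2]\,x_2\,y\,x_1\,C[x_1,u_j]\,u_j\,P_j\,x
\]
is exactly the paper's cycle $x_1C[x_1,u_2]u_2F[u_2,u_1]u_1C^-[u_1,x_2]x_2yx_1$ written from a different starting vertex (with $(u_i,u_j)\leftrightarrow(u_1,u_2)$), and you correctly invoke Lemma~\ref{neighborF0} to place $y$ outside $F$.

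Your part (ii) is correct but takes a different route. The paper directly builds a cycle based on $X$, namely $x_1C[x_1,u_1]u_1F[u_1,x]xyx_1$, using the same ``long arc plus fan-branch plus the new vertex $y$'' splice that drives part (i). You instead prove the auxiliary fact $x_1\notin T$ and then extend the fan by the extra branch $xyx_1$, contradicting the maximality of $t$. Both arguments close the gap. The paper's route is slightly more economical because one cycle-splicing template handles (i) and (ii) uniformly and there is no need to establish $x_1\notin T$ at all (the direct cycle is simple regardless, since the interior of $F[u_1,x]$ lies in $Y$). Your route buys a modest structural observation---$X^+(T)\cap T=\emptyset$ for all of $T$, not just $T\cap X$---which the paper only proves for $T\cap X$ inside Lemma~\ref{tX'}; this is not needed elsewhere, but it is a valid and correctly justified step, and your ``long arc'' argument for it handles both $u_i\in X$ and $u_i\in Y$.
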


\begin{proof} From Lemma~\ref{neighborF0}, we know that if $x_1$ and $x$ have a common neighbor outside of $C$, it is not in $F$. Suppose they share some neighbor $y \in Y - V(C)$. Let $x_1=x^+(u_1)$. Then we have a longer cycle $x_1C[x_1,u_1]u_1F[u_1,x]xyx_1$. This proves (ii).

Suppose $x_1$ and $x_2$ share a neighbor $y \in Y - V(C)$, and
 $u_1,u_2\in  T$ are such that $x_1=x^+(u_1)$ and $x_2=x^+(u_2)$.
 By Lemma~\ref{neighborF0}, $y\notin F$. The cycle 
$C' := x_1C[x_1,u_2]u_2F[u_2,u_1]u_1C^-[u_1,x_2]x_2yx_1$ is  based on $X$, a contradiction.
\end{proof}


\begin{lem}\label{cross1} Suppose $u_1,u_2\in {T}$.
If $x_1=x^+(u_1)$ and $x_2=x^+(u_2)$ cross at $x_3\in X\cap V(C)$, then
\begin{enumerate}
\item[(i)] $x_3\notin {T}$;
\item[(ii)] $G$ has a cycle $C'$ containing $(X\cap V(C)-\{x_3\})\cup \{x\}$ such that  $|C'|\geq |C|$;
\item[(iii)] $x_3$ shares no neighbors in $Y-V(C)$ with any vertex in the set $\{x\}\cup X^+(T)$;
\item[(iv)] $x_3$ has at most $t$ neighbors on $C$.
\end{enumerate}
\end{lem}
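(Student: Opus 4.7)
The plan is to prove (i)--(iv) in order, with (i) and (iii) arguing by contradiction against the criticality of $G'$ or the maximality of $t$, (ii) being a direct construction, and (iv) invoking the maximality of $t$ again. Let $A_1$ denote the arc of $C$ clockwise from $x_1$ through $x_3$ to $x_2$ and $A_2$ the complementary arc. For (i), I would suppose $x_3\in T$. Since internal vertices of fan paths lie in $V(G')\setminus V(C)$ and $x_3\in X\cap V(C)$, the path $P_3=F[x,x_3]$ must have length exactly $2$, say $P_3=xyx_3$ for some $y\in Y\setminus V(C)$. Then the cycle
\[
x\to y\to x_3\to y^-(x_3)\to C^-[y^-(x_3),x_1]\to x_1\to y^+(x_3)\to C[y^+(x_3),x_2]\to x_2\to C[x_2,u_1]\to u_1\to F[u_1,x]\to x
\]
uses the crossing edge $x_1y^+(x_3)$ and visits every vertex of $X$ (namely $x_1$ together with the $X$-interior of $A_1$ on the $x_1$-side of $x_3$ via the counterclockwise arc, $x_2$ together with the $X$-interior on the $x_2$-side via the clockwise arc, the $X$-interior of $A_2$ from $x_2$ up to but not including $x_1$, and finally $x$ and $x_3$). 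This contradicts the criticality of $G'$, so $x_3\notin T$.

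For (ii), I would construct $C'$ using both crossings and both fan-paths to $u_1$ and $u_2$:
\[
C'=x\to F[x,u_2]\to u_2\to C^-[u_2,y^+(x_3)]\to y^+(x_3)\to x_1\to C[x_1,y^-(x_3)]\to y^-(x_3)\to x_2\to C[x_2,u_1]\to u_1\to F[u_1,x]\to x.
\]
An analogous inventory along $C'$ shows that $V(C')\cap X=(X\cap V(C))\setminus\{x_3\}\cup\{x\}$, which has $\ell$ elements; by bipartite alternation $|C'|=2\ell=|C|$, as required.

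For (iii), I would suppose $y^*\in Y\setminus V(C)$ is a common neighbor of $x_3$ and some $z\in\{x\}\cup X^+(T)$, and split on whether $y^*\in V(F)$. When $y^*\notin V(F)$, the fan $F$ can be augmented: for $z=x$, by the new path $x\to y^*\to x_3$, producing a fan of size $t+1$; for $z\in X^+(T)$, analogously through the existing fan-path to $u_z$ together with the new detour $u_z\to z\to y^*\to x_3$ after suitable rerouting. In either case this contradicts the maximality of $t$. When $y^*\in V(F)$, the vertex $y^*$ must be the internal $Y$-vertex of some $F[x,u_i]$ with $u_i\in X$; replacing the edge $y^*u_i$ by $y^*x_3$ produces a new fan $F''$ with $|T''|=t$ and $|V(F'')|=|V(F)|$ but with $x_3\in T''$, and (i) applied to $(C,x,F'')$ yields the contradiction.

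For (iv), I would consider the triple $(C',x_3,F^*)$ where $F^*=\{x_3 y:y\in N(x_3)\cap V(C')\}$ is the star-fan of length-$1$ paths from $x_3$ to its $C'$-neighbors. Maximality of $t$ forces $|N(x_3)\cap V(C')|=|V(C')\cap V(F^*)|\leq t$. By (iii), every fan-path internal $Y$-vertex appearing in $V(C')\setminus V(C)$ is adjacent to $x$ and so not to $x_3$; the only $Y$-vertices potentially in $V(C)\setminus V(C')$ are $y^-(x_1)$ (when $u_1\in X$) and $y^-(x_2)$ (when $u_2\in X$). When $u_i\in X\cap T$, Lemma~\ref{T+} says $y^+(u_i)=y^-(x_i)$ has no neighbor in $X^+(T)$; combined with (iii) and an additional case analysis considering whether $x_3\in X^+(T)$ (and if not, exploiting the resulting structural constraints on $T$ near $x_3$), one rules out $y^-(x_i)\in N(x_3)$ in every case, yielding $|N(x_3)\cap V(C)|\leq t$. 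The main obstacle is precisely this last bookkeeping step: obtaining the sharp bound $t$ rather than $t+2$ requires careful joint use of (iii), Lemma~\ref{T+}, and the position of $x_3$ relative to $X^+(T)$.
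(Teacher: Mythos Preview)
Your constructions for (i) and (ii) are correct and, once unpacked, coincide with the paper's cycles (the observation that every fan path has length at most $2$ is valid, since $x$ is the only $X$-vertex off $C$).

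The genuine problems are in (iii) and (iv).

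\textbf{Part (iii), case $z\in X^+(T)$.} Your proposed augmentation ``through the existing fan-path to $u_z$ together with the new detour $u_z\to z\to y^*\to x_3$'' does not produce an $x,C$-fan: both $u_z$ and $z=x^+(u_z)$ lie on $C$, so the path $x\,F[x,u_z]\,u_z\,z\,y^*\,x_3$ meets $C$ internally and cannot serve as a new leg. No rerouting fixes this, because $y^*$ is joined to $C$ only at $x_j$ and $x_3$, neither of which helps reach $x$ off~$C$. The paper does not argue via fan size here; it builds an explicit cycle based on $X$, splitting into two sub-cases according to whether $x_j$ lies on $C[y^+(x_3),u_1]$ or on $C[u_1,y^-(x_3)]$, and in each sub-case writes down a cycle that uses the edge $x_jy^*x_3$, the crossing edge $x_1y^+(x_3)$ (respectively $x_2y^-(x_3)$), and the fan path $F[u_1,u_j]$ (respectively $F[u_2,u_j]$). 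Your fan-maximality idea simply does not reach this case.

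\textbf{Part (iv).} Your reduction to $|N(x_3)\cap V(C')|\le t$ is sound, and you correctly identify that the only possible losses in passing from $C'$ back to $C$ are $y^-(x_1)$ and $y^-(x_2)$, occurring only when $u_1,u_2\in X$. However, Lemma~\ref{T+} only rules out $x_3y^-(x_i)$ when $x_3\in X^+(T)$; when $x_3\notin X^+(T)$ it says nothing. You flag this as ``the main obstacle'' and promise ``an additional case analysis,'' but there is no structural shortcut here: the paper disposes of each of the two potential edges $x_3y^-(x_1)$ and $x_3y^-(x_2)$ by writing down an explicit cycle based on $X$ (using the crossing edges, the fan $F[u_1,u_2]$, and the hypothetical edge), obtaining a contradiction. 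Without those two cycle constructions, your bound is only $t+2$, not $t$.

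In short, parts (i)--(ii) match the paper, the $z=x$ half of (iii) is a legitimate alternative, but the $z\in X^+(T)$ half of (iii) and the sharpening in (iv) both require the explicit Hamiltonian-type cycle constructions that the paper gives and that your fan-maximality framework cannot replace.
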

\begin{proof}
For part (i), suppose that the cyclic order is $x_1,x_3,x_2$ and  $x_1y^+(x_3),x_2y^-(x_3)\in E(G)$ (the other case is symmetric). Let $y$ be a neighbor of $x_3$ in $F$. 
Let $z$ be a neighbor of $u_1$ in $F$. Let $P$ be
a $z,y$-path in $F$ and the cycle $C'$ be defined by 
\[
	C' := x_1y^+(x_3)C[y^+(x_3),u_1]u_1zPyx_3C^-[x_3,x_1]x_1.
\]
Note $C'$
 is  based on $X$. 

The cycle 
\[
	C_1 := x_1y^+(x_3)C[y^+(x_3),u_2]u_2F[u_2,u_1]u_1C^-[u_1,x_2]x_2y^-(x_3)C^-[y^-(x_3),x_1]x_1
\]
proves (ii).

To prove (iii), assume that $y \in Y -V(C)$ is a common neighbor of $x_3$ and a vertex in $\{x\}\cup X^+(T)$, and
consider all cases. If $yx \in E(G)$, let $$ C'=x_1y^+(x_3)C[y^+(x_3),u_1]u_1F[u_1,x]xyx_3C^-[x_3,x_1]x_1.$$ If $y$ is not in $F[x,u_1]$, then $C'$ is a cycle  based on $X$, a contradiction. Otherwise, let $F''$ be $F-F[u_1,y]$. Note $F''$ is a fan of $C''=x_1y^+(x_3)C[y^+(x_3),u_1]u_1F[u_1,y]yx_3C^-[x_3,x_1]x_1$ such that $|V(F\cap C)|=|V(F''\cap C'')|$, but $|V(F'')|<|V(F)|$, contradicting the choice of $C$ and $F$.

If $u_j\in T$, $x_j=x^+(u_j)$, $yx_j\in E(G)$, and   $x_j\in C[y^+(x_3),u_1]$, then the cycle 
\[
	C' := x_1C[x_1,x_3]x_3yx_jC[x_j,u_1]u_1F[u_1,u_j]u_jC^-[u_j,y^+(x_3)]y^+(x_3)x_1
\]
is  based on $X$. Similarly, if $x_j\in C[u_1,y^-(x_3)]$, then the cycle
\[
	C' := x_2C[x_2,u_j]u_jF[u_j,u_2]u_2C^-[u_2,x_3]x_3yx_jC[x_j,y^-(x_3)]y^-(x_3)x_2
\]
is  based on $X$,
 a contradiction. This proves (iii).

By the choice of $(C,x,F)$ and (ii), $x_3$ has at most $t$ neighbors on $C_1$. The only vertices in $Y\cap V(C)-V(C_1)$ are $y^-(x_1)$ and $y^-(x_2)$. If $x_3y^-(x_1)\in E(G)$, then the cycle 
\[
	y^-(x_1)C[y^-(x_1),y^-(x_3)]y^-(x_3)x_2C[x_2,u_1]u_1F[u_1,u_2]u_2C^-[u_2,x_3]x_3y^-(x_1)
\]
is  based on $X$. If $x_3y^-(x_2)\in E(G)$, then the cycle 
\[
	x_1C[x_1,x_3]x_3y^-(x_2)C[y^-(x_2),u_1]u_1F[u_1,u_2]u_2C^-[u_2,y^+(x_3)]y^+(x_3)x_1
\]
is  based on $X$. 
This proves (iv).
\end{proof}


\begin{lem}\label{cros11} Suppose $u_1,u_2\in {T}$,
 $x_1=x^+(u_1)$, and $x_2=x^+(u_2)$. Then no two vertices   $x_3,x_4\in V(C)$  crossed by $x_1$ and $x_2$ have a shared neighbor in $Y - V(C)$.
\end{lem}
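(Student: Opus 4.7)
The plan is to derive a contradiction by exhibiting a cycle $C^*$ in $G$ whose base vertex set is all of $X$, contradicting the assumed criticality of $G$.

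\emph{Setup.} By Lemma~\ref{cross1}(i), $x_3,x_4\notin T$, and applying Lemma~\ref{cross1}(iii) to each of $x_3,x_4$ gives $y\notin N(x)\cup N(X^+(T))$; in particular $yx,yx_1,yx_2\notin E(G)$. I will also need $y\notin V(F)$. By the minimality of $|V(F)|$, no $F$-path has an internal vertex on $C$, so every internal $F$-vertex lies in $V(G)-V(C)=\{x\}\cup(Y-V(C))$; combined with bipartite parity starting at $x\in X$, this forces each $F$-branch to have length at most $2$. Thus every length-$2$ branch has the form $x\,y_i\,u_i$ with $y_i\in Y-V(C)$ and $u_i\in X\cap T$. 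If $y$ were such an internal $y_i$, then $yx\in E(F)\subseteq E(G)$ would follow, contradicting the Setup. Hence $y\notin V(F)$.

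\emph{Cycle construction.} Split on the cyclic order of $\{x_1,x_2,x_3,x_4\}$ on $C$. In Case~A (cyclic order $x_1,x_3,x_2,x_4$), combine the crossings $x_1y^+(x_3)$ and $x_2y^+(x_4)$, the $y$-bridge $x_3\,y\,x_4$, and the fan detour through $x$ to define
\begin{align*}
C^* & = x\,F[x,u_2]\,u_2\,C^-[u_2,y^+(x_3)]\,y^+(x_3)\,x_1\,C[x_1,y^-(x_3)]\,y^-(x_3)\,x_3\,y\,x_4 \\
    & \quad C^-[x_4,y^+(x_2)]\,y^+(x_2)\,x_2\,y^+(x_4)\,C[y^+(x_4),u_1]\,u_1\,F[u_1,x]\,x.
\end{align*}
The four $C$-arcs $C^-[u_2,y^+(x_3)]$, $C[x_1,y^-(x_3)]$, $C^-[x_4,y^+(x_2)]$, $C[y^+(x_4),u_1]$ exactly cover the four $X$-arcs of $C$ between consecutive elements of $\{x_1,x_3,x_2,x_4\}$, so every $X$-vertex of $V(C)$ is visited once, and $x$ is included via $F$. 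In Case~B (cyclic order $x_1,x_3,x_4,x_2$; the mirror order is symmetric), I use the crossings $x_1y^+(x_3)$ and $x_2y^-(x_4)$ together with the $y$-bridge to build an analogous cycle covering the four arcs between consecutive elements of $\{x_1,x_3,x_4,x_2\}$.

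\emph{Verification and main obstacle.} In each case every edge of $C^*$ lies in $E(G)$ (the two crossings by definition, the shared edges $x_3y,yx_4$ by assumption, and the $C$- and $F$-edges), and the listed vertices are pairwise distinct: the four $C$-arcs are pairwise disjoint subpaths of $C$; $y\notin V(C)\cup V(F)$ by the Setup; and the internal $F$-vertices lie in $V(F)-V(C)$, hence outside the $C$-arcs. Since $u_1,u_2$ may each lie in $X$ or $Y$, the length-$1$/length-$2$ description of $F$-branches from the Setup ensures $C^*$ remains well-defined in either case. Thus $C^*$ is a cycle in $G$ based on $X$, contradicting criticality of $G$. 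The main technical obstacle is the explicit assembly of $C^*$ for each cyclic configuration and the selection of the correct pair of crossings; the bookkeeping that $y\notin V(F)$ is dispatched cleanly by the minimality-plus-parity argument in the Setup.
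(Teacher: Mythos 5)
Your proof is correct and follows essentially the same route as the paper's: you split on the cyclic position of $x_3,x_4$ relative to $x_1,x_2$, select the same crossing edges, and the cycle you assemble in Case~A is identical (after renaming the starting point) to the one in the paper's corresponding case, with Case~B matching the paper's other case. Your derivation of $y\notin V(F)$ is a more explicit unwinding of what the paper obtains directly from Lemma~\ref{cross1}(iii), but it amounts to the same argument.
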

\begin{proof}
Suppose vertices $x_3, x_4 \in V(C) \cap X$ are crossed by $x_1$ and $x_2$ and there is some $y \in (N(x_3) \cap N(x_4)) - V(C)$. By Lemma~\ref{cross1}, $y \notin V(F)$.

We consider two cases. If $x_3$ and $x_4$  both are on $C[x_1,x_2]$ or both are on $C[x_2, x_1]$, then we may assume that their  cyclic order is $x_1, x_3, x_4, x_2$. In this case, the cycle 
\[
	x_1 C[x_1, x_3] x_3 y x_4 C[x_4, u_2] u_2 F[u_2, u_1] u_1C^-[u_1, x_2] x_2y^-(x_4) C^-[y^-(x_4), y^+(x_3)] y^+(x_3)x_1
\] is  based on $X$.

%

If one of $x_3$ and $x_4$ is  on $C[x_1, x_2]$ and the other is on $C[x_2, x_1]$, then we may assume that their cyclic order is $x_1, x_3, x_2, x_4$. 
In this case, the cycle
\[
	x_1 C[x_1, x_3] x_3 y x_4 C^-[x_4, x_2] x_2 y^+(x_4) C[y^+(x_4), u_1] u_1 F[u_1, u_2] u_2C^-[u_2, y^+(x_3)] y^+(x_3) x_1
\] is  based on $X$.
This proves the lemma. 
\end{proof}


\begin{lem}\label{smallsum}Let $A \subseteq X^+(T)$. Then $\sum_{w \in A} d_C(w) \leq |A|(\ell - 2) + 2$. 
\end{lem}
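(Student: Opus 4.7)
My plan is to apply Lemma~\ref{cros12} pairwise to vertices of $A$ and exploit the structural restrictions on where crossings may occur. For any distinct $w, w' \in A \subseteq X^+(T)$, Lemma~\ref{cros12} gives $d_C(w) + d_C(w') \le \ell + 2 + a(w, w')$, and by Lemma~\ref{cross1}(i) every crossing occurs at some $x_3 \in (X \cap V(C)) \setminus T$; so controlling the sum $\sum_{w \in A} d_C(w)$ reduces to controlling the total number of crossings among pairs in $A$.

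I would then split the argument on $|X \cap T|$. If $|X \cap T| \ge 2$, Lemma~\ref{T+} supplies at least two $Y$-vertices of $V(C)$ — namely $y^+(u)$ for $u \in X \cap T$ — that are non-adjacent to every vertex of $X^+(T) \supseteq A$, so $d_C(w) \le \ell - 2$ for each $w \in A$ and $\sum_{w \in A} d_C(w) \le |A|(\ell - 2)$, which is stronger than needed. If $|X \cap T| = 1$, only one such $Y$-vertex is forbidden; here I would argue that at most two vertices of $A$ can attain $d_C(w) = \ell - 1$, since three such vertices $w_1, w_2, w_3$ would by Lemma~\ref{cros12} require at least $\ell - 4$ crossings per pair, while a count of crossings at the at most $\ell - 1$ admissible positions (with at most two per triple of $A$-vertices) forces a contradiction once $\ell$ is large enough, with residual small-$\ell$ instances handled via Lemmas~\ref{cros11} and~\ref{cross1}(iv). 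If $|X \cap T| = 0$, no $Y$-vertex is forbidden a priori, and I would invoke Lemma~\ref{cross1}(ii) — each crossing yields an alternative cycle $C'$ of length $|C|$ covering $(X \cap V(C) - \{x_3\}) \cup \{x\}$ — together with the choice of $(C, x, F)$ maximizing $t = |V(C) \cap V(F)|$, which forces the fan from $x_3$ to $C'$ to have size at most $t$. Combining this fan-maximality with Lemmas~\ref{cross0} and~\ref{cros11} (which together preclude the sharing of off-cycle neighbors among $X^+(T)$-vertices and among crossed vertices) should yield the required bound.

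The main obstacle is the case $|X \cap T| = 0$ and the residual small-$\ell$ cases of $|X \cap T| = 1$, where the direct pigeonhole bounds on crossings become too loose; handling them requires a delicate interplay between the alternative-cycle constructions of Lemma~\ref{cross1}(ii), the maximality of $(C, x, F)$, and the no-shared-external-neighbor restrictions of Lemmas~\ref{cross0} and~\ref{cros11} to rule out configurations with too many high-degree vertices in $A$.
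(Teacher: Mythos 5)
Your proposal and the paper's proof diverge significantly, and your approach has a genuine gap in the cases $|X \cap T| \le 1$.

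The case $|X \cap T| \ge 2$ does work (with a small correction: for $u_i \in X \cap T$, Lemma~\ref{T+} does not forbid the cycle-edge $y^+(u_i) x^+(u_i)$, so the two vertices $x^+(u_i), x^+(u_j) \in A$ could still have $d_C = \ell-1$; you get $\sum_{w\in A} d_C(w) \le |A|(\ell-2)+2$ rather than $|A|(\ell-2)$, which is still fine). The problem is the other cases. You propose to bound the number of $A$-vertices with $d_C(w) = \ell-1$ via crossing counts: three such vertices $w_1,w_2,w_3$ force $\ge \ell-4$ crossings per pair by Lemma~\ref{cros12}, and you then assert that each eligible position $x_3$ is crossed by at most two of the three pairs to get a contradiction for large $\ell$. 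That assertion is false in precisely the relevant situation. If $|X \cap T| = 1$ with $u_1 \in X \cap T$ and $w_1,w_2,w_3 \in A$ all have $N_C(w_i) = (Y \cap V(C)) - \{y^+(u_1)\}$, then at every $x_3 \in (X \cap V(C)) - \{u_1, x^+(u_1), w_1, w_2, w_3\}$ one checks directly from the crossing definition that \emph{all three} pairs cross at $x_3$ simultaneously. So the crossing census gives no contradiction whatsoever, and the pigeonhole does not close. The vaguer $|X\cap T| = 0$ plan (Lemma~\ref{cross1}(ii) plus fan-maximality plus Lemmas~\ref{cross0}, \ref{cros11}) likewise never produces the arithmetic you need, since Lemmas~\ref{cross0} and~\ref{cros11} are about shared off-cycle neighbours and say nothing about $d_C$.

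The ingredient you are missing is a \emph{rerouting} step, which is the heart of the paper's proof. The paper first shows: if $x_1 = x^+(u_1)$, $x_2 = x^+(u_2)$ with $u_2 \in Y$, and $y^+(x_2)x_1 \in E(G)$, then one can build the cycle $C' = x_1\, y^+(x_2)\, C[y^+(x_2),u_1]\, u_1\, F[u_1,u_2]\, u_2\, C^-[u_2,x_1]\, x_1$, which contains all of $C$ except $x_2$ (and possibly $y^+(u_1)$ if $u_1 \in X$). By Lemma~\ref{T+}, $N_C(x_2) = N_{C'}(x_2)$, and applying Lemma~\ref{tX'} to the pair $(C', x_2)$ gives $d_C(x_2) = d_{C'}(x_2) \le \ell - 2$. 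From this single claim the whole lemma falls out in two lines: if some $x_1 \in A$ has $d_C(x_1) = \ell$ then Lemma~\ref{T+} forces every other $u_2$ to lie in $Y$ and the claim gives $d_C \le \ell-2$ for all other members of $A$; otherwise all degrees are $\le \ell-1$, and if two vertices $x_1, x_2$ attain $\ell-1$, then for any third $x_3 \in A$ the claim (or Lemma~\ref{T+}) kills the edge $x_3 y^+(x_1)$ and likewise $x_3 y^+(x_2)$, forcing $d_C(x_3) \le \ell-2$. No crossing count and no split on $|X \cap T|$ is needed. You should replace the crossing-based pigeonhole with this rerouting-plus-Lemma-\ref{tX'} step; the crossing machinery, while useful elsewhere in the paper (e.g.\ Lemma~\ref{cros111}), is simply the wrong tool here.
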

\begin{proof}
Let $x_1, x_2 \in A$ such that $x_1= x^+(u_1)$ and $x_2 = x^+(u_2)$ for some $u_1, u_2 \in T$. We first prove that 
\begin{equation}\label{yneighbor2}\mbox{if $u_2 \in Y$ and $y^+(x_2)x_1 \in E(G)$, then $d_C(x_2) \leq \ell - 2$. }
\end{equation}
The cycle $C' = x_1 y^+(x_2) C[y^+(x_2), u_1]u_1 F[u_1, u_2]u_2 C^-[u_2, x_1] x_1$ contains all vertices in $C$ except $x_2$ and possibly $y^+(u_1)$ (in the case that $u_1 \in X$).  By Lemma~\ref{T+}, $N_C(x_2) = N_{C'}(x_2)$.  By Lemma~\ref{tX'} applied to $C'$ and $x_2$, $d_{C}(x_2) = d_{C'}(x_2) \leq \ell - 2$. 

In particular, if $d_C(x_1) = \ell$, i.e., $x_1$ is adjacent to every $y$ vertex in $C$, then by Lemma~\ref{T+}, each $x_2 \in X^+(T) - \{x_1\}$ satisfies $u_2 \in Y$. Therefore by~\eqref{yneighbor2}, $d_C(x_2) \leq \ell - 2$. It follows that $\sum_{w \in A} d_C(w) \leq |A|(\ell - 2) + 2$. 

So suppose every $w \in A$ has $d_C(w) \leq \ell - 1$, and there exist two vertices $x_1, x_2 \in A$ with equality. Define $u_1, u_2$ as before. Then for every $x_3 \in X^+(T) - \{x_1,x_2\}$, either $u_1 \in Y$ and $x_3y^+(x_1) \notin E(G)$ by~\eqref{yneighbor2}, or $u_1 \in X$ and $x_3y^+(x_1) \notin E(G)$ by Lemma~\ref{T+}. The same holds for $x_3$ and $x_2$. Therefore $d_C(x_3) \leq \ell-2$, and again $\sum_{w \in A} d_C(w) \leq |A|(\ell - 2) + 2$. 
\end{proof}

\begin{lem}\label{cros111} Suppose $t\geq 4$, $u_1,u_2\in {T}$,
 $x_1=x^+(u_1)$, and $x_2=x^+(u_2)$. 
 Then  at most one vertex in $C$ is crossed by $x_1$ and $x_2$.
\end{lem}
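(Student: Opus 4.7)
The plan is to argue by contradiction: suppose distinct $x_3,x_4\in V(C)\cap X$ are both crossed by $x_1=x^+(u_1)$ and $x_2=x^+(u_2)$.

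First I would collect the structural constraints implied by the earlier lemmas. By Lemma~\ref{cross1}(i), neither $x_3$ nor $x_4$ lies in $T$. By Lemma~\ref{cross1}(iv), $d_C(x_3),d_C(x_4)\leq t$, and Lemma~\ref{tX'} gives $t\leq \ell-2$; combined with $\delta\geq n=\ell+1$, this forces each of $x_3$ and $x_4$ to have at least three neighbors in $Y-V(C)$. Lemma~\ref{cros11} says the sets $N(x_3)\cap(Y-V(C))$ and $N(x_4)\cap(Y-V(C))$ are disjoint, and Lemma~\ref{cross1}(iii) says neither meets $N(\{x\}\cup X^+(T))\cap(Y-V(C))$. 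In particular, choosing any $y_3\in N(x_3)-V(C)$, its second neighbor $v_{i_3}$ (which exists because $y_3\in\widehat{N}(X)$) lies in $X-\{x,x_1,x_2,x_3,x_4\}$; since $t\geq 4$ and $t\leq \ell-2$ we have $|X|=\ell+1\geq 7$, so this set is nonempty.

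The main work is a case analysis on the cyclic arrangement of $x_1,x_3,x_2,x_4$ on $C$: Case A (same arc of $C-\{x_1,x_2\}$, cyclic order $x_1,x_3,x_4,x_2$) and Case B (opposite arcs, cyclic order $x_1,x_3,x_2,x_4$). In each case the four crossing-chords $x_1y^{\pm}(x_3),x_2y^{\mp}(x_3),x_1y^{\pm}(x_4),x_2y^{\mp}(x_4)$ together with the fan-segment $F[u_1,u_2]$ (which contains $x$) form a rich skeleton. Following the template of the construction in the proof of Lemma~\ref{cros11}, I would first invoke Lemma~\ref{cross1}(ii) at the crossing $x_3$ to obtain a cycle $C_1$ with $V(C_1)\cap X=X-\{x_3\}$ and $|C_1|\geq |C|$, and then splice $x_3$ back into $C_1$ through the external path $x_3y_3v_{i_3}$; the role played by the common outside neighbor $y$ in Lemma~\ref{cros11} is replaced here by this two-edge splice, using that $v_{i_3}\notin\{x,x_1,x_2,x_3,x_4\}$ puts the splicing point in an ``interior'' position of $C_1$ where its two cycle-edges can be rerouted through $x_3$.

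The main obstacle I expect is the bookkeeping of the splice in all sub-arrangements: the splicing point $v_{i_3}$ must be positioned on $C_1$ so that at least one of its $Y$-neighbors on $C_1$ is free to detach, and the corresponding crossing chord at $x_4$ must remain usable after the swap at $x_3$. The assumption $t\geq 4$ is essential at exactly this step: it guarantees two additional fan-anchors $u_3,u_4\in T$ beyond $u_1,u_2$, so when the first fan-segment $F[u_1,u_2]$ is consumed by the $x_3$-swap the remaining anchors still provide a way to reconnect the detoured arc to the rest of the cycle. When the direct splice at a given $v_{i_3}$ is blocked, I would use the fact that $x_3$ has at least three disjoint outside neighbors (so at least three candidate splicing points) together with Claim~\ref{x3} applied to $\{x_3,x_4,v_{i_3}\}$ to find a working configuration, thereby producing a cycle based on $X$ and contradicting the criticality of $G$.
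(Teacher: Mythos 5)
Your proposal takes a genuinely different route from the paper, and it does not go through as sketched.

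The paper proves Lemma~\ref{cros111} by a global counting argument, not by constructing a cycle. It sets $A = X^+(T)\cup\{x,x_3,x_4\}$ and observes that Claims~\ref{no-con-1}--\ref{no-con-3}, Lemma~\ref{cross1}(iii), and Lemma~\ref{cros11} together imply that no two vertices of $A$ share a neighbor in $Y-V(C)$. Combining $|N(x)-V(C)|,|N(x_3)-V(C)|,|N(x_4)-V(C)|\ge\delta-t$ (via Lemma~\ref{cross1}(iv)) with the bound of Lemma~\ref{smallsum} on $\sum_{w\in X^+(T)\setminus\{x_3,x_4\}} d_C(w)$, it derives $|Y|\ge 4\delta-9$, which contradicts $m\le 4\delta-10$. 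That is the entire proof; no cycle based on $X$ is ever built. Your approach instead tries to manufacture such a cycle directly by invoking Lemma~\ref{cross1}(ii) to remove $x_3$ and then splicing $x_3$ back in through an external path $x_3 y_3 v_{i_3}$.

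There are two concrete problems with the construction route. First, the step ``its second neighbor $v_{i_3}$ (which exists because $y_3\in\widehat{N}(X)$)'' is not justified: this lemma lives in $G$, the minimum counterexample to Theorem~\ref{mainpan3}, not in the critical subgraph $G'$, and $G$ may have many degree-$1$ vertices in $Y$. The $\ge 3$ neighbors of $x_3$ outside $V(C)$ that you extract from $d(x_3)\ge\delta>d_C(x_3)$ could all be degree-$1$, in which case there is no $v_{i_3}$ at all. (The paper's counting argument is indifferent to this: degree-$1$ neighbors still contribute to $|Y|$.) Second, you explicitly defer ``the bookkeeping of the splice in all sub-arrangements'' as an unresolved obstacle, and the suggestion of using Claim~\ref{x3} on $\{x_3,x_4,v_{i_3}\}$ to ``find a working configuration'' is a hope rather than an argument. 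It is also telling that your plan nowhere invokes the hypothesis $m\le 4\delta-10$, whereas the paper's proof is built entirely around it; the lemma is a property of smallest counterexamples under that bound, and a proof that dispenses with it would be proving a substantially stronger statement than the authors were able to.
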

\begin{proof}
Suppose vertices $x_3, x_4 \in V(C) \cap X$ are crossed by $x_1$ and $x_2$.

 Let $A=X^+(T)\cup\{x,x_3,x_4\}$
(possibly, $X^+(T)\cap\{x_3,x_4\}\neq \emptyset$), and $A'=A-\{x,x_3,x_4\}$. Note that $|A'|\geq t-2$, and by Lemma~\ref{smallsum} applied to $A'$, $\sum_{w \in A'}d_C(w) \leq |A'|(\ell-2)+2$.

Since $x$ can have at most $t$ neighbors on $C$, $|N(x) - V(C)| \geq \delta-t$. By Lemma~\ref{cross1}(iv),  $|N(x_3)-V(C)|\geq \delta-t$ and $|N(x_4)-V(C)|\geq \delta-t$. 
By Claims~\ref{no-con-1}--\ref{no-con-3} (applied to $G'$) and Lemmas~\ref{cross1}(iii) and~\ref{cros11}, no two distinct vertices in $A$ have a common neighbor in $Y - V(C)$.
Thus, remembering the $\ell$ vertices in $Y\cap V(C)$, we get \allowdisplaybreaks
\begin{align*}
|Y|	&\geq \ell+\sum_{u\in A}|N(u)-V(C)|\\
	&= \ell+|N(x)-V(C)|+|N(x_3)-V(C)|+|N(x_4)-V(C)|+\sum_{u\in A'}|N(u)-V(C)|\\
	&\geq \ell+3(\delta-t)+\delta |A'|- (\ell-2)|A'|-2
	\geq \ell+3\delta-3t+(\delta-\ell+2)|A'|-2\\
		&	\geq \ell+3\delta-3t+(\delta-\ell+2)(t-2)-2\
		\geq \ell+3\delta-3t+(\delta-\ell+2)+(\delta-\ell+2)(t-3)-2\\
	&\geq  \ell+3\delta-3t+(\delta-\ell+2)+3(t-3)-2
	= 4\delta -3t+2+3(t-3)-2
		=4\delta-9,
\end{align*}
as claimed.
\end{proof}

\begin{lem}\label{3con}
For any $x_1, x_2 \in X$, $x_1$ and $x_2$ cannot be separated by a set of two vertices.
\end{lem}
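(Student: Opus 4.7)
The plan is to argue by contradiction. Suppose $\{u, v\}$ is a $2$-vertex cut of $G$ separating $x_1$ from $x_2$; let $H_1, H_2$ be the resulting sides of $G - \{u, v\}$ (grouping components so that $x_i \in V(H_i)$), and set $X_i := X \cap V(H_i)$. Claims~\ref{x3}--\ref{x6} already give super-cyclicity whenever $|X| \le 6$, so any counterexample has $|X| \ge 7$ and in particular $|X - \{x_i\}| \ge 3$ for $i = 1, 2$. The minimality hypothesis stated at the start of this subsection therefore provides cycles $C_1, C_2 \subseteq G$ with $V(C_i) \cap X = X - \{x_{3-i}\}$, coming from the super-cyclic subgraphs $G[(X - \{x_{3-i}\}) \cup Y]$.

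In the main case $|X_1|, |X_2| \ge 2$, each cycle $C_i$ contains $X$-vertices on both sides of the cut and so must traverse both $u$ and $v$. Splitting each $C_i$ at $\{u, v\}$, we write $C_1 = Q_1 \cup Q_2$ and $C_2 = P_1 \cup P_2$, where each $Q_j, P_j$ is a $u, v$-path inside $V(H_j) \cup \{u, v\}$. Since $C_1$ is based on $X - \{x_2\}$, $Q_1$ contains $X_1$; since $C_2$ is based on $X - \{x_1\}$, $P_2$ contains $X_2$. As $V(H_1) \cap V(H_2) = \emptyset$, the paths $Q_1$ and $P_2$ share only the endpoints $u$ and $v$, so $Q_1 \cup P_2$ is a cycle in $G$ based on $X_1 \cup X_2 = X$, contradicting the choice of $G$.

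It remains to handle the boundary case $|X_1| = 1$ (the case $|X_2| = 1$ is symmetric). Then $X_1 = \{x_1\}$, and every $y \in Y \cap V(H_1)$ has $N(y) \subseteq \{x_1\}$, hence is a degree-$1$ vertex absent from $G' = G[X \cup \widehat{N}(X)]$. Consequently $N_{G'}(x_1) \subseteq \{u, v\}$; Claim~\ref{crit2} ($G'$ is $2$-connected) then forces $d_{G'}(x_1) = 2$ and $x_1 u, x_1 v \in E(G)$. Applying Lemma~\ref{l5}(a) to the saturated critical graph $G'$ with $x_0 = x_1$, we conclude $N_{G'}(u) = N_{G'}(v) = X$, so both $u$ and $v$ are adjacent to every vertex of $X$. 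A cycle in $G$ based on $X$ must therefore have the form $u - x_1 - v - \pi - u$, where $\pi$ is a $u, v$-path in $V(H_2) \cup \{u, v\}$ through all of $X_2$. Note $G - x_1 \in \cG(n-1, m, \delta)$ inherits~\eqref{lll} (since $\widehat{N}(A)$ is unaffected for $A \subseteq X - \{x_1\}$) and is strictly smaller than $G$, so by minimality $G - x_1$ is super-cyclic; combining this with the near-universal adjacency of $u$ and $v$ guaranteed by Lemma~\ref{l5}(a) yields such a path $\pi$, and then $\pi \cup \{ux_1, x_1v\}$ is a cycle in $G$ based on $X$, the contradiction. The cases in which $u$ or $v$ lies in $X$ follow by the same decomposition, with the $X$-separator vertex acting as a shared endpoint between the two sides' paths.

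The principal obstacle is the boundary case: when one side of the cut carries only a single $X$-vertex, the generic splitting of $C_1, C_2$ no longer directly yields the desired cycle, and one must instead exploit the strong structural consequences of Lemma~\ref{l5}(a) together with the super-cyclicity of $G - x_1$ to produce the required Hamilton-like $u, v$-path through $X_2$.
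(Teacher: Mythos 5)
Your main case ($|X_1|, |X_2| \ge 2$) is essentially correct and in fact slightly slicker than the paper's: you split the big cycles $C_i$ based on $X - \{x_{3-i}\}$, whereas the paper works with the smaller cycles based on $X_1 \cup \{x_2\}$ and $X_2 \cup \{x_1\}$ and argues $N_{C_1}(x_2) = N_{C_2}(x_1) = \{u_1, u_2\}$. Both routes work; yours should really first note (as the paper does via a 6-cycle obstruction) that only two components of $G - \{u,v\}$ can contain $X$-vertices, since otherwise the claim ``each $Q_j$ lies in $V(H_j) \cup \{u,v\}$'' requires the extra observation that a cycle through $u,v$ visits only two components — but this is a presentation issue, not a real gap.

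The boundary case $|X_1| = 1$ has a genuine gap. Your plan is to take a cycle $C'$ in $G - x_1$ based on $X - \{x_1\}$ and insert $x_1$ via the path $u\,x_1\,v$, using $N(u) = N(v) = X$ (which you correctly get from Lemma~\ref{l5}(a)). But this insertion only works if at least one of $u, v$ is \emph{not} on $C'$. If both $u$ and $v$ lie on $C'$, then $C' - \{u,v\}$ splits into two disjoint arcs $P_1, P_2$, each carrying part of $X - \{x_1\}$, and there is no way to rearrange them into a single $u,v$-path covering all of $X - \{x_1\}$: universality of $u$ and $v$ lets you connect $u$ or $v$ to any $X$-vertex, but you only get to use each of $u, v$ once, so you can include $P_1$ or $P_2$ but not both. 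Your sentence ``combining this with the near-universal adjacency of $u$ and $v$ \dots yields such a path $\pi$'' asserts precisely what needs proof and is false in general. The paper resolves this by constructing $G_1$ from $G$ by deleting $x_1$, \emph{all} of $x_1$'s degree-one neighbours, \emph{and} $u_2$, so that any cycle in $G_1$ automatically avoids $u_2$; the real work is then verifying that this pruned graph still satisfies $|\widehat{N}(A)| \geq |A|$, the $2$-connectivity clause of~\eqref{lll}, and the degree bound $\delta' \ge \max\{n', (m'+10)/4\}$ — none of which is automatic (e.g.\ $|\widehat{N}_{G_1}(A)| \geq |A|$ is deduced from $|\widehat{N}_G(A \cup \{x_1\})| \geq |A|+1$, not directly from $|\widehat{N}_G(A)| \geq |A|$). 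That verification is the content you are missing.

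Also note that simply passing to $G - x_1 - v$ instead does not rescue the argument: for $A \subseteq X - \{x_1\}$ one only gets $|\widehat{N}_{G - x_1 - v}(A)| \geq |A| - 1$, so~\eqref{lll} can fail and minimality cannot be invoked. You need the paper's more careful pruning and the accompanying checks.
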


\begin{proof}
Recall that $G$ is a vertex-minimum counterexample to Theorem~\ref{mainpan3}, and $G' = G[X \cup \widehat{N}(X)]$ is critical and saturated.

Suppose that for some $x_1, x_2 \in X, u_1, u_2 \in V(G)$, $x_1$ and $x_2$ are in different components of $G - \{u_1, u_2\}$. Note that $u_1, u_2 \in V(G')$, since $V(G) - V(G')$ contains only vertices of degree $1$ in $Y$.

If there also exists $x_3 \in X - \{x_1, x_2\}$ such that $x_3$ is in a different component of $G - \{u_1, u_2\}$ than both $x_1$ and $x_2$, then $G$ cannot contain a 6-cycle based on $\{x_1, x_2, x_3\}$, since these vertices are separated by a set of size two. Hence we may assume $G - \{u_1, u_2\}$ contains exactly two components containing vertices in $X$. Call these components $D_1$ and $D_2$ where $x_1 \in V(D_1)$ and $x_2 \in V(D_2)$.

Choose any two vertices $x, x' \in X - \{u_1, u_2\}$; then choose a third vertex $x'' \in X - \{u_1, u_2\}$ such that not all three of $x, x', x''$ are in the same component of $G - \{u_1, u_2\}$. Let $C$ be a cycle based on $A = \{x, x', x''\}$.

Since $\{u_1, u_2\}$ separates one of the vertices of $A$ from the others, $u_1, u_2 \in V(C)$; since $V(C) \cap X = A$ and neither $u_1$ nor $u_2$ is in $A$, we must have $u_1, u_2 \in Y$.

Moreover, $u_1$ must have an edge to either $x$ or $x'$ in $C$, and therefore in $G$. Since $x, x' \in X$ were arbitrary, $|N(u_1)| \ge |X|-1$. By Lemma~\ref{5c} applied to $G'$, $N_{G'}(u_1) = X$, and therefore $N(u_1) = X$. By symmetry, we also obtain $N(u_2) = X$.

Now suppose each component of $G - \{u_1, u_2\}$ has at least 2 vertices in $X$. For $i \in \{1,2\}$, set $X_i = X \cap D_i$. By the minimality of $G$, there exists a cycle $C_1$ of $G$ based on $X_1 \cup \{x_2\}$ and a cycle $C_2$ based on $X_2 \cup \{x_1\}$. Since $D_1$ and $D_2$ are separated by $\{u_1, u_2\}$, $N_{C_1}(x_2) = N_{C_2}(x_1) = \{u_1,u_2\}$. Therefore $(C_1 - \{x_2\}) \cup (C_2 - \{x_1\})$ is a cycle in $G$ which is based on  $X$, a contradiction.

Thus we may assume without loss of generality that $V(D_1) \cap X = \{x_1\}$. Note that this implies all neighbors of $x_1$ other than $u_1$ and $u_2$ have degree 1. Let $G_1$ be obtained from $G$ by deleting $x_1$ and all of its neighbors except for $u_1$.

We will show that $G_1$ is a counterexample that has fewer vertices than $G$. Set $X' = X - \{x_1\} = X \cap V(G_1)$. If there exists $A \subseteq X'$ with $|A| \geq 3$ such that $|\widehat{N}_{G_1}(A)| < |A|$, then in $G$, $\widehat{N}_{G}(A \cup \{x_1\}) = \widehat{N}_{G_1}(A) \cup \{u_2\} < |A \cup \{x_1\}|$, a contradiction.

Next, we will show that for all $A$ with $|A| \geq 3$, $G_1[A \cup \widehat{N}_{G_1}(A)]$ is 2-connected. Recall that $G_1 - \{u_1\} = D_2$. The subgraph of $D_2$ obtained by removing all vertices in $Y$ of degree 1 is still connected. Call this subgraph $H$. If $A = X'$, then $G_1[A \cup \widehat{N}_{G_1}(A)] = G_1[H \cup \{u_1\}]$. Since $H$ is connected and $u_1$ is adjacent to all vertices in $X'$, $G_1[H \cup \{u_1\}]$ is 2-connected. Now suppose $A \neq X'$. Then by the choice of $G$ as a minimum counterexample, there exists a cycle $C$ in $G$ with $V(C) \cap X = A \cup \{x_1\}$, where $N_C(x_1) = \{u_1, u_2\}$. In particular,  $P:=C - \{x_1, u_1, u_2\}$ is a path containing all vertices of $A$. In $G_1$, $G_1[A \cup \widehat{N}_{G_1}(A)]$ can be obtained from $P$ by adding $u_1$, which is adjacent to all of $V(P) \cap X$, and possibly adding some additional vertices in $Y$ with degree at least 2. Hence it is 2-connected.

Next, suppose  that it is super-cyclic. By the minimality of $G$, $G$ contains no cycle $C$ based on $X$; however, because $G_1$ is super-cyclic, we may find a cycle $C' = v_1v_2 \ldots v_{2|X'|}v_1$ in $G_1$ (and therefore in $G$) based on $X - \{x_1\}$ such that $u_2 \notin V(C')$. If $u_1 \notin V(C')$, then we may replace in $C'$ any segment $v_i v_{i+1} v_{i+2}$ (for $v_i \in X$) with the path $v_iu_1x_1 u_2 v_{i+2}$ to obtain a contradiction. Otherwise, if $u_1 = v_i$ for some $i$, we replace the path $v_{i-1} u_1 v_{i+2}$ with $v_{i-1} u_1 x u_2 v_{i+2}$. 

Finally, we have $|Y \cap G_1| \leq |Y| - (\delta - 1) \leq (4\delta - 10) - (\delta - 1) \leq 4 (\delta-1) - 10$. The last inequality holds because we may assume that $|X| \ge 7$ and therefore $\delta \ge 7$, since Theorem~\ref{Xk} implies Theorem~\ref{mainpan3} for $|X| \le 6$. This shows that $G_1$ is a counterexample for Theorem~\ref{mainpan3} (with $\delta' = \delta-1$) which has fewer vertices than $G$, contradicting the choice of $G$.
\end{proof}

\subsection[Proof of Theorem 1.8]{Proof of Theorem~\ref{mainpan3}}

\begin{proof}[Proof of Theorem~\ref{mainpan3}]
As in the previous subsection, suppose for the sake of contradiction that $G$ is a vertex-minimum, edge-maximal counterexample to Theorem~\ref{mainpan3}. By the choice of $G$, for each $x \in X$, there exists some cycle $C$ with $V(C) \cap X = X - \{x\}$. We may also assume that $|X| \ge 7$ and therefore $\delta \ge 7$, since Theorem~\ref{Xk} implies Theorem~\ref{mainpan3} for $|X| \le 6$.

Letting $G' = G[X \cup \widehat{N}(X)]$, it follows from our choice of $G$ that $G'$ is critical and saturated.

If there exists a pair $(x, C)$ with an $x,C$-fan $F$ of size at least $4$, then choose such a triple which maximizes $t = |V(F) \cap V(C)|$, and subject to this, minimizes $|V(F)|$. Let $T = V(F) \cap V(C)$. By Lemmas~\ref{cross0} and~\ref{cros111}, no two vertices in $X^+(T) \cup \{x\}$ share a neighbor outside of $V(C)$, and no two vertices in $X^+(T)$ cross more than one time. By Lemma~\ref{cros12}, for each pair $x_1, x_2 \in X^+(T)$, $d_C(x_1) + d_C(x_2) \leq |V(C) \cap Y| + 3 = |X|+2$.
 Therefore 
\begin{eqnarray*}
|Y|  & \geq & |V(C)\cap Y| + \sum_{w \in X^+(T) \cup \{x\}}d_{Y - V(C)}(w)\\&\geq & |X| - 1 + \delta (t+1) - \sum_{w \in X^+(T) \cup \{x\}} d_C(w) \\
& \geq & |X| - 1 + \delta (t+1) - t - t(|X|+2)/2.
\end{eqnarray*}Since the coefficient at $t$ is at least $\delta - 1 - (\delta + 2)/2 > 0$ (assuming, as we do, that $\delta > 4$), this quantity is minimized whenever $t$ is minimized, i.e., $t = 4$. We obtain $|Y| \geq |X| - 1 + 5\delta - 4 - 2(|X|+ 2)$, which is minimized when $|X| = \delta$. So $|Y| \geq 4\delta - 9$, a contradiction.

Now suppose that for all $x \in X$ and cycles $C$ with $V(C) \cap X = X - \{x\}$, the largest $x, C$-fan has size at most 3. Choose $x \in X$ with the maximum number of neighbors of degree at least 2. If every $x \in X$ has at most 3 neighbors of degree at least 2 (and at least $\delta - 3$ neighbors of degree 1), then we have
$|Y| \geq |X|(\delta - 3)+3$; since $|X| \geq 4$, $|Y| \geq 4\delta - 9$.

Therefore $x$ has at least 4 neighbors of degree at least 2. Let $F$ be a maximum $x, C$-fan of $G$ and set $T = F \cap V(C)$. By Lemma~\ref{3con}, $|T| \geq 3$, since $x$ cannot be separated from $X - x'$ by a set of size 2. So $|T|=3$.

By Lemma~\ref{two-neighbors}, $|T \cap Y|\geq 2$ (we apply this lemma to $G'$, but the conclusion carries over to $G$). If $|T \cap Y| = 3$, then since $x$ has at least 4 neighbors of degree at least 2, there exists $y \in N(x) - V(C)$. Since all vertices in $X - \{x\}$ are contained in $C$, $y$ has a neighbor $x' \in C$. But then $F \cup xyx'$ is a fan of size 4, a contradiction.

Finally, we may assume $T \cap V(C) = \{x_1, y_1, y_2\}$, where $x$ and $x_1$ have at least $2$ common neighbors outside $C$. In particular, $\{y_1, y_2\} \subset N(x)$, and for any $x' \neq x,x_1$, we have $N(x) \cap N(x') \subseteq \{y_1,y_2\}$, otherwise we could find a larger $x, C$-fan.  We will show that $N(y_1) = N(y_2) = X$. If there exists $x', x'' \in X - \{x_1\}$ such that $x'y_i, x''y_i \notin E(G)$ for some $i \in \{1,2\}$, then there cannot exist a 6-cycle based on $\{x, x', x''\}$, a contradiction. Hence $|N(y)| \geq |X|-2$ which implies $N(y) = X$ by Lemma~\ref{-2} (again, we apply this lemma to $G'$, but the conclusion carries over to $G$).

Consider $y \in \widehat{N}(x) -\{y_1,y_2\}$. Since there is no $x,C_x$-fan of size 4, $N(y) \subseteq T \cup \{x\}$. That is, $N(y) = \{x, x_1\}$ and so $\widehat{N}(x) \subseteq N(x_1)$. By the choice of $x$, $N_C(x_1) = \{y_1,y_2\}$ since otherwise $|\widehat{N}(x_1)| > |\widehat{N}(x)|$. But then $\{y_1,y_2\}$ separates $\{x,x_1\}$ from the rest of the vertices in $X$, contradicting  Lemma~\ref{3con}.
\end{proof}

\end{document}